\newcommand{\R}{\mathds R}
\newcommand{\N}{\mathds N}
\newcommand{\I}{\mathds 1}
\def\aa{\alpha}
\def\dd{\delta}
\def\d{{\rm d}}
\def\<{\langle}
\def\>{\rangle}
 \def\tt{\tilde}
 \def\ff{\frac}
 \def\ss{\sqrt}
\def\bb{\beta}
\def\R{\mathbb R}  \def\ff{\frac} \def\ss{\sqrt} 
\def\N{\mathbb N} \def\kk{\kappa} 
\def\dd{\delta}  \def\vv{\varepsilon} 
\def\<{\langle} \def\>{\rangle}  
  \def\nn{\nabla}  
\def\d{\text{\rm{d}}} \def\bb{\beta} \def\aa{\alpha} 
  \def\si{\sigma} 
 \def\beq{\begin{equation}}  
\def\e{\text{\rm{e}}}  \def\OO{\Omega}  
 \def\tt{\tilde} 
 \def\P{\mathbb P}
  \def\ll{\lambda}
\def\E{\mathbb E}
\def\to{\rightarrow}
\def\8{\infty}\def\3{\triangle}
\def\W{\mathbb{W}}\def\1{\lesssim}
\renewcommand{\bar}{\overline}
\renewcommand{\hat}{\widehat}
\renewcommand{\tilde}{\widetilde}
\newtheorem{theorem}{Theorem}[section]
\newtheorem{lemma}[theorem]{Lemma}
\theoremstyle{definition}
\newtheorem{remark}[theorem]{Remark}
\numberwithin{equation}{section}
\begin{document}

\title[$L^2$-Wasserstein   contraction of modified Euler schemes] {$L^2$-Wasserstein   contraction of modified Euler schemes for SDEs with high diffusivity and applications}

\author{
Jianhai Bao\qquad
Jiaqing Hao}
\date{}
\thanks{\emph{J.\ Bao:} Center for Applied Mathematics, Tianjin University, 300072  Tianjin, P.R. China. \url{jianhaibao@tju.edu.cn}}

\thanks{\emph{J.\ Hao:}
Center for Applied Mathematics, Tianjin University, 300072  Tianjin, P.R. China. \url{hjq_0227@tju.edu.cn}}

\allowdisplaybreaks

\maketitle

\begin{abstract}
 In this paper,
 we are concerned with a modified  Euler scheme for the   SDE   under consideration, where the drift is  of super-linear growth and dissipative merely outside a closed ball.
 By adopting the synchronous coupling, along with the construction  of an   equivalent
 quasi-metric, the $L^2$-Wasserstein contraction  of the modified Euler scheme is addressed provided that the diffusivity is large enough. In particular, as a direct  application, the $L^2$-Wasserstein contraction  of the projected (truncated) Euler scheme and the tamed Euler algorithm is explored under much more explicit conditions imposed on drifts.
The theory derived on the
 $L^2$-Wasserstein contraction  related to  the modified EM scheme has numerous applications. In addition to applications on Poincar\'{e} inequalities (with respect to   the numerical  transition kernel and the numerical invariant probability measure), concentration inequalities for empirical averages, and non-asymptotic convergence  bounds in  the KL-divergence, in this paper we present another two potential applications. One concerns the  non-asymptotic $L^2$-Wasserstein bounds
associated with  the classical Euler scheme, the projected Euler scheme and the tamed Euler recursion, respectively, which further implies the $L^2$-Wasserstein  error bounds between the exact invariant probability measures and the numerical counterparts.
  It is worthy to emphasize that the associated convergence rate is improved greatly  in contrast to the existing literature. Another application is devoted to the  strong law of large numbers of additive functionals corresponding  to the modified Euler algorithm, where the observable functions involved  are allowed to be of polynomial growth,
 and the associated convergence rate is also enhanced remarkably. Particularly,
 the strong law of large numbers for the classical Euler scheme, the projected Euler scheme, and the tamed Euler recursion are treated  simultaneously.

\medskip

\noindent\textbf{Keywords:} $L^2$-Wasserstein contraction, modified Euler scheme, projected Euler scheme,  tamed Euler scheme, strong law of large numbers, super-linearity

\smallskip

\noindent \textbf{MSC 2020:} 60J60, 60J05, 60H35, 37M25
\end{abstract}
\section{Background, main results  and applications}
\subsection{Background}
Over the past decade, via the probabilistic approaches (e.g., coupling methods, Harris' theorems, and functional inequalities), the
  ergodicity of SDEs   has  advanced considerably in various scenarios. As far as SDEs under consideration are concerned, where the driven noises are Brownian motions,
 we refer to e.g.  \cite{Eberle16} for non-degenerate SDEs, \cite{EGZ} concerned with  stochastic Hamiltonian systems, \cite{Schuh} regarding kinetic Langevin dynamics with distribution-dependent forces, and \cite{Wang23} with regard to non-dissipative McKean-Vlasov SDEs, to name just a few. Meanwhile, in the past few years, the ergodicity of SDEs driven by pure jump processes  has also achieved  some  progresses. In \cite{LW},
 the so-called refined basic coupling approach was  proposed creatively to investigate the ergodicity for L\'{e}vy-driven SDEs, where the L\'{e}vy noise need not to be rotationally invariant.  Subsequently, the refined basic coupling method has been applied successfully to handle ergodicity in other setups; see e.g. \cite{BW,LMW,LWZ} for
 stochastic Hamiltonian systems and McKean-Vlasov SDEs driven by  L\'{e}vy noise.
 In the aforementioned literature, the ergodicity for the underlying stochastic systems was established under the $L^1$-Wasserstein distance, or the additive type Wasserstein distance, or the multiplicative type quasi-Wasserstein distance. In particular, via a reflection coupling trick, the weak  $L^1$-Wasserstein contraction was tackled in \cite{Eberle16} for SDEs, where the drifts are dissipative in the long distance. Later on,
 the $L^p$-Wasserstein ($p>1$) decay (rather than the weak $L^p$-Wasserstein contraction) was explored in \cite{LWb} for SDEs with partially dissipative drifts.  Among ergodicity under the $L^p$-Wasserstein distance for SDEs with non-uniformly dissipative drifts,
 the weak $L^2$-Wasserstein contraction plays a distinctive role since it links closely to, for instance,  functional inequalities and the KL-divergence as mentioned in
\cite{LMM,PM}. Recently, by means of the synchronous coupling,
 the weak $L^2$-Wasserstein contraction was  discussed  in \cite{LMM}   at the price of high diffusivity. In addition,
  the $L^2$-Wasserstein convergence to the equilibrium (which  is indeed a special $L^2$-Wasserstein contraction) was studied in \cite{Wang20} in case   the unique invariant probability measure (IPM for short) of the SDE under consideration satisfies the log-Sobolev inequality.

As one of the challenges encountered in statistics, machine learning, and data science, sampling from a target distribution with known parameters is essential. Particularly,  a novel sampling method for known distributions and a new algorithm for diffusion generative models were introduced in \cite{Zhang}. Another  potential way   for generating  samples from known distributions (i.e., the target distributions) is to manipulate an appropriate numerical algorithm to discretize (in time) an associated SDE, where the   unique IPM is the target distribution. Nowadays,
with the rapid development  on the ergodicity of continuous-time stochastic systems (as mentioned previously in the previous paragraph), the long-time analysis of  stochastic algorithms   has also gained   much more attention (see e.g. \cite{DD,DEEGM,MSH,Suzuki}, to list only a few) and moreover built  theoretical evidences for sampling from a target  distribution. Lately,  the issue on non-asymptotic bounds under the (weighted) total variation distance and the (quasi-)Wasserstein distance
 for sampling algorithms in the non-convex setting is brought into sharp focus; see e.g. \cite{BDMS,DAS,MMS,NMZ} and references therein.

 Specially,    lower bounds on contraction rates for Markov chains on general state spaces were derived in \cite{EM}, and then, as an application, the   $L^1$-Wasserstein contraction was investigated for the  Euler discretization of SDEs with non-globally contractive drifts. Soon afterwards, \cite{EM} was extended in \cite{HMW} to SDEs with general noise  (including    Brownian motions and  rotationally invariant stable processes). Recently, via a carefully tailored distance function and an appropriate coupling, the $L^1$-Wasserstein contraction  was investigated in \cite{SW} for three kinetic Langevin samplers with non-convex potentials.

 As stated in \cite{LMM}, the  $L^2$-Wasserstein contraction for stochastic algorithms  is vital due to the fact that it relates closely to the Poincar\'{e}  inequalities, concentration inequalities, entropy-cost regularization inequalities and non-asymptotic convergence bounds in the KL-divergence and the total variation. In particular, the  $L^2$-Wasserstein contraction for Euler schemes of elliptic diffusions and interacting particle systems was considered in \cite{LMM}. Recall that the classical Euler scheme is unstable once the drifts of SDEs under investigation are of super-linear growth; see, for example, \cite{HJK,MSH}. So,
no matter what \cite{EM}, \cite{HMW}, \cite{LMM}, and \cite{SW},  the Euler scheme was applied to discretize the underlying SDEs, where
 the  drifts  are supposed to be Lipschitz continuous (so it is of linear growth at most). Undoubtedly, such a strong restriction on  the growth of drifts
reduces applicability of the theory found in \cite{EM,HMW,LMM,SW}.

Admittedly, in the past few years,   theoretical foundations for non-asymptotic bounds under the $L^1$-Wasserstein distance and the (weighted) total variation distance for sampling algorithms are relatively well-established. Meanwhile, the long-time analysis of the tamed type numerical schemes
associated with SDEs with super-linearity has advanced greatly.  Specially,  most of the literature focuses on the non-asymptotic convergence bounds, which are based on the weak contraction of the underlying exact solutions;  see e.g.  \cite{BDMS,DAS,NMZ}. Nevertheless, the topic on the weak contraction under the Wasserstein distance of the associated tamed schemes is left open  in  \cite{BDMS,DAS,NMZ}.
Inspired by the references mentioned previously,  in this paper we shall go     beyond \cite{BDMS,DAS,LMM,NMZ}  and aim to  address the weak $L^2$-Wasserstein contraction for a modified Euler scheme (including the truncated/projected Euler scheme and the tamed Euler recursion as typical candidates) associated with a range of SDEs with super-linear drifts. Most importantly,  with the help of the theory concerning
the weak $L^2$-Wasserstein contraction,
we attempt to pave the way for tackling the  non-asymptotic $L^2$-Wasserstein   bounds
for Langevin sampling algorithms in a non-convex setting, where the potential involved  might be   of super-linear growth. Additionally,  the  improvement   on the $L^2$-Wasserstein convergence rate  with respect to the step size is another goal we want to seek. The above   accounts
can be regarded as the principal inspirations  impelling us to carry out the present  topic.

\subsection{Weak $L^2$-Wasserstein contraction}\label{sub1}
More precisely, in this paper we work on   an SDE in the form:
\begin{align}\label{E1}
\d X_t=b(X_t)\,\d t+\si\d W_t,
\end{align}
where  $\si\in\R$,   $(W_t)_{t\ge0}$ is a  $d$-dimensional Brownian motion  on a probability basis (i.e., a complete filtered probability space) $(\OO,\mathscr F,(\mathscr F_t)_{t\ge0},\P)$, and
$b:\R^d\to\R^d$ is measurable satisfying that
\begin{itemize}
 \item[(${\bf A}_0$)]
 there exist constants $L_0 >0$ and $\ell_0\ge0$ such that
 \begin{align*}
 |b(x)-b(y)|\le L_0\big(1+|x|^{\ell_0}+|y|^{\ell_0}\big)	|x-y|, \quad x,y\in\R^d.
 \end{align*}
 \end{itemize}
 Under $({\bf A}_0)$, the SDE \eqref{E1}  has a unique maximal local solution up to  life time.
 Throughout the paper, we shall assume that the SDE \eqref{E1} is strongly well-posed under appropriate conditions (e.g., (${\bf A}_0$) along
 with  a Lyapunov condition).

As we know, when the drift term $b$ is of linear growth, one of the simple ways of approximating the SDE \eqref{E1} is
the Euler-Maruyama (EM for short) scheme: for $\delta>0$ and integer $n\ge0,$
\begin{align}\label{WW}
X_{(n+1)\delta}^\delta=X_{ n \delta}^\delta+b(X_{n\delta}^\delta)+\si\triangle W_{n\delta},
\end{align}
where  $\triangle W_{n\delta}:=W_{(n+1)\delta}-W_{n\delta}$ (i.e.,  the increment of $(W_t)_{t\ge0}$ on the interval $[(n+1)\delta,n\delta]$). Nevertheless, the EM scheme \eqref{WW} no longer  works in case   $b$ is of super-linear growth. From this viewpoint, other numerical approximation candidates  have been proposed to tackle the difficulty arising from  the super-linearity of drifts. As a typical  candidate, the   tamed EM (TEM for abbreviation) scheme (see e.g. \cite{BDMS,HJK}): for $\delta>0,$ $n\ge0$ and $\gamma\in(0,1/2),$
\begin{align}\label{EW0-}
X_{(n+1)\delta}^{\delta,\gamma }= X_{n\delta}^{\delta,\gamma } + b^{(\delta)}(X_{n\delta}^{\delta,\gamma })\delta +\si\triangle W_{n\delta}
\end{align}
can be applied to discretize the SDE \eqref{E1}, where for $\ell_0\ge0 $ given in (${\bf A}_0$),
\begin{align}\label{EY-}
 b^{(\delta)}(x)=\frac{b(x) }{1+\delta^\gamma| x |^{\ell_0}},\quad x\in\R^d.
\end{align}
 On the other hand, inspired by e.g. \cite{BIK,Mao}, the SDE \eqref{E1} can also be approximated by the projected EM (PEM for short) scheme:  for $\delta>0,$ $n\ge0$ and $\gamma\in(0,1/2),$
\begin{align}\label{EWW}
X_{(n+1)\delta}^{\delta,\gamma}= \pi^{(\delta)}(X_{n\delta}^{\delta,\gamma}) +b ( \pi^{(\delta)}(X_{n\delta}^{\delta,\gamma}) )\delta+\si\triangle W_{n\delta}.
\end{align}
Herein, the truncated map $\pi^{(\delta)}$ is defined as below:
\begin{align}\label{ET-}
\pi^{(\delta)}(x)=\frac{1}{|x|}(|x|\wedge\varphi^{-1}(\delta^{-\gamma}))x\I_{\{|x|\neq 0\}}, \quad x\in\R^d,
\end{align}
where $[0,\8)\ni r\mapsto \varphi^{-1}(r)$ denotes the inverse function of  $\varphi(r):=1+r^{\ell_0}, r\ge0$.

In the present work,
aiming at fitting the schemes  \eqref{WW}, \eqref{EW0-} and \eqref{EWW} into a unified framework, we   put forward the following modified Euler recursion associated with \eqref{E1}: for   $\delta>0$ and  $n\ge0,$
\begin{align}\label{E2}
X_{(n+1)\delta}^\delta=\pi^{(\delta)}(X_{n\delta}^\delta)+b^{(\delta)}(\pi^{(\delta)}(X_{n\delta}^\delta))\delta+\si\triangle W_{n\delta},
\end{align}
  where $ b^{(\dd)}(x)$ and $\pi^{(\dd)}(x)$ are modified   versions of $b(x)$ and $ x$, respectively, (i.e.,  for each fixed $x\in\R^d$, $|b^{(\delta)}(x)-b(x)|\to0$ and $\big|\pi^{(\dd)}(x)-x\big|\to 0$ as $\delta\to0$), and
\begin{enumerate}
\item[$({\bf A}_1)$]the mapping $\pi^{(\delta)}:\R^d\to\R^d$ with $\pi^{(\delta)}({\bf0})={\bf0}$
is contractive, i.e., $$|\pi^{(\delta)}(x)-\pi^{(\delta)}(y)|\le |x-y|, \quad x,y\in\R^d.$$
\end{enumerate}

Below, let  ${\rm Id}$   be the identity map, i.e., ${\rm Id}(x)=x,x\in\R^d $.
Obviously,
by taking (i) $\pi^{(\delta)} ={\rm Id}$  and $b^{(\delta)} =b $, (ii) $\pi^{(\delta)} ={\rm Id}$ and $b^{(\delta)} $ introduced in \eqref{EY-}, as well as  (iii) $b^{(\delta)} =b $
and $\pi^{(\delta)} $ defined in \eqref{ET-}, the iteration  \eqref{E2} goes  back to the schemes \eqref{WW}, \eqref{EW0-} and \eqref{EWW}, respectively.  For other variants of \eqref{E2}, we would like to allude to e.g.  \cite{BDMS,DD,DEEGM}. In detail, \cite{BDMS} provided a
prototype of the   recursion \eqref{E2}   with $\pi^{(\delta)} ={\rm Id}$.  Hereafter,   \cite{DEEGM} investigated the   successive iteration of   functional autoregressive processes with the isotropic Gaussian noise of the form:
\begin{align*}
  X_{(n+1)\delta}^\delta=b^{(\delta)}(X_{n\delta}^\delta)+\si \ss{\delta}Z_{(n+1)\delta},
\end{align*}
where $b^{(\delta)}:\R^d\to\R^d$ is continuous satisfying \cite[({\bf H1})]{DEEGM}, and  $(Z_{n\delta})_{n\ge1}$ is a sequence of i.i.d. $d$-dimensional standard Gaussian random
variables. As a special case,  in \cite{DEEGM} the authors   considered  the EM scheme  with $b^{(\delta)}(x):=x+b(x)\delta$  for a   {\it  Lipschitz continuous} $b:\R^d\to\R^d$. Furthermore,
\cite{DD} tackled     a much more general version of \eqref{E2}; see in particular \cite[(19)]{DD} for more details.

As we know very well, the algorithms \eqref{WW}, \eqref{EW0-} and \eqref{EWW} are stable (i.e., they have finite moment in a finite horizon as the associated  exact solutions do)
 under certain conditions, respectively; see, for instance, \cite{BIK,HJK} for related details. Indeed, under some verifiable conditions (see in particular (${\bf H}_0)$ in Section \ref{sec2}), we can also  show that the scheme \eqref{E2} possesses  finite second-order moment in a finite horizon
 so it can preserve the corresponding stability; see Lemma \ref{nonexplosition} in Section \ref{sec2} for   details.

   To proceed, we introduce some  notations.
Denote  $\mathscr L_\xi$ by  the distribution of the random variable $\xi,$ and   $\mathscr P(\R^d)$ (resp. $\mathscr P_2(\R^d)$) refers to  the space of probability measures (resp. with finite second moment) on $\R^d.$
If   $\mathscr L_{X_0^\delta}=\mu\in\mathscr P(\R^d)$,
 we  write  $(X_{n\delta}^{\delta,\mu})_{n\ge0}$ instead of $(X^\dd_{n\delta})_{n\ge0}$ determined by \eqref{E2} to stress  the  initial distribution $\mu$. On some occasions,  in case of $\mu=\delta_x$ (i.e., Dirac's delta measure  centered at the point $x\in\R^d$), we  write $(X_{n\delta}^{\delta,x})_{n\ge0}$   in lieu of
 $(X_{n\delta}^{\delta,\delta_x})_{n\ge0}$ if there is no confusion occurred.
For $\mu\in\mathscr P(\R^d)$, we set $\mu P^{(\delta)}_{n\delta}:=\mathscr L_{X_{n\delta}^{\delta,\mu}}$ to simplify  the notation. $\mathbb W_2$ stands for the $L^2$-Wasserstein distance on $\mathscr P_2(\R^d)$, which is defined as below:
\begin{align*}
\mathbb W_2(\mu,\nu):=\inf_{\pi\in\mathscr C(\mu,\nu)}\bigg(\int_{\R^d\times\R^d}|x-y|^2\pi(\d x,\d y)\bigg)^{\frac{1}{2}},\quad \mu,\nu\in\mathscr P_2(\R^d),
\end{align*}
where $\mathscr C(\mu,\nu)$ means the collection of couplings   between $\mu$ and $\nu.$

Once we choose   the modified EM scheme \eqref{E2} to  state the main result  concerned with the  weak $L^2$-Wasserstein contraction,
there are  plenty of  cumbersome and inexplicit conditions imposed on $b^{(\delta)}$ (rather than $b$); see Theorem \ref{thm}  in Section \ref{sec2}
for more details. So, for the sake of clarity and readability,    we prefer to  opt for some competitive  numerical schemes to present the main result in an explicit and  succinct way.
As for the EM scheme \eqref{WW},   the weak $L^2$-Wasserstein contraction has been  investigated  in depth in \cite{LMM},
where   the drift $b$   is Lipschitz continuous. In this work, we are still interested in the same issue
 but focus on the TEM  scheme \eqref{EW0-} and the PEM scheme  \eqref{EWW}, in which
the drift term $b$ under consideration  is allowed to be   super-linear  and  non-convex in some scenarios (e.g., $b(x)=-\nn U(x)$ for a smooth potential $U:\R^d\to\R$).  To this end, some explicit technical conditions on $b$ are necessary to
 be imposed.

Concerning the iteration \eqref{EW0-}, we assume that
\begin{enumerate}
\item[$({\bf A}_{2 })$]there exists a constant $L_{1} >0$
such that for all $x,y\in\R^d,$
\begin{align}\label{EE10-}
\big|b(x)|y|^{\ell_0}-b(y)|x|^{\ell_0}\big|\le L_{1}\big(1+ |x|^{\ell_0} + |y|^{\ell_0}+ |x|^{\ell_0} |y|^{\ell_0}\big)|x-y|;
\end{align}

\item[$({\bf A}_{3 })$]there exist constants $L_{2},L_{3},L_{4}>0$ and $R^*\ge0$ such that  for all $x,y\in\R^d$ with $|x|>R^*$  or $|y|>R^*$,
\begin{align}\label{EE11-}
\<x-y,b(x)-b(y)\>\le -L_{2}\big(1+|x|^{\ell_0}+|y|^{\ell_0}\big)|x-y|^2,
\end{align}
and
\begin{align}\label{EE12-}
\big\<x-y, b(x)|y|^{\ell_0}-b(y)|x|^{\ell_0}\big\>\le \big(L_{3}\big(1+|x|^{\ell_0}+|y|^{\ell_0}\big)-L_{4} |x|^{\ell_0}|y|^{\ell_0}\big) |x-y|^2.
\end{align}
\end{enumerate}

\ \

As far as the PEM scheme \eqref{EWW} is concerned, we suppose that
 \begin{enumerate}
\item[$({\bf A}_4)$] there exist constants $R_*, L_5=L_5(R_*)>0$ such that for all $x,y\in\R^d$ with $|x|>R_* $  or $|y|>R_*$,
\begin{align}\label{B4}
\<x-y,b(x)-b(y)\>\le  - L_5 |x-y|^2.
\end{align}

\end{enumerate}

The first main result in this work  is described  as follows.

\begin{theorem}\label{thm-0}
	Assume  $(i)$ $({\bf A}_0)$, $({\bf A}_{2})$ and $({\bf A}_{3})$ for the TEM scheme \eqref{EW0-};  $(ii)$ $({\bf A}_0)$ and $({\bf A}_4)$ for the PEM scheme \eqref{EWW}. Then, there exist constants $C_0,\si_0,\ll,\dd_1^\star>0$  such that for $\delta\in(0,\delta_1^\star]$,  $n\ge0$,   $\mu,\nu\in\mathscr P_2(\R^d)$, and the noise intensity $\si$   satisfying
 $|\si|\ge \si_0$,
\begin{align}\label{RT*1}
\mathbb W_2\big(\mu P^{(\delta)}_{n\delta},\nu P^{(\delta)}_{n\delta}\big)\le C_0\e^{-\lambda n\delta}\mathbb W_2(\mu,\nu).
\end{align}
\end{theorem}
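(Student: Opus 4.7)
The plan is to reduce Theorem \ref{thm-0} to the abstract $L^2$-Wasserstein contraction result for the modified Euler scheme \eqref{E2} that is stated as Theorem \ref{thm} in Section \ref{sec2}. Both the TEM scheme \eqref{EW0-} and the PEM scheme \eqref{EWW} already fit the framework \eqref{E2}: for TEM I take $\pi^{(\delta)}=\mathrm{Id}$ together with $b^{(\delta)}$ given by \eqref{EY-}, and for PEM I take $b^{(\delta)}=b$ together with the radial truncation $\pi^{(\delta)}$ defined in \eqref{ET-}. In both cases $({\bf A}_1)$ is immediate: the identity is trivially $1$-Lipschitz and fixes the origin, while \eqref{ET-} is the standard $1$-Lipschitz radial projection onto the ball of radius $\varphi^{-1}(\delta^{-\gamma})$. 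What remains is to verify the (more technical) hypotheses that Theorem \ref{thm} imposes on $b^{(\delta)}$ — essentially polynomial regularity together with dissipativity outside a ball for the \emph{tamed} or \emph{composite} drift — using only the natural hypotheses $({\bf A}_0),({\bf A}_2),({\bf A}_3)$ (for TEM) and $({\bf A}_0),({\bf A}_4)$ (for PEM).

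For the TEM scheme the key computation is the algebraic identity
\begin{align*}
b^{(\delta)}(x)-b^{(\delta)}(y)=\frac{\bigl(b(x)-b(y)\bigr)+\delta^\gamma\bigl(b(x)|y|^{\ell_0}-b(y)|x|^{\ell_0}\bigr)}{\bigl(1+\delta^\gamma|x|^{\ell_0}\bigr)\bigl(1+\delta^\gamma|y|^{\ell_0}\bigr)}.
\end{align*}
Pairing with $x-y$, $({\bf A}_3)$ bounds each summand in the numerator; taking norms and using $({\bf A}_0),({\bf A}_2)$ yields both a polynomial-type Lipschitz estimate and a dissipative-outside-a-ball inequality for $b^{(\delta)}$. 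The delicate point is that the ``good'' term $-\delta^\gamma L_4|x|^{\ell_0}|y|^{\ell_0}|x-y|^2$ coming from \eqref{EE12-} must dominate the remaining ``bad'' contributions after division by $(1+\delta^\gamma|x|^{\ell_0})(1+\delta^\gamma|y|^{\ell_0})$; this is precisely where the structural split enforced by $({\bf A}_3)$ plays a role.

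For the PEM scheme the verification is more elementary: the image of $\pi^{(\delta)}$ lies in the ball of radius $\varphi^{-1}(\delta^{-\gamma})$, so $({\bf A}_0)$ yields a (step-size-dependent) Lipschitz bound for the composite drift $b\circ\pi^{(\delta)}$, while $({\bf A}_4)$ together with the $1$-Lipschitz continuity of $\pi^{(\delta)}$ transfers immediately into a dissipative-outside-a-ball estimate for $b\circ\pi^{(\delta)}$, at least once $|x|$ or $|y|$ is large enough that $|\pi^{(\delta)}(x)|>R_*$ or $|\pi^{(\delta)}(y)|>R_*$. Combined with the moment bound announced as Lemma \ref{nonexplosition}, this gives the abstract hypotheses of Theorem \ref{thm}.

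The main obstacle I expect is the bookkeeping in the TEM case: isolating the net dissipative contribution in the right-hand side of the display above requires tracking the interplay of all four constants $L_1,L_2,L_3,L_4$ with $\delta^\gamma$, and verifying that the threshold $\delta_1^\star$ can be chosen \emph{uniformly} so that the residual $\delta^\gamma$ corrections are absorbed on the dissipative side. Once this is done, Theorem \ref{thm} applied with $|\sigma|\ge \sigma_0$ directly delivers \eqref{RT*1} with constants $C_0,\sigma_0,\lambda,\delta_1^\star$ depending only on the parameters of $({\bf A}_0)$--$({\bf A}_4)$.
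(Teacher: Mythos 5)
Your overall route is exactly the paper's: both schemes are instances of the modified recursion \eqref{E2}, so one verifies the abstract hypotheses of Theorem \ref{thm} and applies it. Your verification of $({\bf A}_1)$ (Lemma \ref{lem0} in the paper), the quotient identity and the absorption of the $\delta^\gamma$ corrections for TEM, and the truncation-range/Lipschitz argument for PEM, all match Lemma \ref{lem1}.

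The gap is hypothesis $({\bf H}_2)$. Theorem \ref{thm} requires $({\bf A}_1)$, $({\bf H}_1)$, $({\bf H}_2)$, $({\bf H}_3)$. You verify $({\bf A}_1)$ and $({\bf H}_1)$, invoke $({\bf H}_3)$ via $|\sigma|\ge\sigma_0$, but then assert that ``the moment bound announced as Lemma \ref{nonexplosition}'' closes the argument. That lemma is a non-explosion estimate under $({\bf H}_0)$; it is not one of the hypotheses of Theorem \ref{thm} and does not substitute for $({\bf H}_2)$. Condition $({\bf H}_2)$ is a structural requirement on the truncation map (that $|\pi^{(\delta)}(x)|=|x|$ for $|x|\le r$ and $\inf_{|x|\ge r}|\pi^{(\delta)}(x)|\ge r$ once $\delta\le\delta_r^*$) together with the growth bound \eqref{EY*} on $b^{(\delta)}\circ\pi^{(\delta)}$ via the functions $\psi_1,\psi_2$. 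It feeds directly into the Lyapunov estimate of Lemma \ref{pro}, which constructs the weight $V$ in the auxiliary quasi-metric $\rho(x,y)=|x-y|^2(\sigma^2+V(x)+V(y))$; without $({\bf H}_2)$ the engine of Theorem \ref{thm} does not run. The paper supplies this as Lemma \ref{lem2}, and it must be verified separately for $\pi^{(\delta)}=\mathrm{Id}$ with $b^{(\delta)}$ from \eqref{EY-}, and for $b^{(\delta)}=b$ with $\pi^{(\delta)}$ from \eqref{ET-}; both checks are short (one uses $\varphi^{-1}(\delta^{-\gamma})\ge r$ for $\delta\le\varphi(r)^{-1/\gamma}$, the other the monotonicity of $r\mapsto(1+r)/(1+\delta^\gamma r)$), but they are not automatic and your reference to Lemma \ref{nonexplosition} does not cover them.
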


Hereinafter, we make some interpretations  on Theorem \ref{thm-0} and the associated assumptions.

\begin{remark}

To state Theorem \ref{thm-0} in an elegant manner,   we therein   don't give the explicit expressions of the quantities $C_0,\lambda, \si_0,\delta_1^\star$ since they  are a little bit lengthy.  However, for completeness, we herein provide their associated concrete forms.
For the TEM scheme \eqref{EW0-}, $\lambda,C_0$ and $\si_0$ are written explicitly  as below:
\begin{equation}\label{DD}
\begin{split}
\lambda  &=\frac{1}{2}\bigg(   (K_R^*/4 )\wedge \Big(6C_R\si^2\Big(\si^2+\frac{24C_R(K_R^*+96 C_R)(1+R)^2}{dK_R^*}\Big)^{-1}-3C_R\Big)\bigg),\\
C_0&=\bigg(1+\frac{24C_R(K_R^*+96 C_R)(1+R)^2}{dK_R^*\si^2}\bigg)^{\ff12},\\
\si_0&=\bigg(\frac{8(1+ R)}{dK_R^*}\big( (3C_R(K_R^*+96 C_R)(1+R))   \vee     (\psi_1( R)K_R^*)  \vee(12C_R  \psi_1(R_0) )  \big)\bigg)^{\ff12} ,
\end{split}
\end{equation}
where
$
R:=R^*, C_R:= (L_0+ L_1 )(1+R^{\ell_0})^2,    K_R^*:=(L_{2 }/2)\wedge L_{4 },
$
and $R_0 : =2\big((1+  R)(1+ 96 C_R/{K_R^*})+  \psi_1(1)\big)$ with $\psi_1(r):=L_0 (r^{\ell_0}+1)r+|b({\bf 0})|,r\ge0$.
On the other hand, with regarding to the PEM scheme \eqref{EWW},  the parameters  $\lambda, C_0$  and $\si_0 $ can also be given concretely via \eqref{DD} with  the constants $R,C_R, K_R^*$  being replaced respectively by the following ones:
$
R=R_*,  C_R=2L_0\varphi(R)$ and $    K_R^*=L_5.
$
In addition, for the TEM scheme \eqref{EW0-} and the PEM scheme \eqref{EWW},
 the quantity  $\delta_1^\star$ is also  given explicitly  in \eqref{dd1} with $ \theta=\gamma, \psi_2(r) \equiv L_0$ and respective   $   \delta_R=(L_{2}/(2L_{3}))^{\frac{1}{\gamma}}\wedge1,\delta_r^*\equiv1$ and $  \delta_R=\varphi(R)^{-\frac{1}{\gamma}},\delta_r^*=\varphi(r)^{-\frac{1}{\gamma}}.$

At first sight,  Assumptions $({\bf A}_{2})$ and $({\bf A}_{3})$ seem to be somewhat remarkable. Nevertheless, due to the structure of the TEM scheme \eqref{EW0-},
 they are imposed  quite naturally
when we examine Assumption (${\bf H}_1$) in Section \ref{sec2} for $\pi^{(\delta)}={\rm Id}$ and $b^{(\delta)}$ defined in \eqref{EY-}. $({\bf A}_{4})$ illustrates that $b$ is dissipative merely   outside of a closed ball. Additionally, $({\bf A}_{4})$, along with  $({\bf A}_{0})$, implies the validity of (${\bf H}_1$) with $b^{(\delta)}=b$ and $\pi^{(\delta)}$ given in \eqref{ET-}.
\end{remark}

Below, we shall present some potential applications of Theorem \ref{thm-0}. Since $\delta_xP_\delta^{(\delta)}$ is Gaussian,
it satisfies the Poincar\'{e} inequality (see, for instance, \cite[Proposition 4.1.1]{BGL}).  The aforementioned fact, together with the weak $L^2$-Wasserstein contraction \eqref{RT*1} (which implies the $L^2$-gradient estimate), yields that $(\delta_xP_{n\delta}^{(\delta)})_{n\ge1}$ and the IPM of $(X_{n\delta}^\delta)_{n\ge1}$
satisfy respectively  the Poincar\'{e} inequality; see, for example, \cite[Theorem 2.6]{LMM}. Additionally, $\delta_xP_\delta^{(\delta)}$, which is Gaussian as stated previously, satisfies a log-Sobolev inequality  \cite[Proposition 5.5.1]{BGL}. In turn, the Talagrand inequality (which is also called the transportation cost-information inequality in literature) is valid. Subsequently, the weak $L^2$-Wasserstein contraction (so the weak $L^1$-Wasserstein contraction is available) implies Gaussian concentration inequalities for empirical averages  \cite[Proposition 7.1]{LMM}.
For further applications concerning non-asymptotic convergence  bounds in  the KL-divergence, we would like to refer to \cite[Section $8$]{LMM} for more details.

In this paper, we shall  give an account of two additional applications of Theorem \ref{thm-0}, which include the non-asymptotic $L^2$-Wasserstein bounds between the exact IPMs and the numerical transition kernels, and the strong law of large numbers (LLN for short) of the additive functionals corresponding to the EM scheme, the TEM scheme as well as the PEM scheme, respectively.

\subsection{Application I: non-asymptotic $L^2$-Wasserstein bound}
Concerning the TEM scheme   associated with the Langevin SDE,
 \cite[Theorem 5]{BDMS} revealed that the convergence rate (with respect to the step size) of the non-asymptotic $L^2$-Wasserstein  bound
is $1/2$ under the {\it uniformly dissipative condition}. After that,  the uniform dissipation in \cite{BDMS} was weaken to the partially dissipative setting in \cite{DAS}. Nevertheless,   the corresponding convergence rate
 is $1/4$ (see \cite[Corollary 2.10]{DAS}), where an ingredient relies on that  the $L^2$-Wasserstein distance can be controlled by a quasi-Wasserstein distance
  (see \cite[Lemma A.3]{DAS}).

  In this subsection, our goal is to further improve e.g. \cite{BDMS,DAS}.
Besides $({\bf A}_0)$, $({\bf A}_2)$, $({\bf A}_3)$ as well as $({\bf A}_4)$, we additionally   assume that
\begin{enumerate}
 \item[$({\bf A}_5)$]
 there exist constants $  L_{6}, \ell_0^\star\ge 0$ such that for all $x,y\in \R^d$
\begin{align*}
\|\nn b(x)-\nn b(y)\|_{\rm op}	\le L_{6}\big(1+|x|^{\ell_0^\star}+|y|^{\ell_0^\star}\big)|x-y|,
\end{align*}
  where $\|\cdot\|_{\rm op}$ denotes the operator norm  and $\nn $ stands for the weak gradient operator.
\end{enumerate}

\begin{theorem}\label{cor}
Assume that $(i)$ $({\bf A}_0)$ with $\ell_0=0$, $({\bf A}_{4})$ and $({\bf A}_{5})$ with $\ell_0^\star=0$ for the EM scheme \eqref{WW};   $(ii)$ $({\bf A}_0)$, $({\bf A}_2)$ and $({\bf A}_{3})$ for the TEM scheme \eqref{EW0-};   $(iii)$ $({\bf A}_0)$, $({\bf A}_{4})$ and $({\bf A}_{5})$ for the PEM scheme \eqref{EWW}. Then, there exist constants $C_0,\si_0,\ll>0, \delta_2^{\star}\in(0,1]$ such that for all  $\dd\in(0,\dd_2^{\star}]$,   $\mu\in \mathscr P_2(\R^d)$, and the noise intensity $\si$ satisfying $|\si|\ge\si_0,$
\begin{align}\label{RT2}
\W_{2}\big(\pi_\8,\pi_\8^{(\delta)}\big)\le  C_0 \delta^{ \gamma_\star}  d^{\ell_\star},
\end{align}
and
\begin{align}
\W_2(\mu P^{(\dd)}_{n\dd},\pi_\8)\le C_0 \big(\e^{-\ll n\dd}	\W_2(\mu,\pi_\8)+\dd^{\gamma_\star}d^{\ell_\star}\big),
\end{align}
where $\pi_\8$ $($resp. $\pi_\8^{(\delta)}$$)$ is the unique IPM  of $(X_t)_{t\ge0}$ $($resp. $(X_{n\delta}^{\delta})_{n\ge0}$$)$, and
 $\gamma_\star=1,\ell_\star=1/2$  for the EM scheme \eqref{WW}; $\gamma_\star=\gamma, \ell_\star=\ell_0+1/2$ for the TEM scheme \eqref{EW0-}; $\gamma_\star=1,\ell_\star=(\ell_0+1+{\ell_0^\star}/2)\vee((\ell_0+1)/2+\ell_0/\gamma)$ for the PEM scheme \eqref{EWW}.
\end{theorem}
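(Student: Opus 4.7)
The strategy is to reduce the time-dependent bound \eqref{RT2} to a single-scale bias estimate $\W_2(\pi_\infty,\pi_\infty^{(\delta)})\le C_0\delta^{\gamma_\star}d^{\ell_\star}$ via \eqref{RT*1} and the triangle inequalities
\[
\W_2(\mu P^{(\delta)}_{n\delta},\pi_\infty)\le \W_2(\mu P^{(\delta)}_{n\delta},\pi_\infty^{(\delta)})+\W_2(\pi_\infty^{(\delta)},\pi_\infty),\qquad \W_2(\mu,\pi_\infty^{(\delta)})\le \W_2(\mu,\pi_\infty)+\W_2(\pi_\infty,\pi_\infty^{(\delta)}).
\]
Existence of the numerical IPM $\pi_\infty^{(\delta)}\in\mathscr P_2(\R^d)$ is a standard Banach consequence of \eqref{RT*1} on $(\mathscr P_2(\R^d),\W_2)$. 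For the bias, invariance of $\pi_\infty$ under the exact semigroup $\tilde P_{n\delta}$ and of $\pi_\infty^{(\delta)}$ under $P^{(\delta)}_{n\delta}$, together with \eqref{RT*1} for some $n_0$ with $C_0\e^{-\lambda n_0\delta}\le 1/2$, yields
\[
\W_2(\pi_\infty,\pi_\infty^{(\delta)})\le \W_2(\pi_\infty,\pi_\infty P^{(\delta)}_{n_0\delta})+\tfrac12 \W_2(\pi_\infty,\pi_\infty^{(\delta)}),
\]
whence $\W_2(\pi_\infty,\pi_\infty^{(\delta)})\le 2\W_2(\pi_\infty,\pi_\infty P^{(\delta)}_{n_0\delta})$. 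A synchronous coupling, with the exact SDE and the scheme \eqref{E2} driven by the same Brownian motion and the common initial draw $X_0\sim\pi_\infty$, then gives $\W_2^2(\pi_\infty,\pi_\infty P^{(\delta)}_{n_0\delta})\le \E|X_{n_0\delta}-X^{\delta}_{n_0\delta}|^2$, reducing everything to a uniform-in-$n$ mean-square error estimate.

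For this estimate, set $e_n = X_{n\delta}-X_{n\delta}^\delta$ and $\epsilon_n=\E|e_n|^2$. Subtracting the SDE and \eqref{E2} gives
\[
e_{n+1}=e_n+\delta\big[b(X_{n\delta})-b^{(\delta)}(\pi^{(\delta)}(X_{n\delta}^\delta))\big]+R_n,\qquad R_n:=\int_{n\delta}^{(n+1)\delta}\!(b(X_s)-b(X_{n\delta}))\,\d s.
\]
Applying It\^o's formula to $b(X_s)$ under $({\bf A}_5)$ splits $R_n=R_n^{\text{mart}}+R_n^{\text{drift}}$ with $\E|R_n^{\text{mart}}|^2=O(d\sigma^2\delta^3)$ (the factor $d$ coming from the trace in the It\^o isometry applied to $\nn b$ against the $d$-dimensional Brownian motion) and $|R_n^{\text{drift}}|=O(\delta^2)$ in $L^2$. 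Expanding $|e_{n+1}|^2$, extracting an effective contraction $\langle e_n,b(X_{n\delta})-b(X_{n\delta}^\delta)\rangle\lesssim -c|e_n|^2$ — which, since $({\bf A}_3)/({\bf A}_4)$ only give dissipativity outside a ball, must be imported from Theorem~\ref{thm-0} (equivalently, run the recursion in the quasi-metric from its proof and convert back to $\W_2$) — and absorbing the bias/cross terms through Young's inequality, I obtain
\[
\epsilon_{n+1}\le (1-c\delta)\epsilon_n + A(\delta,d),\qquad\text{so that}\qquad \sup_{n\ge 0}\epsilon_n\le A(\delta,d)/(c\delta).
\]

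The rate is then read off from the scheme-specific form of $A(\delta,d)$. For EM ($b^{(\delta)}=b$, $\pi^{(\delta)}=\mathrm{Id}$) the dominant contribution is $A\sim d\sigma^2\delta^3$, giving $\sqrt{\sup_n\epsilon_n}=O(\sqrt d\,\delta)$, hence $\gamma_\star=1,\ell_\star=1/2$. For TEM, the additional bias $\delta[b-b^{(\delta)}](X_{n\delta}^\delta)$ of pointwise size $\delta^{1+\gamma}|b(x)||x|^{\ell_0}$ drives $A\sim \delta^{1+2\gamma}\E|b(X^\delta)|^2|X^\delta|^{2\ell_0}\sim \delta^{1+2\gamma}d^{2\ell_0+1}$ via uniform polynomial moment bounds of $\pi_\infty^{(\delta)}$, producing $\sqrt{\sup_n\epsilon_n}=O(d^{\ell_0+1/2}\delta^\gamma)$. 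For PEM, the error combines the martingale contribution $d\sigma^2\delta^3$ with a term $\delta^2|\nn b(x)|^2|x-\pi^{(\delta)}(x)|^2$; bounding this via the sub-Gaussian tails of $\pi_\infty^{(\delta)}$ beyond the truncation threshold $\varphi^{-1}(\delta^{-\gamma})$ together with $({\bf A}_5)$ delivers $\gamma_\star=1$ with the stated $\ell_\star=(\ell_0+1+\ell_0^\star/2)\vee((\ell_0+1)/2+\ell_0/\gamma)$. The principal technical obstacle, common to all three schemes, is to propagate uniform-in-$n$, explicit $d$-dependent polynomial moment bounds for the iterates through the super-linear recursion; a secondary obstacle, already noted, is that dissipativity holds only outside a ball, which forces one to invoke Theorem~\ref{thm-0} and the large-diffusivity quasi-metric construction from Section~\ref{sec2} in place of a direct pointwise drift comparison.
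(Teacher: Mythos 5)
Your high-level strategy — triangle inequality plus \eqref{RT*1} to reduce the problem to a bias estimate $\W_2(\pi_\infty,\pi_\infty^{(\delta)})$, then $\W_2(\pi_\infty,\pi_\infty^{(\delta)})\le 2\W_2(\pi_\infty,\pi_\infty P^{(\delta)}_{n_0\delta})$ via a fixed horizon $n_0$ with $C_0\e^{-\lambda n_0\delta}\le 1/2$, then a synchronous-coupling mean-square error bound — is exactly the architecture of the paper's Theorem \ref{IPM} and Lemma \ref{lemma2}. However, the step where you claim a \emph{uniform-in-$n$ contracting} error recursion
$\epsilon_{n+1}\le(1-c\delta)\epsilon_n+A(\delta,d)$
is a genuine gap. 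The contraction $\langle e_n,b(X_{n\delta})-b(X_{n\delta}^\delta)\rangle\lesssim -c|e_n|^2$ does \emph{not} follow from Theorem \ref{thm-0} or its proof, because the quasi-metric $\rho(x,y)=|x-y|^2(\sigma^2+V(x)+V(y))$ and the Lyapunov inequality \eqref{WE-} of Lemma \ref{pro} are established for the transition kernel of the \emph{numerical chain} only. The error $e_n=X_{n\delta}-X_{n\delta}^\delta$ pairs the exact diffusion with the scheme; the two components do not share a transition kernel, so the proof of Theorem \ref{thm} cannot be "run in the quasi-metric and converted back". To make your contraction rigorous you would need, in addition, a discrete-time Lyapunov condition of type \eqref{WE-} for the \emph{exact} semigroup over a step $\delta$, which the paper does not establish and your proposal does not attempt.

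Fortunately the gap is dispensable, and here the paper's actual route is worth comparing. Once $n_0\delta$ is bounded (say by $2\ln(1+2C_0)/\lambda+1$), you only need a finite-horizon error estimate, not a uniform-in-$n$ one, and a recursion with bounded \emph{growth} factor is enough: the paper's Lemma \ref{lemma2} proves $\epsilon_{n+1}\le(1+C\delta)\epsilon_n+A(\delta,d)$ with $\epsilon_0=0$, yielding $\epsilon_n\le \e^{Cn\delta}A(\delta,d)/(C\delta)$; for $n\delta$ bounded this has the same $\delta$- and $d$-scaling as your (unjustified) uniform bound. So you should drop the claimed contraction, and simply bound the recursion over $[0,n_0\delta]$. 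Two secondary remarks: (a) your It\^o-formula decomposition of $R_n$ ignores the $\tfrac{\sigma^2}{2}\Delta b$ contribution to $R_n^{\mathrm{drift}}$, whose trace scales like $d$ even when $b$ is globally Lipschitz; the paper avoids this by a first-order Taylor (fundamental-theorem-of-calculus) decomposition into $\Pi_1^{(\delta)}+\Pi_2^{(\delta)}$, with $\Pi_1^{(\delta)}$ conditionally mean-zero in the Brownian part and $\Pi_2^{(\delta)}$ controlled via $({\bf A}_5)$ and the fourth-moment increment bound \eqref{EE3}. (b) You would also need, as you note, explicit $d$-dependent polynomial moment bounds for both $\pi_\infty$ and $\pi_\infty^{(\delta)}$; the paper supplies these in Lemmas \ref{lemma1} and \ref{lemma3}, while your Banach fixed-point argument for existence of $\pi_\infty^{(\delta)}$ only gives $\W_2$ control (second moments) and not the higher moments needed to close the scheme-specific rate estimates.
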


Below, we offer some further explanations for Theorem \ref{cor}.
\begin{remark}
In comparison to \cite{DAS},  with regard to the same TEM scheme \eqref{EW0-}, in Theorem \ref{cor}
we have improved the associated convergence rate to $\gamma$, which is  close to $1/2$.
In Theorem \ref{cor}, the positive constant $C_0$ is  dimension-free,   and the number $\delta_2^{\star }=\delta_1^\star\wedge\big( K_R^*/(16L_0^2)\big)^{\frac{1}{1-2 \theta}}$, where  $\delta_1^\star\in(0,1]$ is the same as the one in Theorem \ref{thm-0} and the parameter $\theta\in[0,1/2)$ can be traced in Lemma \ref{lem1}.
On the other hand,
the parameters $\lambda,\si_0>0$   are  the same as those given in Theorem \ref{thm-0} for the TEM scheme \eqref{EW0-} and the PEM scheme \eqref{EWW}, respectively. In addition, with regarding to  the EM scheme \eqref{WW}, the quantities $\lambda,\si_0$
are  defined as in \eqref{DD} for   $R=R_*$, $C_R=K_R=3L_0$ and $K_R^*=L_5$. Finally, we would like  to stress that Assumption $({\bf A}_5)$ is enforced just to improve the corresponding convergence rate. That is to say, Assumption $({\bf A}_5)$  can be dropped definitely once one doesn't care about the higher order of  the associated convergence rate.
\end{remark}

In \cite{NMZ}, a new variant of the classical TEM scheme was proposed to improve the corresponding convergence rate of the non-asymptotic $L^2$-Wasserstein bound.
 Inspired by  \cite{NMZ}, we introduce    the following TEM algorithm: for any $n\ge0 $ and $\delta>0,$
\begin{align}\label{RT3}
X_{(n+1)\delta}^{\delta}= X_{n\delta}^{\delta} +\bar b^{(\delta)}( X_{n\delta}^{\delta} )\delta+\si\triangle W_{n\delta},
\end{align}
in which
\begin{align}\label{TY}
 \bar b^{(\delta)}(x):=
\frac{b(x)}{(1+\delta|x|^{2\ell_0})^{\frac{1}{2}}},\quad x\in\R^d.
\end{align}

Concerning  \eqref{RT3}, it is a formidable task to examine Assumption (${\bf H}_1$)  in Section \ref{sec2}. In turn, we have recourse to  the weak $L^2$-Wasserstein contraction of $(X_t)_{t\ge0}$  to explore directly the associated non-asymptotic $L^2$-Wasserstein bound.
In comparison with \cite[Theorem 2.10]{NMZ}, where the underlying convergence rate is $\frac{1}{2}$,
the theorem below (which is also very interesting in its own right)
demonstrates that the corresponding $L^2$-Wasserstein  convergence rate is $1$.
 \begin{theorem}\label{cor1-1}
Assume that $({\bf A}_0)$, $ ({\bf A}_2)$,  $ ({\bf A}_3)$ and $ ({\bf A}_5)$ hold.
Then,  there are constants $C_0,\si_0,\lambda_\star>0$   such that  for all
$\dd\in(0,\dd_{3}^\star]$, $\mu\in \mathscr P_{p_\star}(\R^d)$ and the noise intensity $\si$ satisfying  $|\si|\ge\si_0$,
 \begin{align}\label{EW**}
	\W_2(\mu P_{n\dd}^{(\dd)},\pi_\8)\le  C_0\big(\e^{-\ll_\star n\dd}\W_2(\mu,\pi_\8)+\dd d^{\ell_{\star\star}}\big),
\end{align}
where $\pi_\8$ is the unique IPM of $(X_t)_{t\ge0}$, and
\begin{equation}\label{DD6}
\begin{split}
\delta^\star_3:&= (L^2_2/(32L_0^4))\wedge(2\ss2/{L_2})\wedge (1/2),\quad   p_\star:=2(4\vee(2\ell_0^\star)\vee( 3\ell_0+1 ))
\\
 \ell_{\star\star}:&= (1+3\ell_0)\vee (\ell_0^\star+2).
\end{split}
\end{equation}
Consequently,
\begin{align} \label{WE}
	\W_2(\pi_\8^{(\delta)},\pi_\8)\le  C_0\dd  d^{\ell_{\star\star}},
\end{align}
where $\pi_\8^{(\delta)}$ is the IPM of $(X_{n\delta}^\delta)_{n\ge0}$ determined by \eqref{RT3}.
\end{theorem}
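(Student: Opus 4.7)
My plan is to bypass Theorem~\ref{thm-0}: the specific taming \eqref{TY} seems not to fit cleanly into the hypothesis $({\bf H}_1)$, so instead I would lean on the weak $L^2$-Wasserstein contraction of the \emph{exact} diffusion $(X_t)_{t\ge0}$. Under $({\bf A}_0)$, $({\bf A}_3)$ and sufficiently large $|\si|$ such a contraction $\W_2(\mu P_t,\nu P_t)\le \tilde C\, \e^{-\tilde\ll t}\W_2(\mu,\nu)$ is available from the methods of \cite{LMM} (or via the log-Sobolev route of \cite{Wang20}). The bound \eqref{EW**} then reduces, by the triangle inequality $\W_2(\mu P^{(\dd)}_{n\dd},\pi_\8)\le \W_2(\mu P^{(\dd)}_{n\dd},\mu P_{n\dd})+\W_2(\mu P_{n\dd},\pi_\8)$ and the exact contraction on the second summand, to controlling the \emph{discretisation} term $\W_2(\mu P^{(\dd)}_{n\dd},\mu P_{n\dd})$ through the synchronous coupling of $X^\dd$ with the exact $\bar X$ driven by the same Brownian motion.

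First I would establish the uniform-in-$n$ moment bound $\sup_n\E|X_{n\dd}^\dd|^{p_\star}\le C(1+d^{p_\star/2}+\E|X_0|^{p_\star})$ by a one-step Lyapunov argument on $V(x)=(1+|x|^2)^{p_\star/2}$: $({\bf A}_3)$ yields $\<x,\bar b^{(\dd)}(x)\>\le -c|x|^{\ell_0+2}/\ss{1+\dd|x|^{2\ell_0}}+C$, which combined with the bound $\dd|\bar b^{(\dd)}(x)|\le |x|+\dd|b({\bf 0})|$ produces a contractive recursion for $\dd$ small, with the Gaussian increment $\si\triangle W_{n\dd}$ contributing the dimension factor $d^{p_\star/2}$. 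Next, on each step $[n\dd,(n+1)\dd]$ I would decompose the local defect
\begin{align*}
\bar b^{(\dd)}(X_{n\dd}^\dd)-b(\bar X_t)=\bigl[\bar b^{(\dd)}(X_{n\dd}^\dd)-b(X_{n\dd}^\dd)\bigr]+\bigl[b(X_{n\dd}^\dd)-b(\bar X_t)\bigr].
\end{align*}
The taming term is pointwise $O(\dd\,|X_{n\dd}^\dd|^{3\ell_0+1})$ since $1-(1+u)^{-1/2}\le u/2$ and $|b(x)|=O(|x|^{\ell_0+1})$. For the continuity term I would invoke $({\bf A}_5)$ and apply It\^o's formula componentwise to $b(\bar X_t)$, followed by Fubini, obtaining
\begin{align*}
\int_{n\dd}^{(n+1)\dd}\!\!\bigl[b(X_{n\dd}^\dd)-b(\bar X_t)\bigr]\d t=-\int_{n\dd}^{(n+1)\dd}\!\!\bigl((n+1)\dd-s\bigr)\Bigl[\nn b\cdot b+\tfrac{\si^2}{2}\DD b\Bigr](\bar X_s)\d s-\si\!\int_{n\dd}^{(n+1)\dd}\!\!\bigl((n+1)\dd-s\bigr)\nn b(\bar X_s)\d W_s,
\end{align*}
namely a pointwise $O(\dd^2)$ drift remainder with polynomial growth in $|\bar X|$ plus a genuine martingale whose conditional second moment is $O(\dd^3\si^2(1+|\bar X|^{2\ell_0}))$ by It\^o's isometry.

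The concluding step telescopes the pathwise error $X_{n\dd}^\dd-\bar X_{n\dd}$ into a sum of these local defects propagated through subsequent steps, invoking the exact-process contraction on each propagation. The taming and deterministic-remainder pieces contribute $O(\dd^2)$ per step and, after the geometric sum $\sum_{k<n}\e^{-\tilde\ll(n-k)\dd}$, aggregate to $O(\dd\cdot d^{\ell_{\star\star}})$ in $L^2$. Crucially, the Brownian martingale contributions arising from distinct steps are orthogonal in $L^2$, so their accumulated second moment is $\sum_k\e^{-2\tilde\ll(n-k)\dd}\cdot O(\dd^3)=O(\dd^2)$, whence their $L^2$ norm is also $O(\dd\cdot d^{\ell_{\star\star}})$ rather than the pessimistic $O(\dd^{1/2})$ one would get by summing absolute values. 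Combining both contributions with the exact-process contraction on $\W_2(\mu P_{n\dd},\pi_\8)$ yields \eqref{EW**}; then letting $n\to\8$ and using the Krylov--Bogolyubov argument enabled by Step~1 together with the scheme contraction supplied by \eqref{EW**} gives existence of $\pi_\8^{(\dd)}$ and \eqref{WE}.

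The main obstacle is precisely extracting the $O(\dd)$ aggregated error rather than the naive $O(\dd^{1/2})$. This forces one to identify the Brownian-driven part of the one-step defect as a bona fide stochastic integral---which is why $({\bf A}_5)$ and It\^o's formula (rather than a plain mean-value estimate) are indispensable---and then to exploit the orthogonality of Brownian increments across distinct steps when telescoping. The accompanying bookkeeping, namely verifying that the moments entering through $\nn b\cdot b+\tfrac{\si^2}{2}\DD b$ and through the Taylor remainder of $({\bf A}_5)$ remain uniformly bounded of order $p_\star$, and that the dimension exponents collapse to $\ell_{\star\star}=(1+3\ell_0)\vee(\ell_0^\star+2)$ as prescribed in \eqref{DD6}, is the laborious companion to the martingale analysis.
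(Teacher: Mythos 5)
Your overall plan matches the paper's: bypass the contraction scheme of Theorem~\ref{thm-0} (the new taming \eqref{TY} indeed does not fit $({\bf H}_1)$), invoke the \emph{exact}-process $L^2$-Wasserstein contraction (the paper uses \cite{PM}, equivalent to your \cite{LMM}/\cite{Wang20} route), split $\W_2(\mu P^{(\delta)}_{n\delta},\pi_\infty)$ by the triangle inequality, prove a uniform moment bound for the scheme, and then quantify the discretisation error by a synchronous coupling whose one-step defect separates into a taming part, a one-sided-Lipschitz part, and a fluctuation part whose conditional mean vanishes. The paper's Lemma~\ref{lemma4} carries out precisely this decomposition — it isolates the terms $\Gamma_{42},\Gamma_{44}$ that vanish in expectation because $W_u-W_{u_\delta}$ is independent of $\mathscr F_{u_\delta}$, which is the same martingale observation you make. (Minor slips: your displayed It\^o/Fubini identity should have $b(\bar X_{n\delta})$, not $b(X_{n\delta}^\delta)$, on the left, since the two processes differ at time $n\delta$; and applying It\^o to $b(\bar X_t)$ invokes $\Delta b$ pointwise, whereas the paper Taylor-expands $b(X_u^\delta)$ around $X_{u_\delta}^\delta$ and uses $({\bf A}_5)$ only to bound the quadratic remainder, which is slightly lighter on regularity.)

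The genuine gap is in the aggregation step. Your plan telescopes the error \emph{per step} and claims that the Brownian-driven contributions from distinct steps remain orthogonal ``after propagation,'' so that their squared sum is $\sum_k \e^{-2\tilde\lambda (n-k)\delta}O(\delta^3)=O(\delta^2)$. That orthogonality is not preserved when the one-step increments are pushed forward through the \emph{nonlinear} exact flow map to time $n\delta$: the linearisation factors $\nabla_x\Phi_{(k+1)\delta,n\delta}$ are $\mathscr F_{n\delta}$-measurable, not $\mathscr F_{k\delta}$-measurable, so the propagated increments are no longer a martingale-difference sequence. If instead one optimally couples at each step and uses only the $\W_2$-contraction, the per-step error is $O(\delta^{3/2})$ (not $O(\delta^2)$), and $\sum_k\e^{-\tilde\lambda(n-k)\delta}O(\delta^{3/2})=O(\delta^{1/2})$ — too weak. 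The paper resolves this by decomposing the horizon into \emph{blocks} of $\lfloor\delta^{-1}\rfloor$ steps ($\approx$ unit length). Within each block it runs Gronwall on $\E|Z_t|^2$ from a zero initial error, where the vanishing-in-expectation of the Brownian terms yields a genuine $O(\delta^2)$ mean-square error per block (Lemma~\ref{lemma4}); then it telescopes over blocks, applying the exact-process Wasserstein contraction once per block, so the geometric series $\sum_i\e^{-\lambda_*(m-i)}$ is $O(1)$. You need this two-scale (Gronwall-inside-a-block, contraction-across-blocks) structure to convert the conditionally centred fluctuation into an $O(\delta)$ aggregate; a one-scale per-step telescope with a bare orthogonality claim does not close.
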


\begin{remark}
The constants $\lambda_\star,\si_0$ have been given explicitly in \cite[Theorem 1]{PM}. Once more, we stress that the positive constant $C_0$ in Theorem \ref{cor1-1} is dimension-free. In contrast to the TEM scheme \eqref{EW0-}, the non-asymptotic $L^2$-Wasserstein bound concerned with the newly designed algorithm \eqref{RT3}   enjoys a faster convergence rate. Unfortunately, the latter one requires the higher moment with regard to initial distributions. Whereas, as far as the schemes \eqref{WW}, \eqref{EW0-} and \eqref{EWW} are concerned, the requirement on finite second-order  moment of initial distributions is enough to investigate the associated non-asymptotic $L^2$-Wasserstein bound as demonstrated in Theorem \ref{cor}.
\end{remark}

Before the end of this subsection,
we make the following  table and  compare clearly  the results derived in the present work
with the existing literature based on   various aspects (e.g., the technical assumption, the convergence rate and the dimension dependency).

\smallskip

\renewcommand{\arraystretch}{1.2}
	\begin{center}\label{tab}
\begin{tabular}{llllc}
        \toprule
        \textbf{Source} & \textbf{Algorithm} & \textbf{Convexity} & \textbf{order}& \textbf{Dependence on $d$}\\
        \midrule
        \cite[Theorem 6]{BDMS} & TEM & strong convex &$\dd$&-\\
        \cite[Corollary 2.10]{DAS} & TEM& non-convex & $\dd^{\frac 14}$&$\e^{cd}$\\
        \cite[Theorem 2.10]{NMZ} & TEM &non-convex &$\dd^{\frac12}$&$\e^{cd}$\\
        \cite[Corollary 9]{DM} & EM & strong convex & $\dd$&$d$ \\
        \midrule
         \textbf{Theorem \ref{cor}} & EM & non-convex&$\delta$&$ d^{\ff12}$\\
        \textbf{Theorem \ref{cor}} & TEM \eqref{EW0-}& non-convex&$\delta^\gamma $&$ d^{\ell_0+1/2}$\\
         \textbf{Theorem \ref{cor1-1}} & TEM \eqref{RT3}& non-convex& $\delta $&$d^{(3\ell_0+1)\vee ({\ell_0^\star}+2)} $\\
          \textbf{Theorem \ref{cor}} & PEM& non-convex &$\delta $&$d^{(\ell_0+1+{\ell_0^\star}/2)\vee((\ell_0+1)/2+\ell_0/\gamma)}$\\

        \bottomrule
    \end{tabular}

     \parbox{\linewidth}{\centering \textbf{Table}:  The dimension dependency and the convergence rate of $\mathbb W_2(\mathscr L_{X_{n\dd}^\dd},\pi_\8)$.}
\end{center}

\subsection{Application II: strong law of large numbers}
So far, the issue on limit theorems for continuous-time Markov processes has been studied very intensively; see, for instance, \cite{JS,KW,KLO,Kulik,Sh}. Recently, in \cite{BH} we investigated the  central limit theorems and the strong LLN for SDEs with irregular drifts, which allow the drifts involved to be H\"older continuous or piecewise continuous. In the meantime, there is a huge literature on the establishment of limit theorems for stochastic algorithms; see \cite{CDH,JD,LX,PR}, to name just a few. In particular, concerning the numerical version of the  LLN, most of the literature focuses on the case that the drifts under investigation  are {\it globally dissipative}; see, for example, \cite{CDH,JD}. Indeed, under the globally dissipative condition,  the classical synchronous coupling approach has been employed to handle the  ergodicity  of continuous-time stochastic systems under consideration. Unfortunately, such an approach no longer works once the underlying stochastic systems   are partially dissipative.

As a further application of Theorem \ref{cor} (so Theorem \ref{thm-0}),  in this subsection  we make an attempt to establish the strong  LLN   of the additive functionals associated with the   schemes \eqref{WW}, \eqref{EW0-} and \eqref{EWW}, where the drifts   might be {\it dissipative merely outside of a ball}. For this purpose, we introduce the following function class.  For  $\rho\ge 0$, we define the set $\mathscr C_{\rho}$ by
\begin{align}\label{f1}
\mathscr C_\rho=\bigg\{f:\R^d\to\R\Big|\|f\|_\rho:=\sup_{x\neq y}\ff{\big|f(x)-f(y)\big|}{|x-y|(1+|x|^\rho+|y|^\rho)}<\8\bigg\}.
\end{align}

The following theorem demonstrates that the associated  additive functional (i.e., the time average) converges a.s.  to the spatial average with respect to the equilibrium, and  provides  the corresponding  convergence rate.
\begin{theorem}\label{LLN0}
Assume that $(i)$ $({\bf A}_0)$ with $\ell_0=0$, $({\bf A}_{4})$ and $({\bf A}_{5})$ with $\ell_0^\star=0$ for the EM scheme \eqref{WW};   $(ii)$ $({\bf A}_0)$, $({\bf A}_2)$ and $({\bf A}_{3})$ for the TEM scheme \eqref{EW0-};   $(iii)$ $({\bf A}_0)$, $({\bf A}_{4})$ and $({\bf A}_{5})$ for the PEM scheme \eqref{EWW}.
Then, for any $f\in\mathscr C_\rho$, $\vv\in(0,1/2)$, and $x\in\R^d,$ there exist constants $C_0=C_0(x,\rho,\vv,\|f\|_\rho),\si_0>0$ and  a random time $N_{\vv,\dd}=N_{\vv,\dd}(x,\rho,d)\ge 1$    such that for all $\dd\in(0,\dd_2^\star]$,  $n\ge N_{\vv,\dd}$ and the noise intensity  $\si$ satisfying  $|\si|\ge\si_0,$
\begin{equation}\label{EE1*}
\bigg|\frac{1}{n}\sum_{k=0}^{n-1} f(X_{k\dd}^{\dd,x})-\pi_\8(f) \bigg|\le C_0 \big(n^{-{1}/{2}+\vv}\dd^{-1/2}+\dd^{\gamma_\star}   d^{(\ell_\star+ \rho)/2}\big),\quad a.s.,
\end{equation}
where
$\pi_\8$ means the unique IPM of $(X_{t})_{t\ge 0}$ solving \eqref{E1}, and the  quantities $\gamma_\star,\ell_\star$ are the same as those given in Theorem \ref{cor}. Moreover, for any $q>0,$ there is a constant $C_q^*>0$ such that
\begin{align}\label{7W*}
	\E N_{\vv,\dd}^q\le C^*_q\big(1+|x|^{2(1+\rho)} +d^{1+\rho}\big)^{\frac{q+2}{2\vv}}.
\end{align}

\end{theorem}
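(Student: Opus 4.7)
The plan is to combine Theorem \ref{thm-0} (the weak $\W_2$-contraction) with a Poisson-equation/martingale decomposition, and then to upgrade $L^{2p}$ moment control into an almost-sure rate via Borel--Cantelli. The starting point is the decomposition
\begin{align*}
\ff1n\sum_{k=0}^{n-1}f(X_{k\dd}^{\dd,x})-\pi_\8(f)=\underbrace{\ff1n\sum_{k=0}^{n-1}\big(f(X_{k\dd}^{\dd,x})-\pi_\8^{(\dd)}(f)\big)}_{=:\,I_n}+\underbrace{\pi_\8^{(\dd)}(f)-\pi_\8(f)}_{=:\,II_\dd}.
\end{align*}
The deterministic bias $II_\dd$ is handled directly by Theorem \ref{cor}: since $f\in\mathscr C_\rho$, an optimal $\W_2$-coupling of $(\pi_\8,\pi_\8^{(\dd)})$ combined with Cauchy--Schwarz and uniform $2\rho$-moment bounds on both invariant measures yields $|II_\dd|\1 \|f\|_\rho\W_2(\pi_\8,\pi_\8^{(\dd)})\big(1+d^{\rho/2}\big)\1 \dd^{\gamma_\star}d^{(\ell_\star+\rho)/2}$.

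For the random fluctuation $I_n$, set $\tilde f:=f-\pi_\8^{(\dd)}(f)$ and introduce the Poisson solution $g:=\sum_{k\ge 0}(P^{(\dd)})^k\tilde f$. Applying Theorem \ref{thm-0} along the synchronous coupling, together with the polynomial-Lipschitz structure of $f$ and the uniform moment estimates of Lemma \ref{nonexplosition}, gives
\begin{align*}
|g(x)-g(y)|\1 (\ll\dd)^{-1}\|f\|_\rho\,|x-y|\big(1+|x|^\rho+|y|^\rho+d^{\rho/2}\big),
\end{align*}
where the $(\ll\dd)^{-1}$ factor is the mass of $\sum_{k\ge0}\e^{-\ll k\dd}$. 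The identity $\tilde f=g-P^{(\dd)}g$ then produces the martingale decomposition
\begin{align*}
\sum_{k=0}^{n-1}\tilde f(X_{k\dd}^{\dd,x})=g(x)-g(X_{n\dd}^{\dd,x})+M_n,\qquad M_n:=\sum_{k=0}^{n-1}\big(g(X_{(k+1)\dd}^{\dd,x})-P^{(\dd)}g(X_{k\dd}^{\dd,x})\big).
\end{align*}
Using that the one-step noise is Gaussian of covariance $\si^2\dd I_d$ together with the Lipschitz bound on $g$ yields the per-step conditional variance $\E\big[|\Delta M_k|^2\mid \mathscr F_{k\dd}\big]\1 \dd^{-1}d\big(1+|X_{k\dd}^{\dd,x}|^{2\rho}+d^{\rho}\big)$; Burkholder--Davis--Gundy combined with uniform $2p\rho$-moment bounds on the chain then delivers, for every integer $p\ge 1$,
\begin{align*}
\E|I_n|^{2p}\1 \frac{C_p\big(1+|x|^{2p(1+\rho)}+d^{p(1+\rho)}\big)}{(n\dd)^p}.
\end{align*}

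The almost-sure rate at scale $n^{-1/2+\vv}\dd^{-1/2}$ follows from Markov's inequality applied with any integer $p>1/(2\vv)$, yielding
\begin{align*}
\P\big(|I_n|>n^{-1/2+\vv}\dd^{-1/2}\big)\1\big(1+|x|^{2(1+\rho)}+d^{1+\rho}\big)^p n^{-2p\vv},
\end{align*}
so that Borel--Cantelli provides the random index $N_{\vv,\dd}$ beyond which \eqref{EE1*} is valid. For the moment bound \eqref{7W*} I sum the tails, $\P(N_{\vv,\dd}>n)\1 (1+|x|^{2(1+\rho)}+d^{1+\rho})^p n^{1-2p\vv}$, insert into the layer-cake formula $\E N_{\vv,\dd}^q=q\int_0^\8 t^{q-1}\P(N_{\vv,\dd}>t)\,\d t$, and choose $p=(q+2)/(2\vv)$, which balances the two resulting terms and produces the announced exponent $(q+2)/(2\vv)$. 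The principal obstacle will be proving the polynomial Lipschitz bound for the Poisson solution $g$ uniformly in $\dd$ with the correct $(\ll\dd)^{-1}$ scaling and the stated dimensional dependence; this is particularly delicate for the PEM scheme \eqref{EWW}, where the truncation $\pi^{(\dd)}$ interacts nontrivially with the Lyapunov estimate of Lemma \ref{nonexplosition}, and one must simultaneously make sure that none of the Cauchy--Schwarz steps inflates the dimensional exponent beyond the $(\ell_\star+\rho)/2$ asserted in \eqref{EE1*}.
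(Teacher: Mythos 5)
Your overall skeleton matches the paper's: the same bias/fluctuation split $I_n + II_\dd$, the same Cauchy--Schwarz plus $\W_2$-coupling treatment of the bias $II_\dd$ via Theorem~\ref{cor}, the same Chebyshev/Borel--Cantelli reduction, and the same choice of moment exponent $\sim(q+2)/(2\vv)$ for the bound on $\E N_{\vv,\dd}^q$. Where you genuinely diverge from the paper is in how the key moment inequality $\E|S_n^{\dd,x}|^{2q}\lesssim (1+|x|^{2(1+\rho)}+d^{1+\rho})^q(n\dd)^{-q}$ is obtained: you solve the Poisson equation $\tilde f=g-P^{(\dd)}g$ with $g=\sum_{k\ge0}(P^{(\dd)})^k\tilde f$ and pass through a martingale decomposition $\sum\tilde f(X_{k\dd})=g(x)-g(X_{n\dd})+M_n$ handled by BDG/Rosenthal, whereas the paper (Theorem~\ref{LLN}) instead expands the $2q$-th moment directly, applies the Markov property to replace $f(X_{m\dd})$ by $P^{(\dd)}_{(m-k)\dd}f(X_{k\dd})$, and uses the exponential decay bound \eqref{W4} for the semigroup together with the uniform polynomial moments of Lemma~\ref{lemma3}. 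Both routes hinge on the very same ingredient (Theorem~\ref{thm-0} applied with $\nu=\pi_\8^{(\dd)}$, yielding $|(P^{(\dd)})^k\tilde f(x)|\lesssim\e^{-\ll k\dd}(1+|x|^{1+\rho}+d^{(1+\rho)/2})$) and produce identical dimensional/step-size scalings; the paper's covariance expansion is more elementary and closer to a bare-hands Rosenthal estimate, while your Poisson/martingale route is more structural and would additionally set the stage for a CLT. The obstacle you flag at the end is in fact not a real one: the polynomial Lipschitz bound on $g$ with the $(\ll\dd)^{-1}$ factor follows directly from the $\W_2$-contraction, Cauchy--Schwarz, and the moment bounds of Lemma~\ref{lemma3}, uniformly in $\dd$ and including the PEM scheme, exactly as in \eqref{W4}. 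One small omission: Theorem~\ref{LLN0} is deduced in the paper by first proving a general criterion (Theorem~\ref{LLN}) under $({\bf H}_1)$--$({\bf H}_5)$, and then verifying those hypotheses for each of the three schemes; your write-up implicitly works at the level of the general criterion and should state that the verification of the hypotheses for EM/TEM/PEM is the one already carried out in the proof of Theorem~\ref{cor}.
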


\begin{remark}
For the modified EM scheme \eqref{E2},   we build   a general framework to establish the strong LLN; see Section \ref{sec5} for more details.
Satisfactorily, Theorem \ref{LLN0}  demonstrates that the convergence rate with respect to the number of iteration $n$ is nearly optimal.
Additionally,  Theorem \ref{LLN0} is  applicable to more examples which are excluded in e.g. \cite{CDH,JD}.
\end{remark}

The content of this paper is organized as follows.
Section \ref{sec2} is devoted to providing criteria  on the weak $L^2$-Wasserstein contraction of the modified EM scheme \eqref{E2}. Based on this,
the proof of Theorem \ref{thm-0} is finished in Section \ref{sec2}. In Section \ref{sec3}, we furnish sufficient conditions to derive  the non-asymptotic $L^2$-Wasserstein bounds of the modified EM algorithm \eqref{E2}. Subsequently, the proof of Theorem \ref{cor} is done. In Section \ref{sec4}, we aim at carrying out the proof of Theorem \ref{cor1-1} with the aid of the $L^2$-Wasserstein contraction of $(X_t)_{t\ge0}$.  In the last section, we  establish the  strong LLN for the scheme  \eqref{E2}. As an application,   the proof of Theorem \ref{LLN0} is complete.

\section{  Criteria on weak $L^2$-Wasserstein contraction and Proof of Theorem \ref{thm-0} }\label{sec2}

At first glance, the modified EM scheme \eqref{E2} is unusual. So, before we move on,
it is primary to address the issue that, under what suitable conditions, the
  scheme \eqref{E2} is non-explosive in a finite horizon. To this end, we impose  the following assumption:
\begin{enumerate}
\item[(${\bf H}_0$)]there exist constants $c_*,c^*>0$ and $\kk\in(0,1/2]$ such that for all $\delta\in(0,1]$ and $x\in\R^d,$
\begin{align*}
|b^{(\delta)}(x)|\le c_*+c^*\delta^{-\kk}|x|\quad \mbox{ and } \quad \<x,b^{(\delta)}(x)\>\le c_*+c^*|x|^2.
\end{align*}
\end{enumerate}

The following lemma  demonstrates that the scheme \eqref{E2} has a finite second-order moment (so it is not explosive) in a finite-time interval.

\begin{lemma}\label{nonexplosition}
Under $({\bf H}_0)$  and $({\bf A}_1)$, it holds that for any $\delta\in(0,1]$, $x\in\R^d,$ and $n\ge0,$
\begin{align}\label{ET*}
\E|X_{n\delta}^{\delta,x}|^2\le\bigg(|x|^2+\frac{2c_*(1+c_*)+\si^2d}{2c^*(1+ c^*)}\bigg)\e^{2c^*(1+ c^*)n \delta}.
\end{align}

\end{lemma}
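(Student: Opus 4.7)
The plan is to derive a one-step recursive bound for $\E|X_{n\delta}^{\delta,x}|^2$ and then iterate it. Write $Y_n := \pi^{(\delta)}(X_{n\delta}^{\delta,x})$, so that the scheme reads $X_{(n+1)\delta}^{\delta,x}=Y_n+b^{(\delta)}(Y_n)\delta+\sigma\triangle W_{n\delta}$. Since $\triangle W_{n\delta}$ is independent of $\mathscr F_{n\delta}$ with mean $0$ and $\E|\triangle W_{n\delta}|^2=d\delta$, squaring and taking expectations kills the cross term and yields
\begin{equation*}
\E|X_{(n+1)\delta}^{\delta,x}|^2 \;=\; \E\bigl|Y_n+b^{(\delta)}(Y_n)\delta\bigr|^2 \;+\; \sigma^2 d\,\delta.
\end{equation*}

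Next, I would expand $|Y_n+b^{(\delta)}(Y_n)\delta|^2=|Y_n|^2+2\delta\langle Y_n,b^{(\delta)}(Y_n)\rangle+\delta^2|b^{(\delta)}(Y_n)|^2$ and apply $({\bf H}_0)$ twice: the dissipativity-type bound gives $\langle Y_n,b^{(\delta)}(Y_n)\rangle\le c_*+c^*|Y_n|^2$, while the growth bound yields $|b^{(\delta)}(Y_n)|^2\le 2c_*^2+2(c^*)^2\delta^{-2\kappa}|Y_n|^2$. Since $\kappa\le 1/2$ and $\delta\in(0,1]$, the factor $\delta^{2-2\kappa}\le\delta$, and $\delta^2\le\delta$, so collecting constants gives
\begin{equation*}
\bigl|Y_n+b^{(\delta)}(Y_n)\delta\bigr|^2 \;\le\; \bigl(1+2c^*(1+c^*)\delta\bigr)|Y_n|^2 + 2c_*(1+c_*)\delta.
\end{equation*}
Now I invoke $({\bf A}_1)$ together with $\pi^{(\delta)}(\mathbf 0)=\mathbf 0$ to get the pointwise contraction $|Y_n|=|\pi^{(\delta)}(X_{n\delta}^{\delta,x})-\pi^{(\delta)}(\mathbf 0)|\le |X_{n\delta}^{\delta,x}|$, so that in expectation
\begin{equation*}
\E|X_{(n+1)\delta}^{\delta,x}|^2 \;\le\; \bigl(1+\alpha\delta\bigr)\,\E|X_{n\delta}^{\delta,x}|^2 + \beta\delta, \qquad \alpha:=2c^*(1+c^*),\ \ \beta:=2c_*(1+c_*)+\sigma^2 d.
\end{equation*}

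The last step is a routine discrete Gronwall iteration starting from $X_0^{\delta,x}=x$: one obtains $\E|X_{n\delta}^{\delta,x}|^2\le(1+\alpha\delta)^n|x|^2+\beta\delta\sum_{k=0}^{n-1}(1+\alpha\delta)^k$, and bounding the geometric sum by $\bigl((1+\alpha\delta)^n-1\bigr)/(\alpha\delta)$ followed by $(1+\alpha\delta)^n\le \e^{\alpha n\delta}$ delivers the stated bound $\bigl(|x|^2+\beta/\alpha\bigr)\e^{\alpha n\delta}$. No step is a real obstacle here; the only point of care is that $({\bf H}_0)$ is applied to $Y_n=\pi^{(\delta)}(X_{n\delta}^{\delta,x})$ rather than to $X_{n\delta}^{\delta,x}$ itself, which is precisely the reason the contractivity $({\bf A}_1)$ is needed to translate the Lyapunov inequality back to the iterates.
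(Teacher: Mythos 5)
Your proof is correct and follows essentially the same route as the paper's: expand the one-step recursion, drop the cross term with the Gaussian increment, apply both bounds from $({\bf H}_0)$ together with $\kappa\le 1/2$ and $\delta\le 1$ to absorb the $\delta^{2-2\kappa}$ and $\delta^2$ factors, translate from $\pi^{(\delta)}(X_{n\delta}^{\delta,x})$ back to $X_{n\delta}^{\delta,x}$ via $({\bf A}_1)$, and close with a discrete Gronwall iteration and $1+r\le\e^r$. The constants and the final bound coincide with \eqref{ET*} exactly.
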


\begin{proof}
In terms of  the expression of the modified EM scheme given in \eqref{E2}, it is easy to see that
\begin{align*}
\E\big|X_{(n+1)\delta}^{\delta,x}\big|^2&=\E\big|\pi^{(\delta)}(X_{n\delta}^{\delta,x})\big|^2
+2\E\<\pi^{(\delta)}(X_{n\delta}^{\delta,x}),b^{(\delta)}(\pi^{(\delta)}(X_{n\delta}^{\delta,x}))\>\delta\\
&\quad+
\E\big|b^{(\delta)}(\pi^{(\delta)}(X_{n\delta}^{\delta,x}))\big|^2\delta^2 +
\si^2d\delta,
\end{align*}
where we also used the following facts:
\begin{align*}
\E\<\pi^{(\delta)}(X_{n\delta}^{\delta,x})+b^{(\delta)}(\pi^{(\delta)}(X_{n\delta}^{\delta,x}))\>\delta,\triangle W_{n\delta}\>=0\quad \mbox{ and } \quad \E|\triangle W_{n\delta}|^2=d\delta.
\end{align*}
Next, by virtue of $({\bf H}_0)$, along with $\kk\in(0,1/2]$, $|\pi^{\delta}(x)|\le|x|$ as well as the  inequality: $(a+b)^2\le 2(a^2+b^2), a,b\in\R$, it follows that for all   $\delta\in(0,1],$
\begin{align*}
\E\big|X_{(n+1)\delta}^{\delta,x}\big|^2
&\le\big(1+2c^*\delta+2(c^*)^2\delta^{2(1-\kk)}\big)\E\big| \pi^{(\delta)}(X_{n\delta}^{\delta,x})\big|^2+(2c_*(1+c_*)+\si^2d)\delta\\
&\le \big(1+2c^* (1+  c^*) \delta\big)\E\big| X_{n\delta}^{\delta,x}\big |^2+(2c_*(1+ c_*)+\si^2d)\delta.
\end{align*}
 Thereafter, an inductive argument, besides the basic inequality: $1+r\le \e^{r}, r\ge0,$ shows that
\begin{align*}
\E\big|X_{(n+1)\delta}^{\delta,x}\big|^2&\le \big(1+2c^*(1+ c^*) \delta\big)^{n+1}|x|^2+(2c_*(1+ c_*)+\si^2d)\delta\sum_{i=0}^n\big(1+2c^*(1+ c^*) \delta\big)^i\\
&\le \bigg(|x|^2+\frac{2c_*(1+ c_*)+\si^2d}{2c^*(1+ c^*)}\bigg)\big(1+2c^*(1+ c^*) \delta\big)^{n+1}\\
&\le \bigg(|x|^2+\frac{2c_*(1+ c_*)+\si^2d}{2c^*(1+ c^*)}\bigg)\e^{2c^*(1+ c^*)(n+1) \delta}.
\end{align*}
This thus  yields the desired assertion \eqref{ET*} right now.
\end{proof}

\begin{remark}\label{rem2}
It is ready to see that, as regards the TEM scheme \eqref{EW0-} and the PEM scheme \eqref{EWW},
(${\bf H}_0$) is fulfilled as soon as $({\bf A}_0)$  and  
the drift condition
$\<x,b(x)\>\le c_1+c_2|x|^2,x\in\R^d,$ holds true for some positive constants $c_1,c_2.$ Apparently, (${\bf H}_0$)
implies the following one:
\begin{enumerate}
\item[(${\bf H}_0'$)]there exist constants $c_*,c^*>0$ and $\kk\in(0,1/2]$ such that for all $\delta\in(0,1]$ and $x\in\R^d,$
\begin{align*}
|b^{(\delta)}(\pi^{(\delta)}(x))|\le c_*+c^*\delta^{-\kk}|\pi^{(\delta)}(x)|\quad \mbox{ and } \quad \<\pi^{(\delta)}(x),b^{(\delta)}(\pi^{(\delta)}(x))\>\le c_*+c^*|\pi^{(\delta)}(x)|^2.
\end{align*}
\end{enumerate}
As   a matter of fact, by checking the course of the proof of Lemma \ref{nonexplosition},
 Assumption (${\bf H}_0'$)  is sufficient for our purpose. By invoking \eqref{E0}, \eqref{E*} and   \eqref{EY*}
below, Assumption (${\bf H}_0')$ can be guaranteed; see Remark \ref{re1} (1) for more details. Hence, the main result (i.e., Theorem \ref{thm}) presented in this section can make sense due to the non-explosion of the scheme \eqref{E2}.
\end{remark}

To avoid proving  Theorem \ref{thm-0}    on a case-by-case basis, in this section we intend to  establish  a corresponding  general
result (see Theorem \ref{thm} below) for the modified EM scheme \eqref{E2}.  As long as the general result is available, the proof  of
 Theorem \ref{thm-0}  can be finished  via checking successively the associated   conditions.
To explore the $L^2$-Wasserstein contraction of $(X_{n\delta}^\delta)_{n\ge0,}$ we enforce some technical  conditions on the modified drift $b^{(\delta)}$
and the noise intensity $\si.$ More precisely, we suppose    that
\begin{enumerate}
\item[(${\bf H}_1$)] there exist constants $R, C_R,K_R>0$,   $ \theta\in[0,1/2)$, and $\delta_R\in(0,1]$ such that for any $\delta\in(0,\delta_R]$, and  $x,y\in\R^d,$
\begin{equation}\label{E0}
\big|b^{(\delta)}(\pi^{(\delta)}(x))-b^{(\delta)}(\pi^{(\delta)}(y))\big|\le
\begin{cases}
 C_R|x-y|,\quad \quad~~ |x|\le R, |y|\le R,\\
  K_R\delta^{- \theta} |x-y|, \quad |x|>R \mbox{ or } |y|>R;
\end{cases}
\end{equation}
moreover, there exists a  constant   $ K_R^* >0$   such that for all $\delta\in(0,\delta_R]$, and $x,y\in\R^d $  with $|x|>R$  or $|y|>R$,
\begin{align}\label{E*}
\big\< \pi^{(\delta)}(x)-\pi^{(\delta)}(y), b^{(\delta)}(\pi^{(\delta)}(x))-b^{(\delta)}(\pi^{(\delta)}(y))\big\>\le -K_R^* \big| \pi^{(\delta)}(x)-\pi^{(\delta)}(y)\big|^2;
\end{align}
\item[(${\bf H}_2$)]
for any $r> 0,$ there exists a constant $  \delta_{  r}^* \in(0,1]$ such that for any $\delta\in(0,  \delta_{r}^* ]$,
\begin{align}\label{EY}
 \inf_{|x|\ge r}\big|\pi^{(\delta)}(x)\big|\ge r\quad \mbox{ and } \quad  |\pi^{(\delta)}(x)| = |x|  \quad \mbox{ for } |x|\le r;
\end{align}
in addition, there exist increasing positive functions $[ 0,\8)\ni r \mapsto \psi_1(r),  \psi_2(r)  $
  such that for any $\delta\in(0,1]$, $x\in\R^d,$ and $r\ge 0,$
\begin{align}\label{EY*}
\big|b^{(\delta)}(\pi^{(\delta)}(x))\big|\le   \psi_1(r)  +   \psi_2(r)  \delta^{-\theta}|\pi^{(\delta)}(x)|\I_{\{|\pi^{(\delta)}(x)|>r\}}.
\end{align}

\item[(${\bf H}_3$)] the noise intensity $\si$ satisfies
\begin{align*}
\si^2>\frac{8(1+ R)}{dK_R^*}\big( (3C_R(K_R^*+96 C_R)(1+R))   \vee     (\psi_1( R)K_R^*)  \vee(12C_R  \psi_1(R_0) )  \big),
\end{align*}
in which 
$
R_0 : =2\big((1+  R)(1+ 96 C_R/{K_R^*})+  \psi_1(1)\big).
$

\end{enumerate}

Below,  we make some comments regarding Assumptions $({\bf H}_1 )$-$({\bf H}_3)$.
\begin{remark}\label{re1}

\begin{enumerate}
\item[(1)] By virtue of \eqref{EY*} with $r=1$, it is easy to see that for  $\delta\in(0,1]$ and $x\in\R^d,$
\begin{align*}
\big|b^{(\delta)}(\pi^{(\delta)}(x))\big|\le   \psi_1(1)  +   \psi_2(1)  \delta^{-\theta}|\pi^{(\delta)}(x)|.
\end{align*}
Next, we obtain from \eqref{E0} and \eqref{E*} that  for all  $\delta\in(0,1]$ and $x\in\R^d,$
\begin{equation*}
\begin{split}
\<\pi^{(\delta)}(x),b^{(\delta)}(\pi^{(\delta)}(x))\>&=\<\pi^{(\delta)}(x),b^{(\delta)}(\pi^{(\delta)}(x))-b^{(\delta)}({\bf0})\>+\<\pi^{(\delta)}(x), b^{(\delta)}({\bf0})\>\\
&\le |\pi^{(\delta)}(x)|\cdot|b^{(\delta)}(\pi^{(\delta)}(x))-b^{(\delta)}({\bf0})|\I_{\{|x|\le R\}}\\
&\quad-K_R^*|\pi^{(\delta)}(x)|^2\I_{\{|x|> R\}}+|\pi^{(\delta)}(x)|\cdot|b^{(\delta)}({\bf0})|\\
&\le (C_R+K_R^*)R^2+(1/2-K_R^*)|\pi^{(\delta)}(x)|^2+ |b^{(\delta)}({\bf0})|^2/2,
\end{split}
\end{equation*}
where in the last inequality we used $|\pi^{(\delta)}(x)|\le |x|$.
Then, due to  $\sup_{\delta\in(0,1]}|b^{(\delta)}({\bf0})|<\8,$
 Assumption $({\bf H}_0')$ is satisfied so    the modified EM scheme \eqref{E2} is non-explosive in a finite-time interval by taking advantage of Lemma \ref{nonexplosition} and Remark \ref{rem2}.

\item[(2)]
 \eqref{E0} shows that $b^{(\delta)}\circ\pi^{(\delta)}$ is globally Lipschitz, where the underlying Lipschitz constant is dependent on the step size when one of the spatial variables is located outside of a compact set. Moreover,
   \eqref{E*} reveals that $b^{(\delta)} $ is dissipative merely in the outside of a compact set. Assumption   $({\bf H}_1)$ seems to be  unconventional. Whereas, $({\bf H}_1)$ can be satisfied by several well-known numerical approximation schemes. For instance,
   in case of  $b^{(\delta)} =b  $ and $\pi^{(\delta)} ={\rm Id}$ (which correspond to the classical EM scheme),
 Assumption (${\bf H}_1$)  with $\theta=0$  reduces to $({\bf A}_2)$ and \eqref{EE11-} with $\ell_0=0$, which
  coincide exactly with \cite[Assumption 1 and Assumption 2]{LMM}. Furthermore,
  concerning (i) $\pi^{(\delta)}={\rm Id}$ and $b^{(\delta)}$ defined in \eqref{EY-}, and  (ii) $b^{(\delta)} =b $
and $\pi^{(\delta)} $ given in \eqref{ET-} (which correspond respectively   to the TEM scheme and the PEM scheme), Assumption $({\bf H}_1)$ is also fulfilled under (i) (${\bf A}_0$), (${\bf A}_2$) and  (${\bf A}_3$), as well as  (ii) (${\bf A}_0$) and   (${\bf A}_4$), separately;
 see Lemma   \ref{lem1}  for more details. Additionally, in \cite[({\bf H1})]{DEEGM},
  a counterpart    of  (${\bf H}_1$)  given as below: for some constants $c_1,c_2,\ell_0>0,$
\begin{align}\label{EP*}
|T_\delta(x)-T_\delta(y)|\le \big(1+c_1\delta \I_{\{|x-y|\le \ell_0\}}-c_2\delta\I_{\{|x-y|> \ell_0\}}\big)|x-y|,~x,y\in\R^d
\end{align}
with  $T_\delta:\R^d\to\R^d$ ($T_\delta(x):=x+b(x)\delta$ for the EM scheme) was exerted  to provide bounds in the Wasserstein distance  and the total variation distance  between the distributions of two functional autoregressive processes. Whereas, some important stochastic algorithms (e.g., the PEM scheme) are ruled out by \eqref{EP*}.

\item[(3)] Assumption $({\bf H}_2)$ is still abstract. However, it is  fulfilled  by     the TEM scheme \eqref{EW0-} and the PEM scheme \eqref{EWW}; see Lemma \ref{lem2} below for related  details.  (${\bf H}_3$) indicates that the noise   is non-degenerate and the associated noise intensity should be strong enough.
 \end{enumerate}
\end{remark}

Under $({\bf H}_1 )$,
the weak contraction  of $(X_{n\delta}^\delta)_{n\ge0}$ determined by \eqref{E2} under different probability distances (including  the  $L^1$-Wasserstein distance plus the total variation, the additive Wasserstein distance, as well as the $L^1$-Wasserstein distance)
 was addressed   in \cite{BMW}.
  Nonetheless,  the exploration  on the weak $L^2$-Wasserstein contraction was left open  therein when the drift terms are of super-linear growth in particular.

 Before we proceed, we introduce some quantities. Set
 \begin{equation}\label{WQ*}
\begin{split}
\lambda_1: &=  6C_R\si^2\Big(\si^2+\frac{24C_R(K_R^*+96 C_R)(1+R)^2}{dK_R^*}\Big)^{-1}-3C_R ,\quad \lambda_2:=K_R^*/4\\
C_0: &=\bigg(1+\frac{24C_R(K_R^*+96 C_R)(1+R)^2}{dK_R^*\si^2}\bigg)^{\ff12},
\end{split}
\end{equation}
where   $\lambda_1$ is positive by taking $({\bf H}_3)$ into account. Moreover, let
 \begin{equation}\label{dd1}
 \begin{split}
\delta_1^\star = &\delta_R\wedge (2K_R^*)^{-1}\wedge (K_R^*/K_R^2)^{\frac{1}{1-2\theta}}\wedge (1/C_R) \wedge (1/{\lambda_1})\wedge (1/{\lambda_2})\\
&\wedge \delta_{  R_0 }^* \wedge   (2 \psi_2(1) )^{\frac{1}{\theta-1}}\wedge \big(4(2+d) (1+ R^2 )\si^2\big)^{-1}.
\end{split}
\end{equation}
Herein, we want to stress that, in some scenarios, the underlying upper bounds of the step size are allowed to be bigger than $\delta_1^\star$ given in \eqref{dd1}. Nevertheless, throughout this section we always use the upper bound $\delta_1^\star$ to avoid introducing too many unimportant quantities.

The main result in this section is stated as follows, where the corresponding proof is deferred to the end of this  section.

\begin{theorem}\label{thm}
Assume that   $({\bf A}_1)$ and $({\bf H}_1)$-$({\bf H}_3)$ hold.
Then,
   for all  $\delta\in(0,\delta_1^\star]$,  $n\ge0$, and $\mu,\nu\in\mathscr P_2(\R^d)$,
\begin{align}\label{E3}
\mathbb W_2\big(\mu P^{(\delta)}_{n\delta},\nu P^{(\delta)}_{n\delta}\big)\le C_0\e^{-\frac{1}{2}(\lambda_1\wedge\lambda_2) n\delta}\mathbb W_2(\mu,\nu),
\end{align}
where $\lambda_1,\lambda_2, C_0$ and $\delta^\star_1$ were defined in \eqref{WQ*} and \eqref{dd1}, respectively.
\end{theorem}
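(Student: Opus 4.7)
The proof proceeds by synchronous coupling combined with an equivalent quasi-metric. I would let $(X_0^{\delta},Y_0^{\delta})$ be an optimally coupled pair realising $\W_2(\mu,\nu)$, and propagate each marginal via \eqref{E2} driven by the same Brownian increments. Then $D_n := X_{n\delta}^{\delta}-Y_{n\delta}^{\delta}$ is $\mathscr F_{n\delta}$-measurable, since the noise cancels in the difference. Expanding $|D_{n+1}|^2$ and splitting according to the dissipative event $\mathcal D_n := \{|X_{n\delta}^{\delta}|>R\}\cup\{|Y_{n\delta}^{\delta}|>R\}$, the bounds in $({\bf H}_1)$ together with $|\pi^{(\delta)}(x)-\pi^{(\delta)}(y)|\le|x-y|$ from $({\bf A}_1)$ yield, for $\delta\in(0,\delta_1^\star]$, the pointwise inequalities $|D_{n+1}|^2\le (1-K_R^*\delta)|D_n|^2$ on $\mathcal D_n$ and $|D_{n+1}|^2\le (1+3C_R\delta)|D_n|^2$ on $\mathcal D_n^c$; the precise bounds on $\delta$ in \eqref{dd1} are what is needed to absorb the second-order term $K_R^2\delta^{2-2\theta}$ into $K_R^*\delta$ and to ensure (via $({\bf H}_2)$) that $\pi^{(\delta)}$ acts as the identity on the compact regions that matter.

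To offset the expansion on $\mathcal D_n^c$, I would introduce an equivalent quasi-metric of the form $\rho^2(x,y) := |x-y|^2\bigl(1+\alpha V(x,y)\bigr)$, where $V:\R^d\times\R^d\to[0,1]$ is a smooth Lyapunov-type weight concentrated on $\{|x|\vee|y|\le R_0\}$ (with $R_0$ as in $({\bf H}_3)$) and decaying away from that set, and $\alpha=C_0^2-1$ is of order $(\si^2 d)^{-1}$, so that $|x-y|^2\le\rho^2(x,y)\le C_0^2|x-y|^2$. Since $|D_{n+1}|^2$ is $\mathscr F_{n\delta}$-measurable, the conditional one-step evolution factorises as $\E\bigl[\rho^2(X_{(n+1)\delta}^{\delta},Y_{(n+1)\delta}^{\delta})\mid\mathscr F_{n\delta}\bigr] = |D_{n+1}|^2\bigl(1+\alpha\,\E[V(X_{(n+1)\delta}^{\delta},Y_{(n+1)\delta}^{\delta})\mid\mathscr F_{n\delta}]\bigr)$, and the central task reduces to proving a Foster--Lyapunov drift estimate of the form $\E[V(X_{(n+1)\delta}^{\delta},Y_{(n+1)\delta}^{\delta})\mid\mathscr F_{n\delta}]\le (1-c_1\delta)V(X_{n\delta}^{\delta},Y_{n\delta}^{\delta})+c_2\delta\,\I_{\mathcal D_n^c}$, obtained by a discrete Taylor expansion of $V$ against the Gaussian increments $\si\triangle W_{n\delta}$ of variance $\si^2 d\delta$, the growth bound \eqref{EY*} of $({\bf H}_2)$, and the restriction $\delta\le\delta_{R_0}^*$ that makes $\pi^{(\delta)}$ the identity on $B(0,R_0)$.

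Substituting this drift estimate into the conditional factorisation and matching constants, the case split of the first paragraph combined with the choice $1+\alpha=C_0^2$ delivers the one-step contraction $\E[\rho^2(X_{(n+1)\delta}^{\delta},Y_{(n+1)\delta}^{\delta})\mid\mathscr F_{n\delta}]\le\e^{-(\lambda_1\wedge\lambda_2)\delta}\rho^2(X_{n\delta}^{\delta},Y_{n\delta}^{\delta})$: on $\mathcal D_n$ the potential gain $c_2\delta$ in $V$ is killed by the factor $(1-K_R^*\delta)$, whereas on $\mathcal D_n^c$ one has $V(X_{n\delta}^{\delta},Y_{n\delta}^{\delta})\approx 1$ and the drop $c_1\alpha\delta$ in $V$ absorbs the expansion $3C_R\delta$ precisely when $\si\ge\si_0$ from $({\bf H}_3)$. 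Iterating via the tower property, then applying $|x-y|^2\le\rho^2(x,y)\le C_0^2|x-y|^2$ (the lower bound at $n\delta$, the upper bound at $0$) and taking the square root produces \eqref{E3}, with the factor $\tfrac12$ in the exponent arising from this final square-root step. The main technical obstacle is the construction of $V$ and the verification of its drift inequality: because the Gaussian one-step escape probability from $B(0,R_0)$ is only of order $\si^2 d\delta/R_0^2$, the scaling $\alpha\asymp(\si^2 d)^{-1}$ is forced, which in turn dictates the explicit formulas for $\lambda_1$, $C_0$ and $\si_0$ in \eqref{WQ*} and the precise form of the noise lower bound in $({\bf H}_3)$.
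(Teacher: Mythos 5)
Your proposal follows essentially the same route as the paper's proof: synchronous coupling, a quasi-metric $\rho(x,y)=|x-y|^2\big(\si^2+V(x)+V(y)\big)$ with a compactly supported radial weight $V$ (the paper builds $V(x)=\varphi_{\alpha_0,\beta_0}(|x|)$ with $\varphi_{\alpha_0,\beta_0}$ piecewise quadratic, and the drift estimate is the separate Lemma~\ref{pro} rather than a joint weight $V(x,y)$, but these are cosmetic), the two-region case split $\mathcal D_n$ vs.\ $\mathcal D_n^c$ for the deterministic one-step difference, the Foster--Lyapunov-type conditional drift inequality for $V$ established via Jensen's inequality (exploiting concavity of $\varphi_{\alpha_0,\beta_0}(r)-\alpha_0 r^2$) and Gaussian tail estimates rather than a Taylor expansion, and the final comparison $\si^2|x-y|^2\le\rho(x,y)\le(\si^2+2\|V\|_\infty)|x-y|^2$ plus the square root giving the $\tfrac12$ in the exponent and $C_0=\sqrt{1+2\|V\|_\infty/\si^2}$. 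One small caveat: the drift inequality you write, $\E[V|\mathscr F_{n\delta}]\le(1-c_1\delta)V+c_2\delta\,\I_{\mathcal D_n^c}$, is not what your own prose (or the paper) actually needs --- the correct sign structure is an \emph{additive drop} on $\mathcal D_n^c$ and an \emph{additive gain} bounded by $O(K_R^*\si^2\delta)$ on $\mathcal D_n$, i.e.\ of the form $\E[V|\mathscr F_{n\delta}]\le V - 3C_R\si^2\delta\,\I_{\mathcal D_n^c}+\tfrac{3K_R^*}{8}\si^2\delta\,\I_{\mathcal D_n}$; your surrounding narration (drop absorbs $3C_R\delta$ on $\mathcal D_n^c$, gain killed by $1-K_R^*\delta$ on $\mathcal D_n$) is correct, so this is a transcription slip rather than a conceptual gap.
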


In the sequel, we present three lemmas, which, on the one hand,  provide explicit conditions to examine $({\bf A}_1)$, $({\bf H}_1)$ and $({\bf H}_2)$, and, on the other hand, will play a crucial role in the proof of Theorem \ref{thm-0}.

\begin{lemma}\label{lem0}
$({\bf A}_1)$ holds true for $\pi^{(\delta)}={\rm Id}$ and $\pi^{(\delta)}$ defined in \eqref{ET-}, respectively.
\end{lemma}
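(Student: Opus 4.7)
The lemma separates into two independent verifications, and both are elementary. For $\pi^{(\delta)}={\rm Id}$, the assertion is immediate: the identity satisfies ${\rm Id}({\bf 0})={\bf 0}$ and $|{\rm Id}(x)-{\rm Id}(y)|=|x-y|$, so $({\bf A}_1)$ holds with equality. All the content of the lemma is therefore in the second case.

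For $\pi^{(\delta)}$ defined in \eqref{ET-}, I would set $r_\dd:=\varphi^{-1}(\dd^{-\gamma})\ge 0$ and rewrite the map in the explicit form
\begin{equation*}
\pi^{(\dd)}(x)=\begin{cases} x, & |x|\le r_\dd,\\ r_\dd\,x/|x|, & |x|> r_\dd,\end{cases}\qquad \pi^{(\dd)}({\bf 0})={\bf 0}.
\end{equation*}
This is exactly the Euclidean metric projection onto the closed Euclidean ball $\bar B({\bf 0},r_\dd)$. Since that ball is a closed convex subset of the Hilbert space $\R^d$, the classical projection theorem yields the nonexpansiveness $|\pi^{(\dd)}(x)-\pi^{(\dd)}(y)|\le |x-y|$ for all $x,y\in\R^d$, together with $\pi^{(\dd)}({\bf 0})={\bf 0}$, which is precisely $({\bf A}_1)$.

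For a self-contained argument I would verify the contraction by splitting into three regions. When $|x|,|y|\le r_\dd$ one gets equality. When $|x|>r_\dd\ge|y|$, a direct expansion gives
\begin{equation*}
|x-y|^2-|\pi^{(\dd)}(x)-y|^2=(|x|-r_\dd)\bigl(|x|+r_\dd-2\langle x,y\rangle/|x|\bigr),
\end{equation*}
and both factors are nonnegative because Cauchy--Schwarz with $|y|\le r_\dd$ gives $\langle x,y\rangle/|x|\le|y|\le r_\dd$. When $|x|,|y|>r_\dd$, expanding $|x/|x|-y/|y||^2=2-2\langle x,y\rangle/(|x||y|)$ and applying $\langle x,y\rangle\le|x||y|$ to the term $-2\langle x,y\rangle(|x||y|-r_\dd^2)/(|x||y|)$ leads, after simplification, to
\begin{equation*}
|x-y|^2-|\pi^{(\dd)}(x)-\pi^{(\dd)}(y)|^2\ge(|x|-|y|)^2\ge 0.
\end{equation*}

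There is no real obstacle here; the only care needed is the sign-analysis in the mixed case, which is handled by the inequality $\langle x,y\rangle\le|x|\,|y|\le r_\dd|x|$. Either route (invoking the Hilbert-space projection theorem or the three-case computation) closes the proof in a couple of lines.
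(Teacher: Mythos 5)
Your proof is correct, and it offers two routes. The explicit three-case computation in your ``self-contained'' argument is essentially the paper's proof in lightly rearranged form: the paper first reduces $|x-y|^2-|\pi^{(\delta)}(x)-\pi^{(\delta)}(y)|^2$ to a scalar quantity $\Lambda(x,y,\delta)$ depending only on $|x|$, $|y|$, and $r_\delta:=\varphi^{-1}(\delta^{-\gamma})$ by applying Cauchy--Schwarz once up front (using that $\pi^{(\delta)}(x)$ and $x$ are parallel), then checks nonnegativity of $\Lambda$ in the same three regimes you consider, whereas you apply Cauchy--Schwarz separately inside each case --- the algebra and the resulting factorizations, e.g.\ $(|x|-r_\delta)(|x|+r_\delta-2\langle x,y\rangle/|x|)$ in the mixed case and the reduction to $(|x|-|y|)^2$ in the exterior case, are the same. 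Your first observation, however, is genuinely different from and cleaner than what the paper does: recognizing $\pi^{(\delta)}$ as the Euclidean metric projection onto the closed ball $\bar B({\bf 0},r_\delta)$, a closed convex set, immediately yields nonexpansiveness by the Hilbert-space projection theorem, with $\pi^{(\delta)}({\bf 0})={\bf 0}$ obvious since the ball contains the origin. That one-line argument buys conceptual clarity and avoids all the case bookkeeping; the paper's elementary computation, in turn, keeps the proof self-contained and makes the exact slack $\Lambda(x,y,\delta)$ visible, which can be useful if one later needs a quantitative strengthening of the contraction.
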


\begin{proof}
It is trivial to see that $({\bf A}_1)$ is true for $\pi^{(\delta)}={\rm Id}$. For $\pi^{(\delta)}$ defined in \eqref{ET-}, note that for all $x,y\in\R^d,$
\begin{equation*}
\begin{split}
 |x-y|^2-|\pi^{(\delta)}(x)-\pi^{(\delta)}(y) |^2
&\ge |x|^2-\big(|x|\wedge \varphi^{-1}(\delta^{-\gamma})\big)^2 +|y|^2-\big(|y|\wedge \varphi^{-1}(\delta^{-\gamma})\big)^2\\
&\quad-2\big(|x|\cdot|y|-(|x|\wedge \varphi^{-1}(\delta^{-\gamma}))(|y|\wedge \varphi^{-1}(\delta^{-\gamma})\big)\\
&=:\Lambda(x,y,\delta).
\end{split}
\end{equation*}
Whence, it is sufficient to verify $\Lambda(x,y,\delta)\ge0$ in order to show the validity of $({\bf A}_1)$. Obviously, $\Lambda(x,y,\delta)=0$
for $x,y\in\R^d$ with $|x|\vee|y|\le \varphi^{-1}(\delta^{-\gamma})$. Next, for   $x,y\in\R^d$ with $|x|\wedge|y|\ge \varphi^{-1}(\delta^{-\gamma})$,
it follows that $\Lambda(x,y,\delta)=(|x|-|y|)^2\ge0.$ Furthermore, for  $x,y\in\R^d$  with $|x|\le\varphi^{-1}(\delta^{-\gamma})$ and $|y|\ge\varphi^{-1}(\delta^{-\gamma})$, we find that
\begin{align*}
\Lambda(x,y,\delta)=\big(|y| + \varphi^{-1}(\delta^{-\gamma}) -2|x|\big)\big( |y|-  \varphi^{-1}(\delta^{-\gamma})\big)\ge0.
\end{align*}
Thus, by interchanging $x$ and $y$, we can conclude that $\Lambda(x,y,\delta)\ge0$ for $x,y\in\R^d$ with $|x|\ge\varphi^{-1}(\delta^{-\gamma})$
or $|y|\ge\varphi^{-1}(\delta^{-\gamma})$.
\end{proof}

\begin{lemma}\label{lem1}
Under $({\bf A}_0)$, $({\bf A}_2)$ and $({\bf A}_3)$, for  $b^{(\delta)}$ given in \eqref{EY-} and $\pi^{(\delta)}={\mbox Id}$,
$({\bf H}_1)$ holds  with
\begin{align*}
R=R^*,~ C_R= (L_0+ L_1 )(1+R^{\ell_0})^2,~ K_R=L_0+L_1,~   K_R^*=(L_{2 }/2)\wedge L_{4 },
\end{align*}
$\theta=\gamma$ and $ \delta_R=(L_{2}/(2L_{3}))^{\frac{1}{\gamma}}\wedge1$.
Moreover,  under $({\bf A}_0)$ and  $({\bf A}_4)$,  for  $b^{(\delta)}=b$  and  $\pi^{(\delta)} $ defined in \eqref{ET-},
$({\bf H}_1)$ is satisfied  with
\begin{align*}
R=R_*, ~ C_R=2L_0\varphi(R),~K_R=2L_0, ~ K_R^*=L_5,~\theta=\gamma,~\delta_R=\varphi(R)^{-\frac{1}{\gamma}}.
\end{align*}
\end{lemma}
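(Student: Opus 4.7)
The plan is to verify the two displays \eqref{E0} and \eqref{E*} separately in each of the two regimes. The regimes are qualitatively different: in Part 1 the projection $\pi^{(\delta)}={\rm Id}$ is trivial and the burden falls on the tamed drift $b^{(\delta)}$; in Part 2 the roles swap, with $b^{(\delta)}=b$ and the truncation $\pi^{(\delta)}$ doing the work, so I would also lean on the contraction $|\pi^{(\delta)}(x)-\pi^{(\delta)}(y)|\le|x-y|$ from Lemma \ref{lem0}.

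For Part 1, I would start from the common-denominator identity
\[
b^{(\dd)}(x)-b^{(\dd)}(y)=\frac{\big(b(x)-b(y)\big)+\delta^\gamma\big(b(x)|y|^{\ell_0}-b(y)|x|^{\ell_0}\big)}{(1+\delta^\gamma|x|^{\ell_0})(1+\delta^\gamma|y|^{\ell_0})}.
\]
Bounding the first numerator term by $({\bf A}_0)$ and the second by $({\bf A}_2)$, and using $\delta^\gamma\le 1$ to absorb the $\delta^\gamma$ in front of the first three sub-terms of $({\bf A}_2)$, one gets a numerator controlled by $[(L_0+L_1)(1+|x|^{\ell_0}+|y|^{\ell_0})+\delta^\gamma L_1|x|^{\ell_0}|y|^{\ell_0}]|x-y|$. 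For $|x|,|y|\le R$ the denominator is at least $1$ and the bracket is at most $(L_0+L_1)(1+R^{\ell_0})^2$, giving $C_R$. For $|x|>R$ or $|y|>R$ I would split the quotient by the inequality $(A_1+A_2)/(B_1+B_2)\le A_1/B_1+A_2/B_2$ against the denominator $1+\delta^\gamma(|x|^{\ell_0}+|y|^{\ell_0})+\delta^{2\gamma}|x|^{\ell_0}|y|^{\ell_0}$; each resulting quotient has exactly one extra factor of $\delta^{-\gamma}$, which produces the bound $K_R\delta^{-\gamma}|x-y|$.

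Still in Part 1, the inner-product version for \eqref{E*} is obtained by taking $\<x-y,\cdot\>$ of the same identity and invoking $({\bf A}_3)$ on the set $\{|x|>R^*\}\cup\{|y|>R^*\}$. Summing the two bounds of $({\bf A}_3)$, the numerator becomes
\[
-\big(L_2-\delta^\gamma L_3\big)\big(1+|x|^{\ell_0}+|y|^{\ell_0}\big)|x-y|^2-\delta^\gamma L_4|x|^{\ell_0}|y|^{\ell_0}|x-y|^2.
\]
Choosing $\delta\le(L_2/(2L_3))^{1/\gamma}$ ensures $L_2-\delta^\gamma L_3\ge L_2/2$, so the whole numerator is at most $-((L_2/2)\wedge L_4)\bigl[(1+|x|^{\ell_0}+|y|^{\ell_0})+\delta^\gamma|x|^{\ell_0}|y|^{\ell_0}\bigr]|x-y|^2$. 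Comparing term-by-term with the denominator, and using $\delta^\gamma\le 1$ so that $\delta^{2\gamma}|x|^{\ell_0}|y|^{\ell_0}\le\delta^\gamma|x|^{\ell_0}|y|^{\ell_0}$, the ratio is at most $-((L_2/2)\wedge L_4)|x-y|^2$, delivering $K_R^*=(L_2/2)\wedge L_4$ uniformly in $\delta$.

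For Part 2 with $\pi^{(\delta)}$ from \eqref{ET-}, let $M:=\varphi^{-1}(\delta^{-\gamma})$, so $M^{\ell_0}=\delta^{-\gamma}-1$. For $|x|,|y|\le R=R_*$ and $\delta\le \varphi(R)^{-1/\gamma}$ one has $M\ge R$, hence $\pi^{(\delta)}(x)=x$, $\pi^{(\delta)}(y)=y$, and $({\bf A}_0)$ directly gives $|b(x)-b(y)|\le L_0(1+2R^{\ell_0})|x-y|\le 2L_0\varphi(R)|x-y|$. For $|x|>R$ or $|y|>R$, the global bound $|\pi^{(\delta)}(z)|\le M$ yields $|\pi^{(\delta)}(z)|^{\ell_0}\le \delta^{-\gamma}-1$, so $1+|\pi^{(\delta)}(x)|^{\ell_0}+|\pi^{(\delta)}(y)|^{\ell_0}\le 2\delta^{-\gamma}-1$; combining $({\bf A}_0)$ with the contraction of $\pi^{(\delta)}$ gives the bound $2L_0\delta^{-\gamma}|x-y|$. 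For \eqref{E*}, the key observation is that once $\delta\le\varphi(R_*)^{-1/\gamma}$ one has $M\ge R_*$, so $|x|>R_*$ forces $|\pi^{(\delta)}(x)|=|x|\wedge M\ge R_*$; thus $({\bf A}_4)$ applies to $\pi^{(\delta)}(x),\pi^{(\delta)}(y)$ and directly yields $K_R^*=L_5$.

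The main obstacle is not any single inequality but the careful bookkeeping in Part 1, where one must juggle three polynomial expressions (the two numerator bounds from $({\bf A}_0)$ and $({\bf A}_2)$ and the factorised denominator) and simultaneously track how the $\delta^\gamma$ and $\delta^{2\gamma}$ factors interact to expose the claimed $\delta^{-\gamma}$ singularity in the Lipschitz estimate, while still leaving a uniformly negative dissipation constant $-K_R^*$ that does \emph{not} depend on $\delta$ in the inner-product estimate.
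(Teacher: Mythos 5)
Your argument is structured exactly as the paper's: the same common-denominator identity for the tamed drift, the same use of $\delta^\gamma\le 1$ to absorb the first three sub-terms of $({\bf A}_2)$, the same term-by-term comparison of numerator against denominator in the dissipation estimate \eqref{E*}, and the same observations about the truncation map in Part 2 (that $\pi^{(\delta)}(x)=x$ on $B_R$ once $\delta\le\varphi(R)^{-1/\gamma}$, that $|\pi^{(\delta)}(z)|\le\varphi^{-1}(\delta^{-\gamma})$ forces $\varphi(|\pi^{(\delta)}(z)|)\le\delta^{-\gamma}$, and that $|x|>R_*$ implies $|\pi^{(\delta)}(x)|\ge R_*$ so $({\bf A}_4)$ applies to the truncated points). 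All of this is sound.

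The one place where your route diverges and costs you something is the off-compact Lipschitz bound in Part 1, where you propose to split the quotient via $(\sum_i A_i)/(\sum_i B_i)\le\sum_i A_i/B_i$. Applied to the four monomials of the numerator $(L_0+L_1)(1+|x|^{\ell_0}+|y|^{\ell_0})+\delta^\gamma L_1|x|^{\ell_0}|y|^{\ell_0}$ against the matching monomials of $1+\delta^\gamma|x|^{\ell_0}+\delta^\gamma|y|^{\ell_0}+\delta^{2\gamma}|x|^{\ell_0}|y|^{\ell_0}$, this gives a constant on the order of $(3L_0+4L_1)\delta^{-\gamma}$, not the claimed $K_R\delta^{-\gamma}$ with $K_R=L_0+L_1$. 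The paper avoids this loss by first upgrading $L_1\le L_0+L_1$ on the last numerator monomial, so the numerator bound factors as $(L_0+L_1)(1+r)$ with $r:=|x|^{\ell_0}+|y|^{\ell_0}+\delta^\gamma|x|^{\ell_0}|y|^{\ell_0}$, while the denominator is exactly $1+\delta^\gamma r$; one then uses the single observation that $(1+r)/(1+\delta^\gamma r)\le\delta^{-\gamma}$ whenever $\delta^\gamma\le 1$ (immediate by cross-multiplying $\delta^\gamma(1+r)\le 1+\delta^\gamma r$, or equivalently by monotonicity of the map $r\mapsto(1+r)/(1+\delta^\gamma r)$ with limit $\delta^{-\gamma}$). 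The same monotonicity is what gives the compact-set constant $C_R$, since $(1+r)/(1+\delta^\gamma r)\le 1+r\le(1+R^{\ell_0})^2$ when $|x|,|y|\le R$. Because $K_R$ enters $\delta_1^\star$ through $(K_R^*/K_R^2)^{1/(1-2\theta)}$, the inflated constant from the split-quotient approach would shrink the admissible step-size range, so it is not purely cosmetic; replacing the split-quotient step by the cross-multiplication bound $(1+r)/(1+\delta^\gamma r)\le\delta^{-\gamma}$ is the one adjustment needed to land on the exact constants claimed in the lemma. The rest of your proposal requires no change.
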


\begin{proof}
 For the case that  $b^{(\delta)}$ is given in \eqref{EY-} and $\pi^{(\delta)}={\mbox Id}$, we take $R=R^*$, $ \theta=\gamma$ and $\delta_R=(L_{2}/(2L_{3}))^{\frac{1}{\gamma}}\wedge1$. Let $B_R=\{x:|x|\le R\}$ and $B_R^c$ be the corresponding  complement.
 From (${\bf A}_0$) and (${\bf A}_2$),
 we infer   that for   any   $x,y\in\R^d $ and $\delta\in(0,1],$
\begin{align*}
\big|b^{(\delta)}(x)-b^{(\delta)}(y)\big|
&\le\frac{ (L_0+L_1)  ( 1+ |x|^{\ell_0}+|y|^{\ell_0} + \delta^\gamma |x|^{\ell_0}|y|^{\ell_0} )|x-y|}{1+\delta^\gamma(|x|^{\ell_0}+|y|^{\ell_0} +\delta^{ \gamma}|x|^{\ell_0} |y|^{\ell_0})} \\
&\le (L_0+L_1)\big((1+ R^{\ell_0})^2\I_{\{x,y\in B_R\}}+\delta^{-\gamma}\I_{\{x\in B_R^c\}\cup\{y\in B_R^c\}}\big)|x-y|,
\end{align*}
where in the second inequality we took advantage of the fact that $[0,\8)\ni r\mapsto (1+r)/(1+\delta^\gamma r)$ is increasing in case of  $\delta\in(0,1]$.
Accordingly, \eqref{E0}  is satisfied  for the setting  $\pi^{(\delta)}=\mbox{Id} $. Next,  $({\bf A}_3)$ enables us to derive that
 for all  $x,y\in\R^d$  with $x\in B_R^c$  or $y\in B_R^c$,
\begin{align*}
\<x-y,b^{(\delta)}(x)-b^{(\delta)}(y)\>
&\le -\frac{\big( (L_{2 }-L_{3 }\delta^\gamma)(1+|x|^{\ell_0}+|y|^{\ell_0}) +L_{4 }\delta^{\gamma}  |x|^{\ell_0}|y|^{\ell_0} \big) |x-y|^2 }{1+\delta^\gamma(|x|^{\ell_0}+|y|^{\ell_0} +\delta^{ \gamma}|x|^{\ell_0} |y|^{\ell_0})}\\
&\le  -\big(  (L_{2 }/2)\wedge L_{4 }\big)|x-y|^2,
\end{align*}
where the second inequality holds true by exploiting  $L_3\delta^\gamma\le L_2/2$ for $\delta\in(0,\delta_R]$
and making use of the fact that $[0,\8)\ni r\mapsto (1+r)/(1+\delta^\gamma r)$, $\delta\in(0,\delta_R]$,  is increasing once more. As a consequence, we conclude that \eqref{E*} is reached.

Concerning the setting that $b^{(\delta)}=b$  and  $\pi^{(\delta)} $ is defined in \eqref{ET-}, we choose $R=R_*$, $  \theta=\gamma$ and $\delta_R=\varphi(R)^{-\frac{1}{\gamma}}$. By means of $({\bf A}_0)$, we deduce that  for any $\delta>0$, and $x,y\in\R^d,$
\begin{align*}
\big|b(\pi^{(\delta)}(x))-b(\pi^{(\delta)}(y))\big|&\le L_0\big(1+(|x|\wedge\varphi^{-1}(\delta^{-\gamma}))^{\ell_0}+(|y|\wedge\varphi^{-1}(\delta^{-\gamma}))^{\ell_0}\big)|x-y|\\
&\le   2L_0\big(\varphi(R)\I_{\{x,y\in B_R\}}+  \delta^{-\gamma}\I_{\{x\in B_R^c\} \cup\{y\in B_R^c\}}\big)|x-y|,
\end{align*}
where in the first inequality we used $|\pi^{(\delta)}(x)|=|x|\wedge\varphi^{-1}(\delta^{-\gamma})$ and the contractive property of $\pi^{(\delta)}$ (see Lemma \ref{lem0}), and in the second inequality we made use of $\varphi(|x|\wedge\varphi^{-1}(\delta^{-\gamma}))\le \delta^{-\gamma}$. Consequently, \eqref{E0}  is examinable. Next,  note that for
 $x\in\R^d$ with $x\in B^c_{R}$ and $\delta\in(0,\delta_R]$,
 \begin{align*}
|\pi^{(\delta)}(x)|=|x|\wedge \varphi^{-1}(\delta^{-\gamma})\ge|x|\wedge \varphi^{-1}(\varphi(R)) \ge R.
\end{align*}
This, together with $({\bf A}_4)$, implies that   for any $x,y\in\R^d$ with $x\in B_{R}^c$ or $y\in B_{R}^c,$
\begin{align*}
\big\< \pi^{(\delta)}(x)-\pi^{(\delta)}(y), b(\pi^{(\delta)}(x))-b(\pi^{(\delta)}(y))\big\>\le-L_5 \big|\pi^{(\delta)}(x)-\pi^{(\delta)}(y)\big|^2.
\end{align*}
Correspondingly,    \eqref{E*}  is also valid.
\end{proof}

\begin{lemma}\label{lem2}
For $\pi^{(\delta)}=\mbox{ Id}$ and $\pi^{(\delta)} $ defined in \eqref{ET-}, the hypothesis  \eqref{EY} holds true with $  \delta_r^*=1, r\ge0, $
and $ \delta_r^*=\varphi(r)^{-\frac{1}{\gamma}}$, $r\ge0, $ respectively. Furthermore, under Assumption $({\bf A}_0)$, for $(i)$ $b^{(\delta)}$ given in \eqref{EY-} and $\pi^{(\delta)}={\mbox Id}$ as well as  $(ii)$ $b^{(\delta)}=b$ and $\pi^{(\delta)} $ defined in \eqref{ET-},
  \eqref{EY*} is fulfilled  for
 \begin{align*}
\psi_1(r)=L_0 (r^{\ell_0}+1)r+|b({\bf 0})| \quad \mbox{ and } \quad  \psi_2(r)\equiv L_0,\quad r\ge0.
 \end{align*}
\end{lemma}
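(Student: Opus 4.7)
The plan is to verify each piece of the statement by direct computation, leaning on two elementary algebraic inequalities and the growth bound $({\bf A}_0)$.

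For the hypothesis \eqref{EY}: the case $\pi^{(\delta)}={\rm Id}$ is immediate with $\delta_r^*=1$ because $|\pi^{(\delta)}(x)|=|x|$ holds verbatim. For the truncation map defined in \eqref{ET-}, I would use that $|\pi^{(\delta)}(x)|=|x|\wedge \varphi^{-1}(\delta^{-\gamma})$ together with the monotonicity equivalence $\delta\le \varphi(r)^{-1/\gamma}\Leftrightarrow \varphi^{-1}(\delta^{-\gamma})\ge r$. Under this restriction, for $|x|\le r$ the minimum reduces to $|x|$ (so $|\pi^{(\delta)}(x)|=|x|$), and for $|x|\ge r$ both entries in the minimum are $\ge r$ (so $|\pi^{(\delta)}(x)|\ge r$), which is exactly what \eqref{EY} demands.

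For the growth estimate \eqref{EY*}, the baseline ingredient is $({\bf A}_0)$ applied at $y={\bf 0}$, which yields $|b(z)|\le L_0(1+|z|^{\ell_0})|z|+|b({\bf 0})|$ for all $z\in\R^d$. In case (i), where $\pi^{(\delta)}={\rm Id}$ and $b^{(\delta)}$ is given by \eqref{EY-}, the bound $|b^{(\delta)}(x)|\le |b(x)|$ immediately produces $|b^{(\delta)}(x)|\le L_0(1+r^{\ell_0})r+|b({\bf 0})|=\psi_1(r)$ for $|x|\le r$. When $|x|>r$, the key step is the sharp elementary inequality
$$\frac{1+|x|^{\ell_0}}{1+\delta^\gamma|x|^{\ell_0}}\le \delta^{-\gamma},\qquad \delta\in(0,1],$$
which follows from $\delta^{-\gamma}+|x|^{\ell_0}\ge 1+|x|^{\ell_0}$. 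Inserting this into the defining formula for $b^{(\delta)}$ gives $|b^{(\delta)}(x)|\le L_0\delta^{-\gamma}|x|+|b({\bf 0})|$, hence $\le \psi_1(r)+L_0\delta^{-\gamma}|x|$ since $|b({\bf 0})|\le \psi_1(r)$.

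In case (ii), where $b^{(\delta)}=b$ and $\pi^{(\delta)}$ is the truncation from \eqref{ET-}, set $y=\pi^{(\delta)}(x)$ and apply the baseline bound to $b(y)$. For $|y|\le r$ it again produces $\psi_1(r)$. For $|y|>r$, the truncation forces $|y|\le \varphi^{-1}(\delta^{-\gamma})$, whence $1+|y|^{\ell_0}=\varphi(|y|)\le \delta^{-\gamma}$; this is the mirror image of the sharp inequality used in case (i) and yields $L_0(1+|y|^{\ell_0})|y|\le L_0\delta^{-\gamma}|y|$, so that $|b(y)|\le L_0\delta^{-\gamma}|\pi^{(\delta)}(x)|+\psi_1(r)$, as required. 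None of these steps is technically deep; the only substantive point is isolating the inequality $(1+|x|^{\ell_0})/(1+\delta^\gamma|x|^{\ell_0})\le \delta^{-\gamma}$, since the cruder splitting that treats the $L_0|x|$ and $L_0|x|^{\ell_0+1}$ terms separately would leave a residual $L_0|x|$ and produce $2L_0$ rather than the sharp $L_0$ claimed for $\psi_2$.
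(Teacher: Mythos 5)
Your argument is correct and follows essentially the same route as the paper's proof: verify \eqref{EY} via the pointwise identity $|\pi^{(\delta)}(x)|=|x|\wedge\varphi^{-1}(\delta^{-\gamma})$ together with the threshold condition on $\delta$, and verify \eqref{EY*} by applying $({\bf A}_0)$ at $y={\bf 0}$ and bounding the ratio $(1+|x|^{\ell_0})/(1+\delta^\gamma|x|^{\ell_0})$ (resp.\ $\varphi(|\pi^{(\delta)}(x)|)$) by $\delta^{-\gamma}$. The paper phrases the key ratio bound as monotonicity of $r\mapsto(1+r)/(1+\delta^\gamma r)$, whereas you derive it from the one-line inequality $\delta^{-\gamma}+|x|^{\ell_0}\ge 1+|x|^{\ell_0}$; this is a cosmetic difference, and your observation about why the sharp combined bound (rather than splitting the linear and super-linear parts of $b$) is what yields $\psi_2\equiv L_0$ instead of $2L_0$ is accurate.
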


\begin{proof}
Below, we set $ \delta_r^*:=\varphi(r)^{-\frac{1}{\gamma}},r\ge0,$ and
stipulate $\delta\in(0,  \dd_r^*]$.
It is ready to see that \eqref{EY} is valid for the case $\pi^{(\delta)} =\mbox {Id}$. For $\pi^{(\delta)} $ defined in \eqref{ET-},   we have $|\pi^{(\delta)}(x)|=|x|\wedge \varphi^{-1}(\delta^{-\gamma}),x\in\R^d$. Since $\varphi^{-1}(\delta^{-\gamma})\ge r$ for any $\delta\in(0, \dd_r^*]$,
  it follows readily that
 $|\pi^{(\delta)}(x)|=|x| $ for any $x\in\R^d$ with $|x|\le r$, and meanwhile
\begin{align*}
\inf_{|x|\ge r}\big|\pi^{(\delta)}(x)\big|\ge r\wedge \varphi^{-1}(\delta^{-\gamma})=r.
\end{align*}
Consequently, \eqref{EY} is  verifiable for $\pi^{(\delta)}$  in \eqref{ET-}.

In the sequel, we turn to check the validity of \eqref{EY*}.
 For $b^{(\delta)}$ given in \eqref{EY-},
 we deduce from (${\bf A}_0$)  that for any $r\ge0$, $\delta\in(0,1]$ and $x\in\R^d,$
\begin{align*}
|b^{(\delta)}(x)|&\le \frac{|b(x)-b({\bf 0})|}{1+\dd^\gamma|x|^{\ell_0}}+|b({\bf 0})|\\
&\le \ff{L_0 (|x|^{\ell_0}+1)}{1+\dd^\gamma|x|^{\ell_0}}|x|\mathds 1_{\{|x|>r\}}+L_0 (r^{\ell_0 }+1)r+|b({\bf 0})|\\
&\le L_0 \dd^{-\gamma}|x|\mathds 1_{\{|x|>r\}}+L_0 (r^{\ell_0 }+1)r+|b({\bf 0})|,
\end{align*}
in which in the last display we utilized the fact that $[0,\8)\ni(1+r)/(1+\delta^\theta r) $ is increasing.
Therefore, we conclude that \eqref{EY*} is available for $\psi_1(r)=L_0 (r^{\ell_0 }+1)r+|b({\bf 0})|$ and $\psi_2(r)\equiv L_0$.

For $\pi^{(\delta)}$ defined in \eqref{ET-},
we obtain from (${\bf A}_0$) that
for any    $r\ge0,$
\begin{align*}
\big|b(\pi^{(\delta)}(x))\big|
&\le |b({\bf 0})|+L_0\varphi(|\pi^{(\delta)}(x)|)|\pi^{(\delta)}(x)|\\
&\le |b({\bf 0})|+L_0 \varphi(r)r+L_0\varphi(|x|\wedge\varphi^{-1}(\delta^{-\gamma}))|\pi^{(\delta)}(x)|\I_{\{|\pi^{(\delta)}(x)|>r\}}\\
&\le |b({\bf 0})|+L_0\varphi(r)r+ L_0 \dd^{-\gamma}|\pi^{(\delta)}(x)|\I_{\{|\pi^{(\delta)}(x)|>r\}},
\end{align*}
where in the second inequality we used $|\pi^{(\delta)}(x)|=|x|\wedge\varphi^{-1}(\delta^{-\theta})$.  As a result, \eqref{EY*} is also provable for  $b^{(\delta)}=b$ and $\pi^{(\delta)} $ defined in \eqref{ET-}.
\end{proof}

Before the proof of Theorem \ref{thm}, there are still some   warm-up materials to be prepared.
For $\alpha_0:=\frac{K_R^*}{8d}$, $\beta_0:=\frac{96 C_R}{K_R^*}$, and $R>0$  given in $({\bf H}_1)$,   we define the $C^1$-function $[0,\8)\ni r\mapsto \varphi_{\alpha_0,\beta_0}(r)$ as below:
\begin{equation}\label{EW}
\varphi_{\alpha_0,\beta_0}(r)=
\begin{cases}
\alpha_0 \beta_0(1+\beta_0)(1 +R)^2-\alpha_0 \beta_0 r^2,~ \quad\quad 0\le r\le 1 +R,\\
\alpha_0\big(r-(1+\beta_0)(1 +R)\big)^2,\quad  \quad\quad\quad\quad  1 +R<r\le (1+\beta_0)(1 +R),\\
0,\qquad\qquad\qquad \qquad\qquad \qquad\quad \quad\quad~~  r>(1+\beta_0)(1 +R).
\end{cases}
\end{equation}

The following lemma shows that, under Assumption (${\bf H}_2$),  $(X_{n\delta}^\delta)_{n\ge0}$ determined by \eqref{E2} satisfies a Lyapunov type condition in the semigroup form. More precisely, we have the statement below.

\begin{lemma}\label{pro}
Assume that $({\bf H}_{2})$ is  satisfied, and suppose further
\begin{align}\label{EE4-}
\si^2\ge  \frac{8}{d}(1+ R) \big ( \psi_1( R)  \vee (12C_R  \psi_1(R_0)/K_R^* )\big).
\end{align}
Then, for   any   $\delta\in(0,\delta_1^\star]$ and $n\ge0$,
\begin{align}\label{WE-}
\E\big(V(X_{(n+1)\delta}^\delta)\big|\mathscr F_{n\delta}\big)\le V(X_{n\delta}^\delta)-\Big(3C_R\I_{\{|X_{n\delta}^\delta|\le R\}}-\frac{3K_R^*}{8}\I_{\{|X_{n\delta}^\delta|>R\}}\Big)\si^2\delta,
\end{align}
 where the radial function $V$ is defined by $V(x) =\varphi_{\alpha_0,\beta_0}(|x|), x\in\R^d$.
\end{lemma}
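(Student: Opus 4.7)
The plan is to express the one-step conditional expectation of $V$ via a heat-semigroup representation, to bound the radial Laplacian of $V$ piecewise, and then to split on whether $|X^\delta_{n\delta}|\leq R$ or $|X^\delta_{n\delta}|>R$. Denote $X:=X^\delta_{n\delta}$, $X':=X^\delta_{(n+1)\delta}$ and $\mu:=\pi^{(\delta)}(X)+b^{(\delta)}(\pi^{(\delta)}(X))\delta$, so that $X'$ conditional on $\mathscr F_{n\delta}$ has the law of $\mu+B_{\sigma^2\delta}$ for an independent standard Brownian motion $B$. The radial function $V(x)=\varphi_{\alpha_0,\beta_0}(|x|)$ is only $C^{1,1}$ (with Hessian jumps across the radii $|x|=1+R$ and $|x|=(1+\beta_0)(1+R)$), so after mollifying $V$, applying It\^o's formula to the mollified function, and passing to the limit, I would obtain
\[ E\bigl[V(X')\,\big|\,\mathscr F_{n\delta}\bigr] \;=\; V(\mu) + \tfrac{1}{2}\int_0^{\sigma^2\delta} E\bigl[\Delta V(\mu+B_s)\,\big|\,\mathscr F_{n\delta}\bigr]\,ds. \]

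Using the radial formula $\Delta V(x)=\phi''(|x|)+(d-1)\phi'(|x|)/|x|$ with $\phi=\varphi_{\alpha_0,\beta_0}$ and computing piecewise, I would show: $\Delta V\equiv -2d\alpha_0\beta_0=-24C_R$ on $\{|x|\leq 1+R\}$; $\Delta V\leq 2\alpha_0=K_R^*/(4d)$ on the annulus $\{1+R<|x|\leq(1+\beta_0)(1+R)\}$; and $\Delta V\equiv 0$ beyond. In summary,
\[ \Delta V(x)\;\leq\; -24 C_R\,\mathbf{1}_{\{|x|\leq 1+R\}}+2\alpha_0\,\mathbf{1}_{\{|x|>1+R\}}. \]

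On $\{|X|\leq R\}$, $({\bf H}_2)$ with $r=R$ gives $\pi^{(\delta)}(X)=X$ and $|b^{(\delta)}(X)|\leq\psi_1(R)$, so $|\mu-X|\leq\psi_1(R)\delta$ and the constraints embedded in $\delta_1^\star$ force $|\mu|\leq 1+R$; hence $V(\mu)-V(X)$ reduces to the purely quadratic correction $-\alpha_0\beta_0(2\langle X,b^{(\delta)}(X)\rangle\delta+|b^{(\delta)}(X)|^2\delta^2)=O(\delta)$, and the Chebyshev-type bound $P(|\mu+B_s|>1+R)\leq 4sd$ yields $\tfrac{1}{2}\int_0^{\sigma^2\delta}E\Delta V(\mu+B_s)\,ds\leq -12C_R\sigma^2\delta+O(d\sigma^4\delta^2)$. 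The hypothesis $\sigma^2\geq 8(1+R)\psi_1(R)/d$ from \eqref{EE4-} together with $\delta\sigma^2\leq (4(2+d)(1+R^2))^{-1}$ from $\delta_1^\star$ then absorb the drift correction and the quadratic remainder within $9C_R\sigma^2\delta$, yielding the target bound $-3C_R\sigma^2\delta$. On $\{|X|>R\}$, the uniform bound $\Delta V\leq 2\alpha_0$ gives $\tfrac{1}{2}\int_0^{\sigma^2\delta}E\Delta V(\mu+B_s)\,ds\leq K_R^*\sigma^2\delta/(8d)$, and for $V(\mu)-V(X)$ I would exploit that $V$ is supported inside $\{|x|\leq(1+\beta_0)(1+R)\}\subset\{|x|\leq R_0/2\}$ and is globally Lipschitz with constant $\leq 2\alpha_0\beta_0(1+R)$, then sub-split into $\{R<|X|\leq R_0\}$ and $\{|X|>R_0\}$ (the latter has $V(X)=0$, and the choice of $R_0$ in \eqref{EE4-} together with the $\delta^{-\theta}$-control encoded in $\delta_1^\star$ keep $V(\mu)$ small), producing $V(\mu)-V(X)\leq O(\sigma^2\delta)$ with a constant absorbable into $(3K_R^*/8-K_R^*/(8d))\sigma^2\delta$.

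The main obstacle will be this second case: because $\pi^{(\delta)}$ can truncate $X$ by an arbitrarily large amount and $|b^{(\delta)}|$ may grow like $\delta^{-\theta}$, the displacement $|\mu-X|$ is not automatically $O(\delta)$. The specific constants appearing in $\delta_1^\star$ (in particular the factors $(K_R^*/K_R^2)^{1/(1-2\theta)}$, $(2\psi_2(1))^{1/(\theta-1)}$ and $\delta^*_{R_0}$) and the definition of $R_0$ in \eqref{EE4-} are engineered precisely to contain these $\delta^{-\theta}$ effects and to ensure that every perturbative error term is strictly dominated by the main $\sigma^2\delta$ contributions furnished by the Laplacian estimate.
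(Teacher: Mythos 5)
Your plan takes a genuinely different route from the paper. The paper first decouples drift and noise by a mean-value/Lipschitz bound on $\varphi_{\alpha_0,\beta_0}$ (shifting from $\hat x^\delta+\si W_\delta$ to $\pi^{(\delta)}(x)+\si W_\delta$ at a cost of $2\alpha_0\beta_0(1+R)|b^{(\delta)}(\pi^{(\delta)}(x))|\delta$), then handles the Gaussian step by two different devices: Jensen's inequality applied to the concave non-increasing function $h(r)=\varphi_{\alpha_0,\beta_0}(r)-\alpha_0 r^2$ for $|x|>R$, and, for $|x|\le R$, the quadratic upper bound \eqref{EP} together with a quartic-moment tail estimate. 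You instead write the one-step expectation via the heat semigroup and bound the radial Laplacian $\Delta V$ piecewise, then do a separate drift correction on $V(\mu)-V(X)$. Your Laplacian computations are correct, the three-region split ($|X|\le R$, $R<|X|\le R_0$, $|X|>R_0$) is the same as the paper's, and on $\{|X|>R\}$ your estimate is actually slightly sharper (the Laplacian cap gives the diffusive correction $\alpha_0\si^2\delta$, which is a factor $d$ smaller than the paper's Jensen bound $\alpha_0 d\si^2\delta$; both are then dominated by the same drift correction $K_R^*\si^2\delta/4$).

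There is, however, one step in the $\{|X|\le R\}$ case that does not close under the stated hypotheses as written. Your ``quadratic remainder'' from the Laplacian is $(24C_R+2\alpha_0)\cdot\tfrac12\int_0^{\si^2\delta}\P(|\mu+B_s|>1+R)\,\d s$, and since $2\alpha_0=K_R^*/(4d)$ this contains a term of size $K_R^*\si^4\delta^2/8$. Under $\si^2\delta\le\big(4(2+d)(1+R^2)\big)^{-1}$ from $\delta_1^\star$ this is $\lesssim\tfrac{K_R^*}{32(2+d)(1+R^2)}\si^2\delta$, which cannot be absorbed into the $9C_R\si^2\delta$ margin unless $K_R^*\lesssim(2+d)C_R$; neither \eqref{EE4-} nor the displayed $\delta_1^\star$ supplies such a relation. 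The paper's quadratic upper bound \eqref{EP} avoids the $K_R^*$-scale entirely, because its tail correction carries the prefactor $\alpha_0\beta_0=12C_R/d$, not $\alpha_0=K_R^*/(8d)$. In all concrete instantiations (EM, TEM, PEM) one can in fact check $K_R^*\le C_R$ (e.g.\ a continuity argument at $|x|=R$ combined with $({\bf H}_1)$ forces $-K_R^*\ge -C_R$), so your argument closes in practice; but to prove the abstract lemma as stated you either need to justify $K_R^*\le C_R$ or replace the crude uniform cap $\Delta V\le 2\alpha_0$ on the annulus by something that, like \eqref{EP}, keeps the error at the $\alpha_0\beta_0$ scale. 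A smaller point: your assertion that $\delta_1^\star$ forces $|\mu|\le 1+R$ is correct but not because of any explicit $\psi_1(R)^{-1}$ factor; it follows from $\delta\le(4(2+d)(1+R^2)\si^2)^{-1}$ together with $\si^2\ge 8(1+R)\psi_1(R)/d$, which give $\psi_1(R)\delta\le 1/32$.
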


\begin{proof}
Since Brownian motions possess  independent increments,  it is sufficient to verify  \eqref{WE-} for  the case  $n=0,$ i.e.,
\begin{align}\label{EW-}
\varphi_{\alpha_0,\beta_0}(|\hat x^\delta+\si W_{ \delta}|)\le\varphi_{\alpha_0,\beta_0}(|x|)-\Big(3C_R\I_{\{|x|\le R\}}-\frac{3}{8} K_R^*\I_{\{|x|>R\}}\Big)\si^2\delta,\quad x\in\R^d,
\end{align}
where $$  \hat x^\delta: =\pi^{(\delta)}(x)+ b^{(\delta)}(\pi^{(\delta)}(x))\delta,\quad x\in\R^d.$$

In the following analysis,  we shall fix $\delta\in(0,   \delta_1^\star]$ so that $|\pi^{(\delta)}(x)|=|x|, x\in\R^d$ (by virtue of  (${\bf H}_2$) with $r=R_0$ therein),
\begin{align}\label{EE3-}
 \psi_2(1)\delta^{1- \theta}\le \frac{1}{2}\quad \mbox{ and } \quad  2 d(2+d )\big(1+ R^2\big)\si^4\delta  \le \frac{1}{2}d\si^2.
\end{align}
Via the mean value theorem, along with $\|\varphi'_{\alpha_0,\beta_0}\|_\8=2\alpha_0\beta_0(1+R)$, we obtain  that  for  any $r\ge0$ and
 $x\in\R^d $ with $|x|\le r\le  R_0$   (given in $({\bf H}_3)$),
\begin{equation}\label{EE1-}
\begin{split}
\varphi_{\alpha_0,\beta_0}(|\hat x^\delta+\si W_{ \delta}|)&\le \varphi_{\alpha_0,\beta_0}(|\pi^{(\delta)}(x)+\si W_{ \delta}|)\\
&\quad+ 2\alpha_0\beta_0(1+R)\big||\hat x^\delta+\si W_{ \delta}|-|\pi^{(\delta)}(x)+\si W_{ \delta}|\big|\\
&\le \varphi_{\alpha_0,\beta_0}(|\pi^{(\delta)}(x)+\si W_{ \delta}|)+ 2\alpha_0\beta_0(1+R)|b^{(\delta)}(\pi^{(\delta)}(x))|\delta\\
&\le \varphi_{\alpha_0,\beta_0}(|\pi^{(\delta)}(x)+\si W_{ \delta}|)\\
&\quad+2\alpha_0\beta_0(1+R)\big(\psi_1(r) +  \psi_2(r ) \delta^{- \theta}|\pi^{(\delta)}(x)|\I_{\{|\pi^{(\delta)}(x)|>r\}}\big)\delta\\
&\le \varphi_{\alpha_0,\beta_0}(|\pi^{(\delta)}(x)+\si W_{ \delta}|)+2\alpha_0\beta_0(1+R)\psi_1(r)\delta,
\end{split}
\end{equation}
where the second inequality is available  due to the triangle inequality,  and the third inequality holds true by making  use of \eqref{EY*}, and  taking advantage of $|\pi^{(\delta)}(x)|=|x|$ for any $x\in\R^d$ with $|x|\le R_0$ and $\delta\in(0,   \delta_1^\star  ]$; see \eqref{EY} for more details.

Recall from \cite[(3.10)]{BV}   that $\R^d\ni x\mapsto f(x):=h(g(x)) $ is  concave, where $h:\R\to\R$ is   concave and non-increasing, and $g:\R^d\to\R$ is convex. Therefore, by invoking the fact  that
 $h(r):=\varphi_{\alpha_0,\beta_0}(r)-\alpha r^2, r\ge0,$ is   concave and non-increasing  and that    $g(z):=|z|,z\in\R^d,$ is convex,
 we infer from Jensen's inequality that for any $z\in\R^d,$
\begin{equation*}
 \E\varphi_{\alpha_0,\beta_0}(|z+\si W_{ \delta}|)  \le \varphi_{\alpha_0,\beta_0}(|z|)-\alpha_0 |z |^2+\alpha_0\E|z+\si W_{ \delta}|^2 =\varphi_{\alpha_0,\beta_0}(|z|)+\alpha_0\si^2d\delta,
\end{equation*}
where in the identity we used the fact that $\E \<z,W_\delta\>=0$ and $\E|W_\delta|^2=d\delta.$ The estimate above,
along with  \eqref{EE1-} and the increasing property of $[0,\8)\ni r\mapsto \psi_1(r)$, implies that
\begin{align*}
 \varphi_{\alpha_0,\beta_0}(|\hat x^\delta+\si W_{ \delta}|)
&\le \big(\varphi_{\alpha_0,\beta_0}(|\pi^{(\delta)}(x)+\si W_{ \delta}|)+2\alpha_0\beta_0(1+ R) \psi_1( R)\delta\big)
  \I_{\{|x|\le    R\}} \\
 &\quad +\big(\varphi_{\alpha_0,\beta_0}(|\pi^{(\delta)}(x)|)+\alpha_0\si^2d\delta+2\alpha_0\beta_0(1+ R) \psi_1(R_0)\delta\big)\I_{\{ R<|x|\le   R_0\}}\\
&\quad+\big(\varphi_{\alpha_0,\beta_0}(|\hat x^\delta|)+\alpha_0\si^2d\delta\big)\I_{\{ R_0<|x| \}}\\
&=:\Lambda_1(x,\delta)+\Lambda_2(x,\delta)+\Lambda_3(x,\delta).
\end{align*}

In terms of the definition of $\varphi_{\alpha_0,\beta_0},$ it follows that for any $r\ge0,$
\begin{equation}\label{EP}
\begin{split}
\varphi_{\alpha_0,\beta_0}(r)
&=\big(\alpha_0\beta_0(1+\beta_0)(1+ R)^2-\alpha_0\beta_0r^2\big) \I_{\{0\le r\le 1+ R\}}  +\varphi_{\alpha_0,\beta_0}(r)\I_{\{  r>1+ R\}}\\
&= \alpha_0\beta_0(1+\beta_0)(1+ R)^2-\alpha_0\beta_0r^2 \\
&\quad +\big(\varphi_{\alpha_0,\beta_0}(r)-\alpha_0\beta_0(1+\beta_0)(1+ R)^2+\alpha_0\beta_0r^2\big)\I_{\{  r>1+R\}}\\
&\le \alpha_0\beta_0(1+\beta_0)(1+ R)^2-\alpha_0\beta_0r^2+\alpha_0\beta_0r^2\I_{\{  r>1+ R\}},
\end{split}
\end{equation}
where in the inequality we used the fact that $\varphi_{\alpha_0,\beta_0}(r)-\alpha_0\beta_0(1+\beta_0)(1+R)^2\le0$ by virtue of
$\|\varphi_{\alpha_0,\beta_0}\|_\8=\alpha_0\beta_0(1+\beta_0)(1+R)^2$. For notation brevity, we  set
 $$
h(x): =\E\big(  |\pi^{(\delta)}(x)+\si W_{ \delta}|^2  \I_{\{|\pi^{(\delta)}(x)+\si W_{ \delta}|>1+R\}}\big),\quad x\in\R^d.$$
 Subsequently,  by invoking  the  estimate  \eqref{EP},
  we derive that for any  $x\in\R^d$ with $|x|\le  R,$
\begin{equation}\label{EE*}
\begin{split}
\E\varphi_{\alpha_0,\beta_0}(|\pi^{(\delta)}(x)+\si W_{ \delta}|)
&\le \alpha_0\beta_0(1+\beta_0)(1+ R)^2-\alpha_0\beta_0  \E|\pi^{(\delta)}(x)+\si W_\delta|^2+\alpha_0 \beta_0h(x)
\\
&=\alpha_0\beta_0(1+\beta_0)(1+ R)^2-\alpha_0\beta_0\big(|\pi^{(\delta)}(x)|^2+\si^2d\delta\big)  +\alpha_0 \beta_0h(x)
\\
&= \varphi_{\alpha_0,\beta_0}(|x|)-d\alpha_0\beta_0  \si^2\delta +\alpha_0 \beta_0h(x),
\end{split}
\end{equation}
where in the first identity we used the fact that $\E\<\pi^{(\dd)}(x), W_{ \delta}\>=0$ and   $\E|W_\dd|^2=d\delta$, and in the second identity we  utilized  $|\pi^{(\delta)}(x)|=|x|$ for any $x\in\R^d$ with $|x|\le R_0$ and $\delta\in(0, \delta_{R_0}^*]$  followed by taking advantage of the definition of    $\varphi_{\aa_0,\beta_0}$ once again.
By means of  $|\pi^{(\delta)}(x)|\le|x|$,
one apparently has
 for any $x\in\R^d$ with $|x|\le R,$
\begin{align*}
\{|\pi^{(\delta)}(x)+\si W_{ \delta}|\ge1+R\}\subseteq \{  |  W_{ \delta}|\ge 1/|\si|\}.
\end{align*}
This, together with \eqref{EY}, $|\pi^{(\delta)}(x)|\le|x|$ as well as the Chebyshev inequality,    implies that  for any $x\in\R^d$ with $|x|\le  R,$
\begin{align*}
h(x)
&\le2\big(R^2\P(|W_\delta|>1/|\si|)  +\si^2\E\big(| W_{ \delta}|^2  \I_{\{|    W_{ \delta}|> 1/|\si| \}}\big)\big)\\
&\le  2(1+R^2)\si^4\delta^2 \E|W_1|^4\\
&=2(1+R^2)d(2+d )\si^4\delta^2,
\end{align*}
where in the first inequality and in the  identity we employed the scaling property of $(W_t)_{t\ge0}$, and
  $\E |W_1|^4=2d(1+d/2),$ separately. Consequently,   by invoking  \eqref{EE3-},
  we obtain from \eqref{EE*}  that
  for any $x\in\R^d$ with $|x|\le  R,$
\begin{equation}\label{EW2}
\begin{split}
\Lambda_1(x,\delta)&\le \varphi_{\alpha_0,\beta_0}(|x|)-\alpha_0\beta_0\big (d\si^2  -  2 d(2+d )\big(1+ R^2 \big)\si^4\delta-2(1+R)\psi_1(R)  \big )\delta\\
&\le  \varphi_{\alpha_0,\beta_0}(|x|)-\frac{1}{2}\alpha_0\beta_0\big ( d    -4(1+R)\psi_1(R)/\si^2 \big)\si^2\delta\\
&\le  \varphi_{\alpha_0,\beta_0}(|x|)-\frac{1}{4}\alpha_0\beta_0  d\si^2\delta\\
&=   \varphi_{\alpha_0,\beta_0}(|x|)-3C_R\si^2\delta,
\end{split}
\end{equation}
 where the  third inequality is provable   thanks to \eqref{EE4-}, and the identity is attainable by taking the definitions of $\alpha_0,\beta_0$ into consideration.

Notice that $\alpha_0\beta_0=12C_R/d$ and $   2\alpha_0\beta_0(1+R) \psi_1(R_0)/\si^2 \le K_R^*/4$ by taking  \eqref{EE4-} into consideration. Thus,
 for any $x\in\R^d$ with $ R<|x|\le   R_0$, it follows from $|\pi^{(\delta)}(x)|=|x|$ that
\begin{equation}\label{EW3}
\begin{split}
\Lambda_2(x,\delta)&\le \varphi_{\alpha_0,\beta_0}(|x|)+\bigg(\alpha_0 d +\frac{2\alpha_0\beta_0(1+ R) \psi_1(R_0) }{\si^2}\bigg)\si^2\delta \\ &\le  \varphi_{\aa_0,\bb_0}(|x|)+\frac{3}{8} K_R^*\si^2\delta.
\end{split}
\end{equation}

From \eqref{EY*} with $r=1$ therein,
it is easy to see that for any $x\in\R^d$ and $r\ge0,$
\begin{align*}
|\hat x^\delta|\ge|\pi^{(\delta)}(x)|-| b^{(\delta)}(\pi^{(\delta)}(x))|\delta
&\ge\big(1 -\psi_2(1)\delta^{1- \theta}\big)|\pi^{(\delta)}(x)|  -\psi_1(1).
\end{align*}
Hence, \eqref{EY} and \eqref{EE3-}  imply that for any $x\in\R^d$ with $|x|\ge  R_0:=2((1+ R)(1+\beta_0)+\psi_1(1)),$
\begin{align*}
|\hat x^\delta| \ge\frac{1}{2}|\pi^{(\delta)}(x)|  -\psi_1(1)\ge\frac{1}{2}  R_0-\psi_1(1)=(1+R)(1+\beta_0).
\end{align*}
 Whereafter,  definitions of $\alpha$ and $\varphi_{\alpha_0,\beta_0}$ enable  us to derive that for any $x\in\R^d$ with $|x|>R_0,$
\begin{align}\label{EW4}
\Lambda_3(x,\delta)\le \frac{1}{8}K_R^*\si^2\delta.
\end{align}

At length, the desired assertion \eqref{EW-} follows from the estimates \eqref{EW2}, \eqref{EW3}, and \eqref{EW4}.
\end{proof}

With the aid of Proposition \ref{pro}, we move on to complete the proof of Theorem \ref{thm}.
\begin{proof}[Proof of Theorem \ref{thm}]
Below, we fix $\delta\in(0, \delta_1^\star  ]$ so that
\begin{align}\label{E9}
1-2K_R^*\delta\ge0 \quad \mbox{ and } \quad  K_R^2\delta^{2(1- \theta)}\le K_R^*\delta.
\end{align}
Note trivially that for any $n\in\mathbb N,$ $\mu,\nu\in\mathscr P_2(\R^d)$ and $\pi\in\mathscr C(\mu,\nu)$,
\begin{align*}
\mathbb W_2\big(\mu P^{(\delta)}_{n\delta},\nu P^{(\delta)}_{n\delta}\big)^2 \le \int_{\R^d\times \R^d}\mathbb W_2\big(\delta_x P^{(\delta)}_{n\delta},\delta_y P^{(\delta)}_{n\delta}\big)^2\pi(\d x,\d y).
\end{align*}
Provided that there exist  constants $C,\ll>0$ such that for all  $x,y\in\R^d,$
\begin{align}\label{E4}
 \E\big|X_{n\delta}^{\delta,x}-X_{n\delta}^{\delta,y}\big|^2\le C\e^{-\ll n\dd}|x-y|^2,
\end{align}
the assertion \eqref{E3} follows directly by taking advantage of  the basic fact:
\begin{align*}
\mathbb W_2\big(\delta_x P^{(\delta)}_{n\delta},\delta_y P^{(\delta)}_{n\delta}\big)^2\le \E\big|X_{n\delta}^{\delta,x}-X_{n\delta}^{\delta,y}\big|^2.
\end{align*}
Since the modified drift $b^{(\delta)}$ is not globally dissipative, it is impossible  to verify  \eqref{E4} directly via the classical synchronous coupling. To handle this issue, we introduce  the following distance-like function:  for any $x,y\in\R^d,$
\begin{align*}
\rho(x,y)=|x-y|^2\big(\si^2+V(x)+V(y)\big),
\end{align*}
where $\si$ is the noise intensity given in the SDE \eqref{E1}, and $V(x):=\varphi_{\alpha_0,\beta_0}(|x|), x\in\R^d, $  defined  in Lemma  \ref{pro}. Due to the alternatives of $\alpha_0,\beta_0$, it holds that
\begin{align}\label{WQ2}
\|V\|_\8 =\frac{12 C_R}{d}\Big(1+\frac{96C_R}{K_R^*}\Big)(1+R)^2.
\end{align}
 Therefore, the function  $V$
is uniformly bounded and  the quasi-distance $\rho(x,y)$ defined above is comparable to  $|x-y|^2$.  More precisely, we have
 \begin{align}\label{^7}
	\si^2|x-y|^2\le \rho(x,y)\le |x-y|^2(\si^2+2\|V\|_\8).
\end{align}
Based on this fact, along with the fundamental  inequality: $r^a\le \e^{a(r-1)}$ for all $r,a>0,$
it is sufficient to show that  for  $\delta\in(0,  \delta_1^\star  ]$ and $x,y\in\R^d,$
\begin{align}\label{E14}
\E\rho(X_{n\delta}^{\delta,x}, X_{n\delta}^{\delta,y})\le \big(1-(\lambda_1\wedge\lambda_2)  \delta\big)^n\rho(x,y)
\end{align}
in order ro achieve \eqref{E4}, where  $\lambda_1,\lambda_2>0$ were   defined in \eqref{WQ*}.

Obviously, we have
 for any $z_1,z_2\in\R^d,$
\begin{align*}
\Lambda(z_1,z_2):&=\big|\pi^{(\delta)}(z_1)-\pi^{(\delta)}(z_2)
+b^{(\delta)}(\pi^{(\delta)}(z_1))\delta-b^{(\delta)}(\pi^{(\delta)}(z_2))\delta\big|^2\\
&=\big|\pi^{(\delta)}(z_1)-\pi^{(\delta)}(z_2)\big|^2\\
&\quad+2\<\pi^{(\delta)}(z_1)-\pi^{(\delta)}(z_2),b^{(\delta)}(\pi^{(\delta)}(z_1)) -b^{(\delta)}(\pi^{(\delta)}(z_2))\>\delta\\
&\quad+\big|b^{(\delta)}(\pi^{(\delta)}(z_1)) -b^{(\delta)}(\pi^{(\delta)}(z_2)) \big|^2\delta^2.
\end{align*}
This, together with $\pi^{(\delta)}({\bf0})={\bf0}$, the contractive property of $\pi^{(\delta)}$ as well as
(${\bf H}_1$), yields that for any $z_1,z_2\in\R^d $ with $|z_1|\le R$ and $ |z_2|\le R$,
\begin{align}\label{E6}
\Lambda(z_1,z_2)\le(1+C_R\delta)^2 |z_1-z_2|^2,
\end{align}
and that for any $z_1,z_2\in\R^d $ with $|z_1|> R$ or  $ |z_2|> R$,
\begin{equation}\label{E7}
\begin{split}
\Lambda(z_1,z_2)&\le (1-2K_R^*\delta) \big|\pi^{(\delta)}(z_1)-\pi^{(\delta)}(z_2)\big|^2+K_R^2|z_1-z_2|^2\delta^{2(1-\theta)}\\
&\le \big(1-2K_R^*\delta +K_R^2 \delta^{2(1- \theta)}\big) |z_1-z_2|^2\\
&\le \big(1- K_R^*\delta  \big) |z_1-z_2|^2,
\end{split}
\end{equation}
where the second inequality and the third inequality hold true owing to \eqref{E9}.
Next,
according to the definition of $\rho,$ we find from Lemma  \ref{pro} that
\begin{equation}\label{E13}
\begin{split}
\E \rho(X_{(n+1)\delta}^{\delta,x}, X_{(n+1)\delta}^{\delta,y})
&=\E\big(\Lambda(X_{n\delta}^{\delta,x},X_{n\delta}^{\delta,y}) \big(\si^2+\E\big (V(X_{(n+1)\delta}^{\delta,x})\big|\mathscr F_{n\delta}\big)\\  &\quad\quad\quad+\E\big (V(X_{(n+1)\delta}^{\delta,y})\big|\mathscr F_{n\delta}\big)\big)\big)\\
&\le \E\Gamma(X_{n\delta}^{\delta,x},X_{n\delta}^{\delta,y}),
\end{split}
\end{equation}
where for any $z_1,z_2\in\R^d,$
\begin{align*}
\Gamma(z_1,z_2):&=\Lambda(z_1,z_2) \Big(\si^2+ V(z_1)+ V(z_2)- 3C_R \si^2\delta\big(\I_{\{|z_1|\le R\}}+\I_{\{|z_2|\le R\}}\big)\\
&\qquad\qquad\qquad+ \frac{3}{8} \si^2 K_R^*\delta\big(\I_{\{|z_1|> R\}}+\I_{\{|z_2|> R\}}\big)\Big).
\end{align*}

Making use of \eqref{E9} and $C_R\delta\le1$ for $\delta\in(0,  {\delta_1^\star }  ]$ enables us to derive from \eqref{WQ2} and \eqref{E6} that for any $z_1,z_2\in\R^d $ with $|z_1|\le R$ and $ |z_2|\le R$
\begin{equation}\label{E11}
\begin{split}
\Gamma(z_1,z_2)
  &\le (1+C_R\delta)^2 \bigg( 1-\frac{ 6 C_R \si^2\delta }{\si^2+V(z_1)+V(z_2)}\bigg)\rho(z_1,z_2)\\
  &\le (1+3C_R \delta)  \bigg( 1-\frac{  6 C_R \si^2\delta }{\si^2+2\|V\|_\8}\bigg)\rho(z_1,z_2)\\
  &\le (1-\lambda_1\delta)\rho(z_1,z_2),
\end{split}
\end{equation}
where $\lambda_1>0$ was defined in \eqref{WQ*}.
On the other hand, by invoking   \eqref{E7} and $1-K_R^*\delta>0$ (see \eqref{E9}),  we derive that  for   $z_1,z_2\in\R^d $ with $|z_1|> R$ or $ |z_2|> R$,
\begin{equation}\label{E12}
\begin{split}
\Gamma(z_1,z_2)
&\le  ( 1- K_R^*\delta  ) \bigg(1+ \frac{3K_R^*\si^2\delta}{4(\si^2+ V(z_1)+ V(z_2))}\bigg)\rho(z_1,z_2)\\
&\le ( 1- K_R^*\delta  ) (1+  3K_R^* \delta/4  )\rho(z_1,z_2)\\
&\le  (1-\lambda_2\delta )\rho(z_1,z_2),
\end{split}
\end{equation}
where $\lambda_2:=\frac{1}{4}K_R^*$. Next, by taking \eqref{E11} and \eqref{E12} into consideration, we obtain from \eqref{E13} that
\begin{equation*}
\E \rho(X_{(n+1)\delta}^{\delta,x}, X_{(n+1)\delta}^{\delta,y})\le\big(1-(\lambda_1\wedge\lambda_2)\delta \big) \E\rho(X_{n\delta}^{\delta,x}, X_{n\delta}^{\delta,y}),
\end{equation*}
where the prefactor $1-(\lambda_1\wedge\lambda_2)\delta \ge0$ for any $\delta\in(0, \delta_1^\star].$ Whence,  \eqref{E14} follows from \eqref{^7} and
 an inductive argument.
\end{proof}

Based on Lemmas \ref{lem1} and \ref{lem2}, as well as Theorem \ref{thm},
 we are in position to carry out the proof of Theorem \ref{thm-0}.

\begin{proof}[Proof of Theorem \ref{thm-0}]
With the aid of Theorem \ref{thm}, as far as  the TEM scheme \eqref{EW0-} and the PEM algorithm \eqref{EWW} are concerned,
it is sufficient to verify the prerequisites required in  Theorem \ref{thm}
in order to complete the proof of Theorem \ref{thm-0}, respectively.

According to Lemma \ref{lem0}, Assumption $({\bf A}_1)$ is satisfied the TEM scheme \eqref{EW0-} and the PEM algorithm \eqref{EWW}.
Regarding the TEM \eqref{EW0-}, according to Lemmas \ref{lem1} and \ref{lem2},
   $({\bf H}_1)$ and $({\bf H}_2)$ hold  with
\begin{align}\label{QW5}
R=R^*,~ C_R= (L_0+ L_1 )(1+R^{\ell_0})^2,~  K_R=L_0+L_1,~   K_R^*=(L_{2 }/2)\wedge L_{4 }, ~ \theta=\gamma,
\end{align}
and
\begin{align}\label{QW}
\psi_1(r)=L_0 (r^{\ell_0}+1)r+|b({\bf 0})| \quad \mbox{ and } \quad  \psi_2(r)\equiv L_0,\quad r\ge0.
 \end{align}
Subsequently, in case of $|\si|\ge\si_0$ (given in \eqref{DD}), Theorem \ref{thm} implies that  \eqref{RT*1} holds for positive  constants
$\lambda,C_0$ introduced in \eqref{DD} and all  $\delta \in(0,\delta_1^\star]$, where  $\delta_1^\star$ was defined in \eqref{dd1}
with the associated quantities being given in \eqref{QW5} and \eqref{QW}.

 As regards the PEM scheme  \eqref{EWW}, by applying  Lemmas \ref{lem1} and \ref{lem2} once more,  we deduce that
 $({\bf H}_1)$ and $({\bf H}_2)$ are fulfilled  for
 \begin{align}\label{QW4}
R=R_*, ~ C_R=2L_0\varphi(R),~K_R=2L_0, ~ K_R^*=L_5,~ \theta=\gamma
\end{align}
 and $\psi_1,\psi_2$ given in \eqref{QW}.  Consequently, as long as $|\si|\ge\si_0$,
 Theorem \ref{thm} enables us to deduce that   the statement  \eqref{RT*1} is also available
 for
 constants $\lambda,C_0>0$ given in \eqref{DD} and all    $\delta\in(0,\delta_1^\star]$,  where
the corresponding quantities are stipulated in \eqref{QW4}.
\end{proof}

\section{Criteria on non-asymptotic $L^2$-Wasserstein bounds and proof of Theorem \ref{cor}}\label{sec3}
 In this part, we follow  a similar routine adopted in Section \ref{sec2} to finish the proof of Theorem \ref{cor}. In the first place, we provide  general criteria to investigate non-asymptotic $L^2$-Wasserstein bounds (see Theorem \ref{IPM} below)
 associated with the modified EM scheme \eqref{E2}. After that, the proof of Theorem \ref{cor} can be finished as an application of Theorem \ref{IPM}.

To furnish
non-asymptotic   $L^2$-Wasserstein bounds between the exact  IPM  and
the associated numerical distribution associated with \eqref{E2}, some additional  conditions  on $b$ and $b^{(\delta)}$ need to be provided. In detail, we  assume that
\begin{enumerate}
\item[$({\bf H}_4)$] there exist constants $\ll_b,C_b >0$ such that for   $x,y\in\R^d$,
 \begin{equation*}
\<x,b(x)\>\le -\ll_b|x|^2+C_b;
\end{equation*}

\item[$({\bf H}_5)$] for any $\aa>0$,  there exist constants $c_\aa ,l_\aa\ge0$ such that for all $\delta\in(0,1]$ and $x\in\R^d$,
\begin{align}\label{EW1-}
|\pi^{(\delta)}(x)-x|\le c_\aa \big(1+|x|^{l_\aa }\big)\delta^\aa;
\end{align}
 moreover, there exist constants ${\beta}>0,l_{\beta}^*,\lambda_{b}^*\ge0$ such that for any $\delta\in(0,1] $ and $x\in\R^d$,
 \begin{align}\label{EW2-}
 |b^{(\delta)}(x)-b(x)|\le \lambda_{b}^*\big(1+|x|^{l_{\beta}^*}\big)\delta^{\beta}.
 \end{align}
\end{enumerate}

Before we proceed, we make some comments on Assumptions $({\bf H}_4)$ and  $({\bf H}_5)$.
\begin{remark}
Under $({\bf A}_0)$ and $({\bf H}_4)$,  \eqref{E1} is strongly well-posed and has   finite $p$-th ($p>0$) moment  in an infinite horizon
as claimed in Lemma \ref{lemma1} below.
 Apparently, $({\bf H}_4)$ holds true trivially for  $\pi^{(\delta)} ={\rm Id}$ and $b^{(\delta)} =b $ (which corresponds to the classical EM scheme). Moreover, we would like to mention that a similar  counterpart   of
 \eqref{EW2-} was imposed in \cite[Assumption (A1.)]{BDMS} to investigate ergodicity and convergence analysis for the tamed unadjusted Langevin algorithms.
\end{remark}

Below, we shall present  a lemma, which also lays a foundation for the proof of Theorem \ref{cor},
 to demonstrate that $({\bf H}_5)$ is satisfied in some scenarios.

\begin{lemma}\label{lem3}
For $\pi^{(\delta)}$ defined  in \eqref{ET-}, \eqref{EW1-} is fulfilled for the pair $(c_\alpha,l_\alpha)= ( 2^{1\vee\frac{\alpha}{\gamma}},1+\ell_0 \alpha/\gamma)$ with $\gamma\in(0,1/2)$.  Under $({\bf A}_0)$,  concerning  $b^{(\delta)}$ and $\bar b^{(\delta)}$ defined respectively in \eqref{EY-} and \eqref{TY}, \eqref{EW2-} holds true respectively with  the triples
\begin{align}\label{DD4}
(\lambda_b^*,l_\beta^*,\beta)=(  3 (L_0\vee|b({\bf0})|), 1+2\ell_0  ,\gamma)~\mbox{ and } ~(\lambda_b^*,l_\beta^*,\beta)=(2(L_0\vee|b({\bf0})|), 1+3\ell_0 ,1).
\end{align}

\end{lemma}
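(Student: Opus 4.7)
The three claims are all direct computations, so my plan is mainly to identify the right elementary inequalities and keep track of exponents so that the stated constants come out as advertised.

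For the projection bound \eqref{EW1-} with $\pi^{(\delta)}$ in \eqref{ET-}, the first observation is that $\pi^{(\delta)}(x)=x$ whenever $|x|\le\varphi^{-1}(\delta^{-\gamma})$, so \eqref{EW1-} is trivial on that set. On the complement, $|\pi^{(\delta)}(x)-x|=|x|-\varphi^{-1}(\delta^{-\gamma})\le|x|$ and, by the very definition of the complement, $1+|x|^{\ell_0}>\delta^{-\gamma}$, i.e.\ $\delta^\gamma(1+|x|^{\ell_0})>1$. Raising this to the power $\alpha/\gamma$ I get $1<\delta^\alpha(1+|x|^{\ell_0})^{\alpha/\gamma}$, so
\[
|\pi^{(\delta)}(x)-x|\le|x|\le |x|\,\delta^\alpha(1+|x|^{\ell_0})^{\alpha/\gamma}.
\]
The elementary inequality $(a+b)^p\le 2^{(p-1)^+}(a^p+b^p)$ applied with $p=\alpha/\gamma$ yields $(1+|x|^{\ell_0})^{\alpha/\gamma}\le 2^{((\alpha/\gamma)-1)^+}(1+|x|^{\ell_0\alpha/\gamma})$, and the trivial bound $|x|(1+|x|^{\ell_0\alpha/\gamma})\le 2(1+|x|^{1+\ell_0\alpha/\gamma})$ (splitting into $|x|\le 1$ and $|x|\ge 1$) produces the constant $c_\alpha=2^{1\vee(\alpha/\gamma)}$ with $l_\alpha=1+\ell_0\alpha/\gamma$.

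For \eqref{EW2-} with $b^{(\delta)}$ as in \eqref{EY-}, a direct computation gives
\[
b^{(\delta)}(x)-b(x)=-\frac{b(x)\,\delta^\gamma|x|^{\ell_0}}{1+\delta^\gamma|x|^{\ell_0}},
\]
so $|b^{(\delta)}(x)-b(x)|\le \delta^\gamma|b(x)|\,|x|^{\ell_0}$. Assumption $({\bf A}_0)$, via the triangle inequality with $y={\bf 0}$, gives $|b(x)|\le L_0(1+|x|^{\ell_0})|x|+|b({\bf 0})|$, and multiplying by $|x|^{\ell_0}$ produces the three monomials $|x|^{\ell_0+1},|x|^{2\ell_0+1},|x|^{\ell_0}$, each of which is dominated by $1+|x|^{1+2\ell_0}$. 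Grouping constants yields the stated triple $(\lambda_b^*,l_\beta^*,\beta)=(3(L_0\vee|b({\bf 0})|),1+2\ell_0,\gamma)$.

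For \eqref{EW2-} with $\bar b^{(\delta)}$ as in \eqref{TY}, the same strategy works once I replace the Taylor-type bound by
\[
\frac{\sqrt{1+u}-1}{\sqrt{1+u}}=\frac{u}{\sqrt{1+u}\,(\sqrt{1+u}+1)}\le \frac{u}{2},\qquad u\ge 0,
\]
applied with $u=\delta|x|^{2\ell_0}$; this gives $|\bar b^{(\delta)}(x)-b(x)|\le \tfrac12|b(x)|\,\delta|x|^{2\ell_0}$, producing a factor $\delta^\beta$ with $\beta=1$. The same application of $({\bf A}_0)$ and the same splitting of monomials (now producing $|x|^{2\ell_0+1},|x|^{3\ell_0+1},|x|^{2\ell_0}$, each bounded by $1+|x|^{1+3\ell_0}$) delivers $l_\beta^*=1+3\ell_0$ and a constant bounded above by $2(L_0\vee|b({\bf 0})|)$. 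No step is genuinely difficult; the only bookkeeping to be careful about is the powers used in the $(a+b)^p$ inequality in Part~1, since the jump from $p<1$ to $p\ge 1$ is exactly what drives the $1\vee(\alpha/\gamma)$ in the exponent of the constant $c_\alpha$.
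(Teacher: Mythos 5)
Your proof is correct and follows essentially the same computations as the paper: on the set where $\pi^{(\delta)}(x)\neq x$ you exploit $\delta^\gamma\varphi(|x|)>1$ raised to the power $\alpha/\gamma$, then split $(1+|x|^{\ell_0})^{\alpha/\gamma}$ via the $(a+b)^p$ inequality; and for both taming bounds you factor out $|b(x)|$, drop the denominator, and regroup monomials — exactly as in the paper (which also uses the elementary inequality $(1+a)^{1/2}-1\le a/2$ rather than your equivalent $(\sqrt{1+u}-1)/\sqrt{1+u}\le u/2$, but this is a cosmetic difference). All stated constants and exponents check out.
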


\begin{proof}
In terms of the definition of $\pi^{(\dd)}$ given in \eqref{ET-} and by recalling $\varphi(r)=1+r^{\ell_0},r\ge0$, we find that for all $\alpha>0 $ and $x\in\R^d,$
\begin{align*}
|\pi^{(\delta)}(x)-x|&=|\pi^{(\delta)}(x)-x|\I_{\{|x|>\varphi^{-1}(\delta^{-\gamma}) \}}\\
 &\le \delta^\aa\big(|x|-|\pi^{(\delta)}(x)|)|\varphi( |x|)^{\frac{\aa}{\gamma}} \I_{\{\varphi(|x|)>\delta^{-\gamma}  \}}\\
 &\le 2^{1\vee\frac{\alpha}{\gamma}}\big(1+|x|^{1+\ell_0 \alpha/\gamma }\big)\delta^\aa.
\end{align*}
Thereby, we conclude that \eqref{EW1-} is true for  the pair  $(c_\alpha,l_\alpha)= (2^{1\vee\frac{\alpha}{\gamma}},1+\ell_0 \alpha/\gamma) .$

On the one hand,  for $b^{(\delta)}$ introduced in \eqref{EY-}, we deduce from $({\bf A}_0)$ that for all $x\in\R^d,$
\begin{align*}
\big|b^{(\delta)}(x)-b(x)\big| = \frac{ \delta^\gamma|x|^{\ell_0}|b(x)|}{1+\delta^\gamma|x|^{\ell_0}} &\le \frac{ (L_0(1+|x|^{\ell_0})|x|+|b({\bf0})|)|x|^{\ell_0} \delta^\gamma }{1+\delta^\gamma|x|^{\ell_0}}\\
&\le  3 (L_0\vee|b({\bf0})|)(1+|x|^{1+2\ell_0})  \delta^\gamma.
\end{align*}
On the other hand, regarding $\bar b^{(\delta)}$ defined in \eqref{TY}, we obtain from $({\bf A}_0)$ that
\begin{align*}
\big|\bar b^{(\delta)}(x)-b(x)\big| = \frac{((1+\delta^{}|x|^{2\ell_0})^{1/2}-1)|b(x)|}{(1+\delta^{}|x|^{2\ell_0})^{1/2}}
 &\le \frac{|x|^{2\ell_0}|b(x)|\delta^{}}{2(1+\delta^{}|x|^{2\ell_0})^{1/2}}\\
 &\le2(L_0\vee|b({\bf0})|)(1+|x|^{1+3\ell_0})  \delta^{},
\end{align*}
where in the first inequality we used the basic inequality: $(1+a)^{\frac{1}{2}}-1\le  a/2, a\ge0$.
Based on the previous analysis, \eqref{EW2-} is available for the respective triples defined in \eqref{DD4}.
\end{proof}

The following theorem provides a quantitative bound between the exact IPM  and the corresponding numerical counterpart
associated with the modified EM scheme \eqref{E2} under appropriate conditions, which in turn yields
a   non-asymptotic  $L^2$-Wasserstein bound between the exact IPM  and the corresponding numerical distribution.
\begin{theorem}\label{IPM}	
Assume   $({\bf A}_0)$, $({\bf A}_1)$ and  $({\bf H}_1)$-$({\bf H}_5)$, and suppose further $({\bf A}_5)$ once $\beta $ involved  in $({\bf H}_5)$
satisfies  $\beta> 1/2.$
Then,  there exists a   constant
$  C_0  >0$
such that for all
 $\dd\in(0,\delta_2^{\star\star}]$, $n\ge0,$ $\mu\in \mathscr P_2(\R^d)$,
\begin{equation}\label{E6-1}
\W_{2}\big(\pi_\8,\pi_\8^{(\delta)}\big)\le  C_0 \delta^{1\wedge\beta}  d^{\ff{1}2\ell_\star}   ,
\end{equation}
  and
\begin{align}\label{E6-2}
\W_2(\mu P^{(\dd)}_{n\dd},\pi_\8)\le C_0 \big(\e^{-\ff{1}2(\ll_1\wedge\ll_2) n\dd}	\W_2(\mu,\pi_\8)+\dd^{1\wedge \bb}  d^{\ff{1}2\ell^\star}   \big),
\end{align}
where  $\pi_\8$ $($resp. $\pi_\8^{(\delta)}$$)$
is  the $($resp. unique$)$   IPM of $(X_t)_{t\ge0}$ $($resp. $(X_{n\delta}^\delta)_{n\ge0}$ determined by \eqref{E2}$)$,
\begin{align}\label{DD5}
\ell_\star:= \big(2\ell_0+1+(1+\ell_0^\star)\mathds 1_{\{\bb>\ff12\}}\big)\vee(\ell_0+l_2)\vee l_\beta^{*},~~   ~~ \dd_2^{\star\star}:=\delta_1^\star\wedge\big( K_R^*/(4\psi^2_2(1))\big)^{\frac{1}{1-2 \theta}},
\end{align}
and the quantities $\lambda_1,\lambda_2, \delta_1^\star>0$ were given in Theorem \ref{thm}.
\end{theorem}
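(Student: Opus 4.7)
The approach combines the $L^2$-Wasserstein contraction of Theorem~\ref{thm} with a direct squared-$L^2$ analysis under a synchronous coupling of the SDE~\eqref{E1} and the scheme~\eqref{E2}. First I would secure the preliminary ingredients: from $({\bf H}_4)$ and It\^o's formula, $(X_t)_{t\ge 0}$ admits uniform-in-time polynomial moments with explicit $d$-tracking, whence $\pi_\8 \in \mathscr P_2(\R^d)$; from the contraction~\eqref{E3}, the sequence $(\delta_x P^{(\dd)}_{n\dd})_{n\ge 0}$ is Cauchy in $\W_2$, so the numerical IPM $\pi_\8^{(\dd)} \in \mathscr P_2(\R^d)$ exists and is unique.

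For \eqref{E6-2} I would begin with the triangle inequality
$\W_2(\mu P^{(\dd)}_{n\dd},\pi_\8) \le \W_2(\mu P^{(\dd)}_{n\dd},\pi_\8 P^{(\dd)}_{n\dd}) + \W_2(\pi_\8 P^{(\dd)}_{n\dd},\pi_\8),$
dispatching the first summand by Theorem~\ref{thm}. For the bias term I would realise a synchronous coupling: let $(X_t)$ solve \eqref{E1} with $X_0 \sim \pi_\8$, and let $(\tilde X_{k\dd})$ be driven by the same Brownian motion with $\tilde X_0 = X_0$, so that $\W_2^2(\pi_\8 P^{(\dd)}_{n\dd}, \pi_\8) \le \E|\tilde X_{n\dd} - X_{n\dd}|^2$. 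Setting $Z_k := \tilde X_{k\dd} - X_{k\dd}$, direct subtraction yields a recursion of the form $Z_{k+1} = G_k + D_k + M_k,$
where $G_k$ gathers the scheme's one-step map evaluated at $\tilde X_{k\dd}$ versus $X_{k\dd}$, $D_k$ is an $\mathcal F_{k\dd}$-measurable deterministic local error, and $M_k$ is a Brownian-martingale increment with $\E[M_k \mid \mathcal F_{k\dd}] = 0$. The quasi-distance $\rho$ and the arithmetic behind Theorem~\ref{thm} then upgrade $G_k$ to a squared-$L^2$ contraction $\E[\rho(\tilde X_{(k+1)\dd},X_{(k+1)\dd})\mid \mathcal F_{k\dd}] \le (1-c\dd)\rho(\tilde X_{k\dd},X_{k\dd})+ \text{errors}$, comparable to a bound on $|Z_k|^2$ since $V$ is bounded via~\eqref{WQ2}.

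The explicit decomposition of the remainder reads
\[Z_{k+1} - G_k = \bigl(X_{k\dd} - \pi^{(\dd)}(X_{k\dd})\bigr) + \dd\bigl(b^{(\dd)}(\pi^{(\dd)}(X_{k\dd})) - b(X_{k\dd})\bigr) - \int_{k\dd}^{(k+1)\dd}\bigl(b(X_s) - b(X_{k\dd})\bigr)\d s.\]
The first bracket is $O(\dd^\alpha)$ for any $\alpha$ by \eqref{EW1-} (take $\alpha$ large); the second is $O(\dd^{1+\bb})$ through \eqref{EW2-} combined with $({\bf A}_0)$. The integral is split via It\^o's formula on $b(X_s)$ into an $\mathcal F_{k\dd}$-conditional-mean piece of order $\dd^2$ and the Brownian martingale $M_k$ of conditional variance $O(\dd^3)$; when $\bb > 1/2$ this It\^o expansion is legitimated precisely by Assumption $({\bf A}_5)$. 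Hence $|D_k| = O(\dd^{1+(1\wedge\bb)})$ pointwise and $\E[|M_k|^2 \mid \mathcal F_{k\dd}] = O(\dd^3)$. The conditional orthogonality $\E\langle G_k+D_k, M_k\rangle = 0$, combined with Young's inequality, gives
\[\E|Z_{k+1}|^2 \le (1-c\dd/2)\,\E|Z_k|^2 + O(\dd^{1+2(1\wedge\bb)}) + O(\dd^3),\]
and iterating geometrically produces $\E|Z_n|^2 = O(\dd^{2(1\wedge\bb)}) + O(\dd^2) = O(\dd^{2(1\wedge\bb)})$, whence $\W_2(\pi_\8 P^{(\dd)}_{n\dd},\pi_\8) = O(\dd^{1\wedge\bb})$. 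Dimensional tracking propagates through the $\pi_\8$-moments $\E|X_0|^q \lesssim d^{q/2}$ and the polynomial-growth exponents of $({\bf A}_0)$, $({\bf H}_5)$, $({\bf A}_5)$, producing the prefactor $d^{\ell_\star/2}$ with $\ell_\star$ as in~\eqref{DD5}.

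Finally, \eqref{E6-1} follows from \eqref{E6-2} by setting $\mu = \pi_\8^{(\dd)}$, using the numerical invariance $\pi_\8^{(\dd)} P^{(\dd)}_{n\dd} = \pi_\8^{(\dd)}$, and letting $n \to \8$ to extinguish the exponential contraction term. The principal technical obstacle will be the $\bb > 1/2$ regime: a naive step-by-step $\W_2$ telescoping would only deliver $O(\dd^{1/2})$ globally, so the sharp rate $\dd^{1\wedge \bb}$ genuinely requires working at the squared-$L^2$ level with the synchronous coupling, so that the Brownian-martingale errors $M_k$ accumulate in conditional variance --- rather than in $\W_2$-norm --- via the cross-term cancellation $\E\langle G_k+D_k, M_k\rangle = 0$.
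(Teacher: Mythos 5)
Your plan shares the two main building blocks with the paper's proof (the weak $\W_2$-contraction of Theorem~\ref{thm} and a synchronous coupling of the SDE with the scheme to quantify the one-step bias), and your decomposition of $Z_{k+1}-G_k$ into a deterministic local error and a Brownian martingale is essentially the paper's split into $\phi^{(\dd)}$, $\psi^{(\dd)}$, and $\int_{k\dd}^{(k+1)\dd}(b(X_s)-b(X_{k\dd}))\,\d s$. However, there is a genuine gap at the heart of the argument.

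You assert a one-step squared-$L^2$ contraction
$\E|Z_{k+1}|^2 \le (1-c\dd/2)\E|Z_k|^2 + O(\dd^{1+2(1\wedge\bb)})+O(\dd^3)$, and then obtain an $n$-uniform bound $\E|Z_n|^2=O(\dd^{2(1\wedge\bb)})$ by summing the geometric series. That contraction factor is not available. The one-step map $\psi^{(\dd)}$ of the scheme is only contractive outside the ball $B_R$; for both arguments inside $B_R$, \eqref{E6} gives the expansive bound $(1+C_R\dd)^2|z_1-z_2|^2$, so the best pointwise bound is \eqref{ER}, $|\psi^{(\dd)}(x,y)|\le (1+C_R\dd)|x-y|$. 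Consequently the recursion you can actually establish is the expansive one $\E|Z_{k+1}|^2 \le (1+C\dd)\E|Z_k|^2 + O(\dd^{1+2(1\wedge\bb)})$, which after iterating gives $\E|Z_n|^2\lesssim \e^{Cn\dd}\dd^{2(1\wedge\bb)}d^{\ell_\star}$ --- the bias grows, rather than stays bounded, as $n\to\8$. Your proposed upgrade to a contraction via the quasi-distance $\rho(x,y)=|x-y|^2(\si^2+V(x)+V(y))$ also breaks down here: the factorisation exploited in the proof of Theorem~\ref{thm} requires the squared distance $|Z_{k+1}|^2$ to be $\mathscr F_{k\dd}$-measurable, which holds for the scheme-vs-scheme synchronous coupling but fails for the scheme-vs-exact coupling, where $Z_{k+1}$ contains the genuinely random increment $\int_{k\dd}^{(k+1)\dd}(b(X_s)-b(X_{k\dd}))\d s$. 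In particular the mixed term $\E\langle G_k+D_k,M_k\rangle(\si^2+V(\tilde X_{(k+1)\dd})+V(X_{(k+1)\dd}))$ does not vanish because the $V$-factors are correlated with $M_k$, so the ``cross-term cancellation'' you invoke to close the argument is no longer available once you pass to $\rho$.

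The paper's route accepts the $\e^{Cn\dd}$ growth of the finite-horizon bias and compensates for it with a fixed-point argument at the level of measures: the triangle inequality combined with the invariances of $\pi_\8$ and $\pi_\8^{(\dd)}$ gives $\W_2(\pi_\8,\pi_\8^{(\dd)}) \le C_0\e^{-\frac{1}{2}(\ll_1\wedge\ll_2)n\dd}\W_2(\pi_\8,\pi_\8^{(\dd)}) + \W_2(\pi_\8 P_{n\dd},\pi_\8 P^{(\dd)}_{n\dd})$, and choosing $n_\dd\sim \dd^{-1}$ makes the contraction prefactor $\le 1/2$ while keeping $n_\dd\dd$ (hence $\e^{Cn_\dd\dd/2}$) bounded, so one can solve for $\W_2(\pi_\8,\pi_\8^{(\dd)})$ and obtain \eqref{E6-1}. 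Only afterwards is \eqref{E6-2} deduced via a second triangle inequality. Your derivation order is the reverse (prove \eqref{E6-2} with an $n$-uniform bias, then let $n\to\8$ for \eqref{E6-1}), but this order is only viable if the $n$-uniform bound on the bias held, which it does not. To fix the plan you would either need to establish a one-step contraction for the cross-coupling in a suitably adapted metric (nontrivial, for the reasons above), or adopt the paper's fixed-point argument.
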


Before the proof of Theorem \ref{IPM}, we prepare for several preliminary lemmas.

\begin{lemma} \label{lemma1}
Assume that  $({\bf A}_0)$ and $({\bf H}_4)$ hold.
Then, for any $p>0,$   $t\ge0$ and $x\in\R^d,$
\begin{align}\label{WW2}
\E |X_t^x|^p \le 2C(p)/(p\lambda_b)
+2^{(p/2-1)^+}\e^{-\frac{1}{2}p\lambda_bt}(1+  |x|^p),
\end{align}
where $(X_t^x)_{t\ge0}$ stands for the unique solution to \eqref{E1} with the initial value $X_0=x\in\R^d$, and
\begin{align}\label{PP1}
C(p):=2^{\frac{1}{2}p}\big(  (1-2/p)^+ /{\lambda_b}\big)^{ (p/2-1)^+ }\big(\lambda_b+C_b+\si^2(d+(p-2)^+)/2\big)^{1\vee\frac{p}{2}}.
\end{align}
Consequently,  $(X_t)_{t\ge0}$ admits an   IPM  $\pi_\infty$ satisfying
\begin{align}\label{EW2*}
\pi_\8(|\cdot|^p)\le  2C(p)/(p\lambda_b).
\end{align}
\end{lemma}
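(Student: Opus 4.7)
The plan is to split the moment bound into two cases according to whether $p\ge 2$ or $p\in(0,2)$, then deduce existence of an IPM by Krylov--Bogolyubov and Fatou.

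For $p\ge 2$, I would apply Itô's formula to $|X_t|^p$ (or, to avoid the singularity at the origin, first to $(\vv+|X_t|^2)^{p/2}$ and then pass $\vv\downarrow 0$), so that
\[
 \d|X_t|^p = p|X_t|^{p-2}\<X_t,b(X_t)\>\d t + p\si|X_t|^{p-2}\<X_t,\d W_t\> + \tfrac{p(p+d-2)}{2}\si^2|X_t|^{p-2}\d t.
\]
Applying $({\bf H}_4)$ controls the drift term by $-p\ll_b|X_t|^p + pC_b|X_t|^{p-2}$, and then Young's inequality in the form $|X_t|^{p-2}\le \frac{p-2}{p}\vv|X_t|^p+\frac{2}{p}\vv^{-(p-2)/2}$ with a judicious choice of $\vv$ absorbs the lower-order term into half the dissipation. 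Taking expectations and using a standard localisation to handle the stochastic integral yields the ODI
\[
 \tfrac{\d}{\d t}\E|X_t|^p \le -\tfrac{p\ll_b}{2}\E|X_t|^p + C(p),
\]
with $C(p)$ precisely matching \eqref{PP1} (the factor $(1-2/p)^{p/2-1}$ coming from the optimal choice of $\vv$ and the factor $(\ll_b+C_b+\si^2(d+p-2)/2)^{p/2}$ from packaging the constants). Gronwall then delivers $\E|X_t|^p\le \frac{2C(p)}{p\ll_b}+\e^{-p\ll_b t/2}|x|^p$, which together with $(a+b)^{p/2}\le 2^{p/2-1}(a^{p/2}+b^{p/2})$ rearranges to \eqref{WW2}.

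For $0<p<2$, I would simply invoke Jensen's inequality $\E|X_t|^p\le(\E|X_t|^2)^{p/2}$, use the $p=2$ estimate, and the subadditivity $(a+b)^{p/2}\le a^{p/2}+b^{p/2}$ (which explains why $2^{(p/2-1)^+}=1$ and the exponent $(p/2-1)^+=0$ in $C(p)$ in this regime); the formula \eqref{PP1} then matches because $(1-2/p)^+=0$ and $1\vee p/2=1$.

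For the IPM, the uniform-in-$t$ bound $\sup_{t\ge 0}\E|X_t^x|^p<\8$ (take $p=2$ say) gives tightness of the empirical occupation measures $\mu_T^x:=\frac{1}{T}\int_0^T\mathscr L_{X_t^x}\,\d t$. Under $({\bf A}_0)$ and $({\bf H}_4)$ the SDE \eqref{E1} is strongly well-posed and the associated semigroup is Feller, so any weak limit point of $\mu_T^x$ as $T\to\8$ is an IPM $\pi_\infty$. The bound \eqref{EW2*} is then obtained by letting $t\to\8$ in \eqref{WW2} and applying Fatou's lemma (on $|\cdot|^p\wedge n$ followed by $n\to\8$) against any such stationary limit.

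The main obstacle will be the constant-bookkeeping in the Young's-inequality step: one has to track the dependence on $\vv$ so that $C(p)$ comes out with exactly the stated exponent structure $((1-2/p)^+/\ll_b)^{(p/2-1)^+}$ and not with some looser bound. All other ingredients (Itô, localisation, Jensen, Krylov--Bogolyubov, Fatou) are routine once this has been carried out cleanly.
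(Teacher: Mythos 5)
Your strategy---Itô on a radial Lyapunov function, Young's inequality to absorb the lower-order term into half the dissipation, localisation, Gronwall, then Krylov--Bogolyubov plus Fatou---is the paper's strategy, and carried out carefully it would prove the lemma. Where you diverge is in the bookkeeping, and this matters because the precise form of \eqref{PP1} and the prefactor $2^{(p/2-1)^+}(1+|x|^p)$ are artifacts of the paper's specific choice of Lyapunov function. The paper applies Itô to $V_p(x)=(1+|x|^2)^{p/2}$ for \emph{all} $p>0$, i.e.\ it keeps your $\vv$-regularisation at $\vv=1$ rather than sending $\vv\downarrow 0$, and it never splits into cases $p\ge2$ versus $p<2$. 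Two consequences. First, since $({\bf H}_4)$ controls $|x|^2$ and not $1+|x|^2$, one writes $-\ll_b|x|^2=-\ll_b(1+|x|^2)+\ll_b$, and this is exactly where the extra $\ll_b$ inside $C(p)$ comes from; if you apply Itô directly to $|x|^p$ (or send $\vv\downarrow 0$), the Young step produces the smaller inner constant $C_b+\si^2(d+(p-2)^+)/2$ with no $+\ll_b$, so your assertion that the constants "precisely match" \eqref{PP1} is not accurate---you would get a strictly tighter bound, which of course still implies \eqref{WW2}. Second, the factor $2^{(p/2-1)^+}(1+|x|^p)$ in \eqref{WW2} arises from the elementary conversion $|x|^p\le V_p(x)\le 2^{(p/2-1)^+}(1+|x|^p)$ at the very end; in your direct route Gronwall already produces $\e^{-p\ll_b t/2}|x|^p$, so the inequality $(a+b)^{p/2}\le 2^{p/2-1}(a^{p/2}+b^{p/2})$ you invoke has no role and the appearance of that prefactor in your version is unexplained. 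For $0<p<2$ the paper needs neither Young nor your Jensen reduction: since $(1+|x|^2)^{p/2-1}\le1$, the lower-order term is already a constant; your Jensen detour from $p=2$ is valid but, again, gives $(C(2)/\ll_b)^{p/2}<2C(p)/(p\ll_b)$ rather than equality. The Krylov--Bogolyubov and Fatou parts match the paper, which additionally routes through $K\wedge|\cdot|^p$ and the concavity of $r\mapsto K\wedge r$ to justify the passage to the limit. In short: your plan is correct and even slightly sharper, but the paper's uniform use of $(1+|x|^2)^{p/2}$ is what makes the stated $C(p)$ and prefactor come out on the nose, with no case split and no $\vv\downarrow0$ limit.
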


\begin{proof}
Under $({\bf A}_0)$ and $({\bf H}_4)$, it is quite standard that the SDE \eqref{E1} is strongly well-posed.
The proof of \eqref{WW2} is more or less standard. Nevertheless, we herein give a sketch    to make the content self-contained and most importantly highlight the dimension dependency  of the associated constant. For the Lyapunov function $V_p(x):=(1+|x|^2)^{\frac{p}{2}}, x\in\R^d,$ it is easy to see that
\begin{align*}
\nn V_p(x)=p(1+|x|^2)^{\frac{p}{2}-1}x~ \mbox{ and } ~\nn^2V_p(x)=p(1+|x|^2)^{\frac{p}{2}-1}I_d+p(p-2)(1+|x|^2)^{\frac{p}{2}-2}x^\top x,
\end{align*}
where $I_d$ means the $d\times d$ identity matrix, and $x^\top$ denotes the transpose of $x.$
Let $\mathcal L$ be the infinitesimal generator of $(X_t)_{t\ge0}$. Thus, we deduce from $({\bf H}_4)$ and Young's inequality that
\begin{equation}\label{WW3}
\begin{split}
(\mathcal LV_p)(x)
&\le p(1+|x|^2)^{\frac{p}{2}-1}\< x,b(x)\>+\ff{1}2\si^2 p(1+|x|^2)^{\frac{p}{2}-1} (d+(p-2)^+ )\\
&\le -p\lambda_bV_p(x)+p\big(\lambda_b+C_b+\si^2(d+(p-2)^+)/2\big)(1+|x|^2)^{\frac{p}{2}-1}\\
&\le -\frac{1}{2}p\lambda_bV_p(x)+C(p),
\end{split}
\end{equation}
where the constant $C(p)$ was defined in \eqref{PP1}.
Next, for any $n\ge1$, define the stopping time
\begin{align*}
\tau_n=\inf\big\{t\ge0: |X_t^x|\ge n\big\}.
\end{align*}
Thus,   It\^o's  formula  and \eqref{WW3} enable  us to derive that
\begin{align*}
\E\Big(\e^{\frac{1}{2}p\lambda_b(t\wedge\tau_n)}V_{p}(X_{t\wedge\tau_n}^x) \Big)\le  V_{p}(x)+ 2  C (p) \e^{\frac{1}{2}p\lambda_bt}/{(p\lambda_b)}.
\end{align*}
This further implies $\tau_n\to\8$ a.s. Whereafter,
the statement \eqref{WW2} can be available by making use of Fatou's lemma.

By keeping \eqref{WW2} in mind, the Krylov-Bogoliubov theorem   (see e.g. \cite[Corollary 3.1.2]{DZ})
implies  that $(X_t)_{t\ge0}$ has an IPM, denoted by  $\pi_\8 $.
Notice that, for any $K>0$, the function $[0,\8)\ni r\mapsto K\wedge r$ is   concave. So, by means of  the invariance of $\pi_\8$, we obtain from Jensen's inequality that
\begin{align*}
\pi_\8 \big(K\wedge|\cdot|^p\big)=\pi_\8 \big(\E\big(K\wedge|X_t^\cdot|^p\big)\big)\le \pi_\8 \big( K\wedge\E|X_t^\cdot|^p \big).
\end{align*}
This, along  with \eqref{WW2} and the basic inequality: $a\wedge (b+c)\le a\wedge b+a\wedge c$ for $a,b,c\ge0,$ leads to
\begin{align*}
\pi_\8 \big(K\wedge|\cdot|^p\big) \le 2C(p)/(p\lambda_b)+\pi_\8 \big( K\wedge\big(2^{(p/2-1)^+}\e^{-\frac{1}{2}p\lambda_bt}(1+|\cdot|^p)\big) \big).
\end{align*}
Afterward, the dominated convergence theorem   and Fatou's lemma  yield  the assertion \eqref{EW2*} by   approaching $K\uparrow\8$ and sending $t\uparrow\8$ successively.
\end{proof}

Below, we provide some sufficient conditions to guarantee that the modified Euler scheme \eqref{E2} has finite moment  in an infinite horizon.

\begin{lemma}\label{lemma3}
Assume   \eqref{EY*} and
 suppose further that  there exist constants $ \lambda_b^\star, C_b^\star>0$ such that for any $\delta\in[0,1] $ and $x\in\R^d,$
\begin{align}\label{WW*}
\<\pi^{(\delta)}(x),b^{(\delta)}(\pi^{(\delta)}(x))\>\le   C_b^\star- \lambda_b^\star|\pi^{(\delta)}(x)|^2.
\end{align}
 Then, for any $p>0$,
 there exists a constant  $  C^*(p)>0$ such that for all $\dd\in (0,\dd_\star]$ and $n\ge0,$
\begin{equation}\label{HB0}
\E\big(|X^{\dd}_{n\dd}|^{p}\big|\mathscr F_0\big)\le  C^*(p)(1+   d^{\ff p2}) +  \e^{-\frac{p\lambda_b^\star  n\dd}{4(3+\lfloor p\rfloor)} }|X^\dd_0|^p,
\end{equation}
in which $\delta_\star:=\big( \lambda_b^\star/(2\psi^2_2(1))\big)^{\frac{1}{1-2\theta}}\wedge(1/\lambda_b^\star)\wedge 1$ with $\psi_2(\cdot)$ being introduced in \eqref{EY*}.
Consequently,  for each fixed $\delta\in(0,\delta_\star],$  $(X_{n\delta}^{\delta})_{n\ge0}$ possesses an   IPM  $\pi^{(\delta)}_\infty$  satisfying
\begin{align}\label{PP}
\pi_\8^{(\delta)}(|\cdot|^p)\le  C^*(p)(1+   d^{\ff p2}).
\end{align}

\end{lemma}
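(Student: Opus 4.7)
The plan is to establish a one-step Lyapunov drift inequality of the form
\begin{align*}
\E\big(V_p(X^\dd_{(n+1)\dd})\big|\mcr F_{n\dd}\big)\le (1-\kappa_p\dd)V_p(X^\dd_{n\dd})+K_p(1+d^{p/2})\dd
\end{align*}
for the test function $V_p(x):=(1+|x|^2)^{p/2}$, and then to iterate using $1-r\le \e^{-r}$ to obtain \eqref{HB0}. The IPM conclusion \eqref{PP} will be extracted from \eqref{HB0} by copying verbatim the Krylov--Bogoliubov plus truncation-and-invariance argument already carried out for the continuous-time dynamics in Lemma \ref{lemma1}: \eqref{HB0} supplies tightness through Chebyshev's inequality, after which Jensen's inequality applied to the concave truncation $r\mapsto K\wedge r^p$, invariance under $P^{(\dd)}_{n\dd}$, the limit $n\to\8$ in \eqref{HB0}, and monotone convergence as $K\uparrow\8$ deliver \eqref{PP}.

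The warm-up is $p=2$. Writing $Y_n:=\pi^{(\dd)}(X^\dd_{n\dd})+\dd\, b^{(\dd)}(\pi^{(\dd)}(X^\dd_{n\dd}))$, independence of the Brownian increment gives $\E(|X^\dd_{(n+1)\dd}|^2|\mcr F_{n\dd})=|Y_n|^2+\si^2 d\dd$. Expanding $|Y_n|^2$, the hypothesis \eqref{WW*} controls the inner-product term by $2\dd(C_b^\star-\ll_b^\star|\pi^{(\dd)}(X^\dd_{n\dd})|^2)$, while the affine bound \eqref{EY*} taken at $r=1$ controls the $\dd^2|b^{(\dd)}|^2$ term and produces a harmful quadratic piece $2\dd^{2-2\theta}\psi_2(1)^2|\pi^{(\dd)}(X^\dd_{n\dd})|^2$. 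The choice of $\dd_\star$ is designed precisely so that $2\dd^{1-2\theta}\psi_2(1)^2\le\ll_b^\star$ (absorbing the bad piece into half the dissipation) and so that $1-\dd\ll_b^\star\ge 0$; combining with the contractive inequality $|\pi^{(\dd)}(x)|\le|x|$ inherited from $({\bf A}_1)$ yields
\begin{align*}
|Y_n|^2\le (1-\dd\ll_b^\star)|X^\dd_{n\dd}|^2+C\dd,
\end{align*}
whereupon iteration gives \eqref{HB0} for $p=2$.

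For $p\ge 2$ the plan is to apply a second-order Taylor expansion of $V_p$ at $Y_n$ with increment $\si\3 W_{n\dd}$; since $\|\nn^2 V_p(x)\|_{\mathrm{op}}\le p(p-1)(1+|x|^2)^{p/2-1}$, the linear term kills upon conditional expectation, and the Hessian contribution splits by $(a+b)^{p/2-1}\le C_p(a^{p/2-1}+b^{p/2-1})$ together with $\E|\3 W_{n\dd}|^q\le C_q d^{q/2}\dd^{q/2}$ into a piece $C_p(1+|Y_n|^2)^{p/2-1}\si^2 d\dd$ plus a pure-noise remainder $C_p\si^p d^{p/2}\dd^{p/2}$. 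Combining with the $p=2$ drift bound on $|Y_n|^2$ and applying Young's inequality to the cross-term $(1+|Y_n|^2)^{p/2-1}d\dd$, with splitting parameter scaled like $1/p$ against a small portion of $V_p(X^\dd_{n\dd})$, yields the desired drift inequality with contraction constant of order $p\ll_b^\star/(3+\lfloor p\rfloor)$ and additive term proportional to $(1+d^{p/2})\dd$. The remaining range $p\in(0,2)$ follows directly from the $p=2$ case via Jensen and the subadditivity $(a+b)^{p/2}\le a^{p/2}+b^{p/2}$.

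The main obstacle will be in the higher-moment step, where one must simultaneously (i) keep the Hessian contribution linear in $d$, so that only $d^{p/2}$ (and not $d^p$) appears in the final constant, and (ii) extract a contraction rate that degrades only polynomially in $p$. These twin requirements force a careful bookkeeping of the Young split between the factor $(1+|Y_n|^2)^{p/2-1}$ and $V_p(X^\dd_{n\dd})$, calibrated against both the $p=2$ drift and the size of $p$, and it is this bookkeeping that ultimately yields the precise denominator $4(3+\lfloor p\rfloor)$ appearing in \eqref{HB0}.
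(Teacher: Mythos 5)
Your proposal is correct, but it is a genuinely different route from the paper's. The paper first reduces the general exponent $p>0$ to even integers $2q$ with $q\ge 3$ via H\"older's inequality, then expands $|X^\dd_{(n+1)\dd}|^{2q}$ by the binomial theorem around the one-step quadratic bound $|X^\dd_{(n+1)\dd}|^{2}\le(1-\lambda_b^\star\dd)|X^\dd_{n\dd}|^2+C\dd+\Lambda(X^\dd_{n\dd},\triangle W_{n\dd})$ and estimates each binomial term using centred conditional expectations, Young's inequality, and Gaussian moment bounds $\E|W_\dd|^{2k}\le (2k)!(\dd d)^k/(2^k k!)$. The denominator $4(3+\lfloor p\rfloor)$ in \eqref{HB0} comes precisely from that initial H\"older reduction, not from the Young split: the drift inequality for $2q$ is contracted at rate $\lambda_b^\star/2$, and dividing by the H\"older exponent $2(3+\lfloor p\rfloor)/p$ produces the stated rate. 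Your Taylor-expansion approach on $V_p(x)=(1+|x|^2)^{p/2}$ with the Hessian bound $\|\nn^2 V_p\|_{\rm op}\le p(p-1)(1+|x|^2)^{p/2-1}$, together with the splitting $(1+|Y_n+t\si w|^2)^{p/2-1}\le C_p((1+|Y_n|^2)^{p/2-1}+(\si|w|)^{p-2})$ and a Young split calibrated at scale $\eta\sim\lambda_b^\star\dd$, handles every $p\ge 2$ in one stroke with no Hölder loss; it in fact yields a contraction rate of order $p\lambda_b^\star$ (at worst $\lambda_b^\star$ after the $(1-7\lambda_b^\star\dd/8)^{p/2}\le 1-7\lambda_b^\star\dd/8$ relaxation), which is at least as strong as the rate claimed in \eqref{HB0}, so the lemma follows. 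The one inaccuracy in your write-up is the attribution of $4(3+\lfloor p\rfloor)$ to the Young bookkeeping: your method does not produce that factor and would give a cleaner bound; since you only need to dominate the stated bound, this is harmless, but worth correcting. The $p<2$ extension by conditional Jensen and subadditivity, and the Krylov--Bogoliubov plus concave-truncation step for \eqref{PP}, match the paper exactly. Overall, what your approach buys is a unified treatment of real $p\ge 2$ and a sharper explicit rate; what the paper's approach buys is elementariness (only binomial theorem and direct moment bounds, no second-order Taylor calculus on $V_p$).
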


\begin{proof}
Below, we shall stipulate $\delta\in(0,\delta_\star]$ so that
\begin{align}\label{EW3-}
2\psi_2(1)^2  \delta^{ 2(1- \theta)}\le  \lambda_b^\star\delta\quad \mbox{ and } \quad 0\le 1- \lambda_b^\star\dd\le1.
\end{align}
Provided that,  for any
integer   $q\ge3,$
  there exists a constant  $c_1(q) >0$ such that for any $n\ge0,$
\begin{equation}\label{E8}
\E\big(|X^{\dd}_{n\dd}|^{2q}\big|\mathscr F_0\big)\le c_1(q)(1+d^q)+ \e^{- \frac{1}{2}\lambda_b^\star  n\dd }|X^\dd_0|^{2q},
\end{equation}
 H\"older's  inequality implies   that for some constant  $c_2(p) >0,$
 \begin{align*}
\E\big(|X^{\dd}_{n\dd}|^{p}\big|\mathscr F_0\big)
&\le \Big(\E\big(|X^{\dd}_{n\dd}|^{2(3+\lfloor p\rfloor)}\big|\mathscr F_0\big)\Big)^{\frac{p}{2(3+\lfloor p\rfloor)}}
\\&
\le c_2(p)(1+d^{\frac{p}{2}})+ \e^{- \frac{p \lambda_b^\star  n\dd}{4(3+\lfloor p\rfloor)} }|X^\dd_0|^{p}.
\end{align*}
Whence, \eqref{HB0} can be verifiable based on the establishment of \eqref{E8}. Thus, for each fixed $\delta\in(0,\delta_\star]$,  the discrete-time Markov chain $(X_{n\delta}^\delta)_{n\ge0}$ has an IPM, written as  $\pi_\8^{(\delta)}$,  by making use of the Krylov-Bogoliubov theorem (see e.g. \cite[Corollary 3.1.2]{DZ}). In the end, \eqref{PP} can be attainable by following exactly the strategy  to derive \eqref{EW2*}.

Below, we focus on  proving  the statement \eqref{E8}.
In terms of  the scheme \eqref{E2},  we have
\begin{equation*}
\begin{split}
\big|X_{(n+1)\delta}^\delta\big|^2=&\big|\pi^{(\delta)}(X_{n\delta}^\delta)\big|^2
+2\<\pi^{(\delta)}(X_{n\delta}^\delta),b^{(\delta)}(\pi^{(\delta)}(X_{n\delta}^\delta))\>\delta+
\big|b^{(\delta)}(\pi^{(\delta)}(X_{n\delta}^\delta))\big|^2\delta^2\\
&+\Lambda(X_{n\delta}^\delta ,\triangle W_{n\delta}),
\end{split}
\end{equation*}
where for all $x,y\in\R^d,$
\begin{align*}
\Lambda(x,y):=2\si\<\pi^{(\delta)}(x)+b^{(\delta)}(\pi^{(\delta)}(x))\delta,y\>+
\si^2|y|^2.
\end{align*}
By taking \eqref{EY*} with $r=1$ therein  and \eqref{WW*} into consideration, we derive that for any $x\in\R^d$ and $\delta\in(0,1],$
\begin{align*}
&\big|\pi^{(\delta)}(x)\big|^2
+2\<\pi^{(\delta)}(x),b^{(\delta)}(\pi^{(\delta)}(x))\>\delta+
\big|b^{(\delta)}(\pi^{(\delta)}(x))\big|^2\delta^2\\
&\le2(  C_b^\star+\psi_1(1)^2)\delta + \big(1-2 \lambda_b^\star\delta +2\psi_2(1)^2  \delta^{ 2(1- \theta)}\big)\big|\pi^{(\delta)}(x)\big|^2 \\
&\le 2(  C_b^\star+\psi_1(1)^2)\delta+ (1-  \lambda_b^\star\delta  )\big| x |^2,
\end{align*}
where  the second inequality holds true thanks to \eqref{EW3-}  and $|\pi^{(\delta)}(x)|\le|x|, x\in\R^d$. Consequently, we arrive at
\begin{equation*}
\big|X_{(n+1)\delta}^\delta\big|^2\le(1-  \lambda_b^\star\delta  )\big| X_{n\delta}^\delta \big|^2+2(  C_b^\star+\psi_1(1)^2)\delta+
\Lambda\big(X_{n\delta}^\delta ,\triangle W_{n\delta}\big).
\end{equation*}
This, along with  the binomial theorem, yields that for integer $q\ge3,$
\begin{equation}\label{HB4}
\begin{aligned}
|X^{\dd}_{(n+1)\dd}|^{2q}
 &\le (1- \lambda_b^\star\dd)^q|X^{\dd}_{n\dd}|^{2q}\\
&\quad+q(1- \lambda_b^\star\dd)^{q-1}|X^{\dd}_{n\dd}|^{2(q-1)}\big(2(  C_b^\star+\psi_1(1)^2)\delta+\Lambda(X_{n\delta}^\delta ,\triangle W_{n\delta})\big)\\
		 &\quad+ \sum_{i=0}^{q-2}C_q^i(1- \lambda_b^\star\dd)^i|X^{\dd}_{n\dd}|^{2i}\big(2(  C_b^\star+\psi_1(1)^2)\delta+\Lambda(X_{n\delta}^\delta ,\triangle W_{n\delta})\big)^{q-i}\\
&=:(1- \lambda_b^\star\dd)^q|X^{\dd}_{n\dd}|^{2q}+\Gamma_q(X_{n\delta}^\delta ,\triangle W_{n\delta})+\sum_{i=0}^{q-2}(1- \lambda_b^\star\dd)^i \Gamma_{q,i}(X_{n\delta}^\delta ,\triangle W_{n\delta}).
\end{aligned}
\end{equation}

Via the tower property of conditional expectations, together with the fact that $\triangle W_{n\delta}$  is independent of $\mathscr F_{n\delta}$,
we infer from \eqref{EW3-} that
\begin{equation}\label{T4}
\begin{split}
\E\big(\Gamma_q(X_{n\delta}^\delta ,\triangle W_{n\delta})\big|\mathscr F_0\big)&=\E\big(\E\big(\Gamma_q(X_{n\delta}^\delta ,\triangle W_{n\delta})\big|\mathscr F_{n\delta}\big)\big|\mathscr F_0\big)\\
&=q(1- \lambda_b^\star\dd)^{q-1}\big(2 (  C_b^\star+\psi_2(1)^2 )+d\sigma^2\big)\delta\E\big(|X^{\dd}_{n\dd}|^{2(q-1)}\big|\mathscr F_0\big)\\
&\le  q \big(2(  C_b^\star+\psi_2(1)^2 )+d\sigma^2\big)\delta\E\big(|X^{\dd}_{n\dd}|^{2(q-1)}\big|\mathscr F_0\big).
\end{split}
\end{equation}
Furthermore,  for an integer $k\ge1$,
\eqref{EY*} with $r=1$, besides  $|\pi^{(\delta)}(x)|\le |x|, x\in\R^d$, implies that there exists a constant $c_3(k)>0$ such that for any $\delta\in(0,1]$ and $x,y\in\R^d,$
\begin{align*}
|\Lambda(x,y)|^k&\le 3^{k-1}\big((2|\si|)^k(|x|^k+|b^{(\delta)}(\pi^{(\delta)}(x))|^k\delta^k)|y|^k+\si^{2k}|y|^{2k}\big)\\
&\le c_3(k)\big((1+|x|^k)|y|^k+|y|^{2k}\big).
\end{align*}
As a consequence, applying Young's inequality and leveraging  the fact that
\begin{align}\label{^1}
\E | W_{ \delta}|^{2k}  \le    (2k)!(\dd  d)^k/{(2^kk!)},
\end{align}
we obtain  that there exist  constants  $c_4(q),c_5(q) >0$ such that  for   $i=0,\cdots,q-2,$ and $\delta\in(0,1],$
\begin{equation}\label{T5*}
\begin{aligned}
&\E\big(\Gamma_{q,i}(X_{n\delta}^\delta ,\triangle W_{n\delta})\big|\mathscr F_0\big)\\
&\le c_4(q) \Big(\dd^{q-i}\E\big(|X^{\dd}_{n\dd}|^{2i}\big|\mathscr F_0\big)+\E\big((|X^{\dd}_{n\dd}|^{q+i}+ |X^{\dd}_{n\dd}|^{2i} )\big|\mathscr F_0\big)\E|\triangle W_{n\delta}|^{q-i}   \\
&\qquad\qquad+\E\big(|X^{\dd}_{n\dd}|^{2i}\big|\mathscr F_0\big)\E |\triangle W_{n\delta}|^{2(q-i)} \Big)\\
&\le \frac{\lambda_b^\star\dd}{4(q-1)}\E\big(|X^{\dd}_{ n \dd}|^{2q}\big|\mathscr F_0\big)+c_5(q) (1+d^q)  \delta.
\end{aligned}
\end{equation}
Whereafter, taking \eqref{T4} and \eqref{T5*} into consideration, and applying Young's inequality once more gives that for some constant $c_6(q)>0,$
\begin{align*}
\E\big(|X^{\dd}_{(n+1)\dd}|^{2q}\big|\mathscr F_0\big)\le (1-\lambda_b^\star\dd/2)\E\big(|X^{\dd}_{ n \dd}|^{2q}\big|\mathscr F_0\big)+c_6(q)\delta (1+d^{q}).
\end{align*}
So, \eqref{E8} follows by an inductive argument and the basic inequality: $a^r\le \e^{-(1-a)r},a,r>0.$
\end{proof}

In the following context, we shall write  $(X_t^\xi)_{t\ge0}$ and $(X_{n\delta}^{\delta,\xi})_{n\ge0}$  as solutions to \eqref{E1} and \eqref{E2}, respectively, with the initial value $X_0^\xi=X_0^{\delta,\xi}=\xi\in L^p(\OO\to\R^d, \mathscr F_0,\P)$ for $p\ge2,$ and,  in particular,
set the initial distribution $\mathscr L_\xi=\pi_\8,$
where $\pi_\8$ is the IPM of $(X_t)_{t\ge0.}$ Concerning the exact solution and the numerical scheme starting from the same initial distribution $\pi_\8,$
the following lemma reveals   the  corresponding convergence rate under the $L^2$-Wasserstein distance in a finite horizon.

\begin{lemma}\label{lemma2} Assume that $({\bf A}_0)$, $({\bf H}_1)$,  $({\bf H}_4)$, $({\bf H}_5)$ hold, and suppose further $({\bf A}_5)$ in case  $\beta $ given in $({\bf H}_5)$
satisfies  $\beta> 1/2.$. Then,  there exists a constant $C_0^*>0$ independent of the dimension $d$ such that
for   any $n\ge0$ and
 $\delta\in(0,\delta_{\star\star}]$,
\begin{equation}\label{LE1}
\W_2\big(\pi_\8 P^{(\dd)}_{n\dd},\pi_\8\big)^2\le C_0^*\e^{C_0^*n\delta}  \delta^{2(1\wedge \beta)}  d^{\ell_\star},
\end{equation}
where $\delta_{\star\star}:=\delta_R\wedge (2K_R^*)^{-1}\wedge (K_R^*/K_R^2)^{\frac{1}{1-2 \theta}}$ and $\ell_\star$ was defined in \eqref{DD5}.

\end{lemma}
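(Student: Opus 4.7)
The plan is to set up a synchronous coupling between the exact solution $(X_t^\xi)_{t\ge0}$ of \eqref{E1} and the modified EM recursion $(X_{n\dd}^{\dd,\xi})_{n\ge0}$ of \eqref{E2}, both driven by the same Brownian motion and starting at the common initial value $\xi$ with $\mathscr L_\xi=\pi_\8$. Since $\pi_\8$ is invariant under $(X_t)_{t\ge0}$, we have $\mathscr L_{X_{n\dd}^\xi}=\pi_\8$, so the coupling immediately delivers
\begin{align*}
\W_2\big(\pi_\8 P_{n\dd}^{(\dd)},\pi_\8\big)^2\le E_n:=\E\big|X_{n\dd}^\xi-X_{n\dd}^{\dd,\xi}\big|^2,
\end{align*}
and the goal becomes a recursive bound $E_{n+1}\le(1+C\dd)E_n+C\dd^{1+2(1\wedge\bb)}d^{\ell_\star}$, which discrete Gronwall will close.

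First I would assemble the requisite moment bounds. The invariance of $\pi_\8$ combined with \eqref{EW2*} yields $\sup_{t\ge0}\E|X_t^\xi|^p\lesssim d^{p/2}$; the hypothesis \eqref{WW*} needed in Lemma \ref{lemma3} follows from $({\bf H}_1)$ via Remark \ref{re1}(1), so Lemma \ref{lemma3} then gives $\sup_{n\ge0}\E|X_{n\dd}^{\dd,\xi}|^p\lesssim(1+d^{p/2})$; and the standard SDE one-step regularity delivers $\E|X_s^\xi-X_{n\dd}^\xi|^2\lesssim d\,\dd$ for $s\in[n\dd,(n+1)\dd]$. Next I would decompose one step, writing $D_n:=X_{n\dd}^\xi-X_{n\dd}^{\dd,\xi}$; the Brownian increment cancels, leaving
\begin{align*}
D_{n+1}=D_n+\bigl(X_{n\dd}^{\dd,\xi}-\pi^{(\dd)}(X_{n\dd}^{\dd,\xi})\bigr)+\bigl(b(X_{n\dd}^\xi)-b(X_{n\dd}^{\dd,\xi})\bigr)\dd+\Theta_n+\Xi_n,
\end{align*}
where $\Theta_n:=\int_{n\dd}^{(n+1)\dd}\!(b(X_s^\xi)-b(X_{n\dd}^\xi))\d s$ is the Itô-Taylor residual and $\Xi_n$ collects the two perturbation pieces $[b(X_{n\dd}^{\dd,\xi})-b(\pi^{(\dd)}(X_{n\dd}^{\dd,\xi}))]\dd$ and $[b(\pi^{(\dd)}(X_{n\dd}^{\dd,\xi}))-b^{(\dd)}(\pi^{(\dd)}(X_{n\dd}^{\dd,\xi}))]\dd$, bounded respectively by $({\bf A}_0)$ combined with \eqref{EW1-} (of order $\dd^2$ times polynomial factor of degree $\ell_0+l_2$) and by \eqref{EW2-} (of order $\dd^{1+\bb}$ times polynomial factor of degree $l_\bb^*$). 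Expanding $|D_{n+1}|^2=|D_n|^2+2\<D_n,\cdot\>+|\cdot|^2$ and conditioning, the cross term $2\dd\E\<D_n,b(X_{n\dd}^\xi)-b(X_{n\dd}^{\dd,\xi})\>$ is controlled via $({\bf H}_1)$: when both iterates lie in $B_R$, by the Lipschitz constant $C_R$ giving $C_R|D_n|^2\dd$; otherwise, by the dissipation estimate \eqref{E*} combined with the contractivity of $\pi^{(\dd)}$, again absorbed into $(1+C\dd)E_n$. All remaining cross and square terms are then bounded by Cauchy-Schwarz, using the moment bounds above.

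The main obstacle is the Itô-Taylor residual $\Theta_n$ when $\bb>1/2$: Cauchy-Schwarz alone produces $2|\E\<D_n,\Theta_n\>|\lesssim \dd^{3/2}\sqrt{E_n}\cdot d^{1/2}$, which under discrete Gronwall only yields $E_n\sim\dd$, insufficient to match $\dd^{2\bb}$. To rescue the order I would invoke $({\bf A}_5)$ and Taylor-expand
\begin{align*}
b(X_s^\xi)-b(X_{n\dd}^\xi)=\nn b(X_{n\dd}^\xi)\bigl(X_s^\xi-X_{n\dd}^\xi\bigr)+\mathrm{Rem}_n(s),
\end{align*}
with $|\mathrm{Rem}_n(s)|\lesssim(1+|X_s^\xi|^{\ell_0^\star}+|X_{n\dd}^\xi|^{\ell_0^\star})|X_s^\xi-X_{n\dd}^\xi|^2$. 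Substituting $X_s^\xi-X_{n\dd}^\xi=\int_{n\dd}^s b(X_r^\xi)\d r+\si(W_s-W_{n\dd})$ splits $\Theta_n$ into a drift piece of size $\dd^2$ and a stochastic piece $\nn b(X_{n\dd}^\xi)\int_{n\dd}^{(n+1)\dd}\!\si(W_s-W_{n\dd})\d s$; the latter vanishes in expectation against $D_n$ because $D_n$ and $\nn b(X_{n\dd}^\xi)$ are both $\mathscr F_{n\dd}$-measurable while the Brownian time-integral is independent and mean zero. Thus $|\E\<D_n,\Theta_n\>|\lesssim\dd^2\sqrt{E_n}\cdot d^{(1+\ell_0^\star)/2+\cdots}$, producing the required $\dd^{1+2}=\dd^3$ per-step contribution to $E_n$.

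Collecting all per-step contributions gives $E_{n+1}\le(1+C\dd)E_n+C\dd^{1+2(1\wedge\bb)}d^{\ell_\star}$, and discrete Gronwall together with the basic inequality $(1+C\dd)^n\le\e^{Cn\dd}$ yields \eqref{LE1}. The exponent $\ell_\star$ in \eqref{DD5} is the maximum of the polynomial degrees that arise: $\ell_0+l_2$ from the projection perturbation squared, $l_\bb^*$ from \eqref{EW2-}, and $2\ell_0+1+(1+\ell_0^\star)\mathds 1_{\{\bb>1/2\}}$ from the Taylor-expansion step, each tracked through Cauchy-Schwarz against the moment bounds of order $d^{p/2}$. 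The dimension-independence of the prefactor $C_0^*$ follows because all constants produced along the way depend only on the parameters appearing in $({\bf A}_0)$, $({\bf A}_5)$, $({\bf H}_1)$--$({\bf H}_5)$ and the noise intensity, but not on $d$.
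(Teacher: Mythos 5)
Your high-level plan (synchronous coupling from stationarity, one-step recursion, Taylor trick with $({\bf A}_5)$ and the martingale-cancellation when $\beta>1/2$, discrete Gronwall) matches the paper, but your one-step decomposition has a genuine gap. You isolate the drift difference as $\bigl(b(X_{n\dd}^\xi)-b(X_{n\dd}^{\dd,\xi})\bigr)\dd$ and propose to control the resulting cross term $2\dd\,\E\bigl\langle D_n,\,b(X_{n\dd}^\xi)-b(X_{n\dd}^{\dd,\xi})\bigr\rangle$ and the corresponding square term via $({\bf H}_1)$. But $({\bf H}_1)$ says nothing about $b$ itself: \eqref{E0} is a Lipschitz-type estimate for the composite $b^{(\dd)}\circ\pi^{(\dd)}$, and \eqref{E*} is a dissipativity estimate for the inner product $\bigl\langle\pi^{(\dd)}(x)-\pi^{(\dd)}(y),\,b^{(\dd)}(\pi^{(\dd)}(x))-b^{(\dd)}(\pi^{(\dd)}(y))\bigr\rangle$, again for the composite and with the projected difference $\pi^{(\dd)}(x)-\pi^{(\dd)}(y)$ on the left, not $x-y$. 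In the super-linear regime the only available control on the raw $b$ is $({\bf A}_0)$, which gives $|b(x)-b(y)|\le L_0(1+|x|^{\ell_0}+|y|^{\ell_0})|x-y|$; plugging this into your cross or square term produces $\dd\,\E\bigl[(1+|X_{n\dd}^\xi|^{\ell_0}+|X_{n\dd}^{\dd,\xi}|^{\ell_0})|D_n|^2\bigr]$, which Cauchy--Schwarz turns into $\dd\,(\cdots)\sqrt{\E|D_n|^4}$ --- and the recursion only controls $\E|D_n|^2$, not fourth moments, so it does not close.

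The fix is precisely the paper's rearrangement: write
$\phi^{(\dd)}(x)=x-\pi^{(\dd)}(x)+\dd\bigl(b(x)-b^{(\dd)}(\pi^{(\dd)}(x))\bigr)$
and
$\psi^{(\dd)}(x,y)=\pi^{(\dd)}(x)-\pi^{(\dd)}(y)+\bigl(b^{(\dd)}(\pi^{(\dd)}(x))-b^{(\dd)}(\pi^{(\dd)}(y))\bigr)\dd$,
so that $Z_{(n+1)\dd}=\phi^{(\dd)}(X_{n\dd}^\xi)+\psi^{(\dd)}(X_{n\dd}^\xi,X_{n\dd}^{\dd,\xi})+\int_{n\dd}^{(n+1)\dd}\bigl(b(X_s^\xi)-b(X_{n\dd}^\xi)\bigr)\d s$. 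Now the contraction term $\psi^{(\dd)}$ features $b^{(\dd)}\circ\pi^{(\dd)}$ at \emph{both} arguments, so \eqref{E6} and \eqref{E7} deliver the one-step pointwise bound $|\psi^{(\dd)}(x,y)|\le(1+C_R\dd)|x-y|$, cleanly yielding the factor $(1+C\dd)E_n$ for both cross and square contributions, while $\phi^{(\dd)}(X_{n\dd}^\xi)$ is a perturbation living entirely on the stationary trajectory and is handled by $({\bf H}_5)$ together with the $\pi_\8$-moment bounds of Lemma \ref{lemma1}. Your decomposition is algebraically equivalent to this one, but it leaves the ill-behaved raw drift difference as a leading term rather than absorbing it into the contraction; as written, it cannot be closed under the stated hypotheses.
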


\begin{proof}
Below, we shall stipulate $\delta\in(0,\delta_{\star\star}]$.
By invoking the invariance of $\pi_\8$,
 we have  for   $n\ge0,$
\begin{align*}
	\W_2\big(\pi_\8 P^{(\dd)}_{n\dd},\pi_\8\big)^2=\W_2\big(\pi_\8 P^{(\dd)}_{n\dd},\pi_\8P_{n\delta}\big)^2\le \E\big|Z_{n\delta}^{\delta,\xi}\big|^2,
\end{align*}
where $Z_{n\delta}^{\delta,\xi}:=X_{n\delta}^\xi-X_{n\delta}^{\delta,\xi}$. Hence,
 to achieve   \eqref{LE1}, it suffices to verify that there exists a  constant  $C_1^* >0$ such that for any $n\ge0,$
\begin{align}\label{ET-1}
\E\big|Z_{n\delta}^{\delta,\xi}\big|^2\le  C_1^*\e^{C_1^*n\delta}\delta^{2(1\wedge \beta)}d^{ \ell_\star }.
\end{align}

From \eqref{E1} and \eqref{E2}, it is easy to see that
\begin{equation}\label{^6^}
\begin{aligned}
Z_{(n+1)\delta}^{\delta,\xi}
&=X_{n\delta}^\xi-\pi^{(\delta)}(X_{n\delta}^{\delta,\xi})+\int_{n\delta}^{(n+1)\delta}\big(b(X_s^\xi)-b^{(\delta)}(\pi^{(\delta)}(X_{n\delta}^{\delta,\xi}))\, \big)\d s\\
&=\phi^{(\dd)}(X_{n\delta}^\xi)+\psi^{(\dd)}(X_{n\delta}^\xi,X_{n\delta}^{\delta,\xi}) +\int_{n\delta}^{(n+1)\delta}\big(b(X_s^\xi)-b(X_{n\delta}^\xi)\big)\,\d s,
\end{aligned}
\end{equation}
where the quantities $\phi^{(\dd)}$ and $\psi^{(\dd)}$ are defined respectively as follows:
for any $x,y\in\R^d,$
\begin{align*}
\psi^{(\dd)}(x,y):&=\pi^{(\dd)}(x)-\pi^{(\dd)}(y)+\big(b^{(\dd)}(\pi^{(\dd)}(x))- b^{(\dd)}(\pi^{(\dd)}(y))\big)\delta \\
\phi^{(\dd)}(x):&=x-\pi^{(\dd)}(x)+\dd(b(x)-b^{(\dd)}(\pi^{(\dd)}(x))).
\end{align*}

We first show  \eqref{ET-1} for the case   $\beta\in(0,1/2]$.  By means of the basic inequality:
\begin{align*}
(a+b+c)^2\le (1+2/\delta)a^2+(1+2\delta)b^2+(1+\delta+1/\delta)c^2,\quad a,b,c\in\R,
\end{align*}
along with   H\"older's inequality, we obtain that
\begin{equation}\label{KK}
\begin{split}
	\E\big|Z_{(n+1)\delta}^{\delta,\xi}\big|^2\le &  (1+ 2/\dd )\E\big|\phi^{(\dd)}(X_{n\delta}^\xi)\big|^2+(1+2\dd)\E\big|\psi^{(\dd)}(X_{n\delta}^\xi,X_{n\delta}^{\delta,\xi}) \big|^2\\
&+(1+\dd+\dd^2)\int_{n\delta}^{(n+1)\delta}\E\big|b(X_s^\xi)-b(X_{n\delta}^\xi)\big|^2\,\d s.
\end{split}
\end{equation}
By combining (${\bf A}_0$)  with (${\bf H}_5$) for $\alpha=2$ therein, there is a constant $C_2^*>0$ such that   for  $x\in\R^d$,
\begin{equation}\label{^9^}
\begin{aligned}
\big|\phi^{(\dd)}(x)\big|&\le \big|x-\pi^{(\dd)}(x)\big|+ \big|b(x)-b (\pi^{(\dd)}(x))\big|\delta +\big|b (\pi^{(\dd)}(x)) -b^{(\dd)}(\pi^{(\dd)}(x)) \big|\delta\\
&\le \big(1+L_0(1+|\pi^{(\dd)}(x)|^{\ell_0}+ |x|^{\ell_0}  )\delta\big)	\big|x-\pi^{(\dd)}(x)\big| +\lambda_{b}^* \big(1+|\pi^{(\dd)}(x)|^{l_\beta^{*}}\big)\delta^{1+\beta}\\
&\le C_2^*\big(1+|x|^{\ell_0+l_2 } + |x|^{l_\beta^{*}}\big)\delta^{2\wedge(1+\beta)}
\end{aligned}
\end{equation}
 where in the third inequality we used  $|\pi^{(\dd)}(x)|\le|x|$. Then,
by taking advantage of $\mathscr L_{X_t^\xi}=\pi_\8, ~t\ge0,$ and \eqref{EW2*},
there is a  constant $  C_3^*>0 $ such that
\begin{align}\label{^2^}
 (1+ 2/\dd )\E|\phi^{(\dd)}(X_{n\delta}^\xi)|^2\le   C_3^*\delta^{3\wedge(1+2\beta)} (1+
  d^{\ell_0+l_2}+d^{l_\beta^{*}} ).
\end{align}
Next, by virtue of \eqref{E6} and \eqref{E7}, it follows   that for all $x,y\in\R^d,$
\begin{align}\label{ER}
\big|\psi^{(\dd)}(x,y)\big| \le (1+C_R\delta)|x-y|
\end{align}
so   there exists a   constant $  C_4^*>0$ satisfying that
\begin{equation}\label{^3^}
\begin{split}
(1+2\dd)\E\big|\psi^{(\dd)}(X_{n\delta}^\xi,X_{n\delta}^{\delta,\xi}) \big|^2&\le(1+2\dd)(1+C_R\delta)^2\E\big|Z_{n\delta}^{\delta,\xi}\big|^2  \le (1+  C_4^*\delta)\E\big|Z_{n\delta}^{\delta,\xi}\big|^2.
\end{split}
\end{equation}
By invoking (${\bf A}_0$), besides H\"older's inequality, it is ready to see   that for some constant $C_5^*>0,$
\begin{equation}\label{^0^}
\begin{aligned}
 \int_{n\delta}^{(n+1)\delta}\E |b(X_s^\xi)-b(X_{n\delta}^\xi) |^2\,\d s&\le 3L_0^2 \int_{n\delta}^{(n+1)\delta}\E\big( (1+|X_s^\xi|^{2\ell_0}+|X_{n\delta}^\xi|^{2\ell_0} )  |X_s^\xi-X_{n\delta}^\xi |^2\big)\,\d s\\
 &\le 3L_0^2 \int_{n\delta}^{(n+1)\delta} \Big(1+\big(\E|X_s^\xi|^{4\ell_0}\big)^{\frac{1}{2}}+\big(\E|X_{n\delta}^\xi|^{4\ell_0}\big)^{\frac{1}{2}} \Big)\\
 &\qquad\qquad\quad\quad\quad \times\big(\E |X_s^\xi-X_{n\delta}^\xi|^4\big)^{\frac{1}{2}}\,\d s\\
 &\le  C_5^*  (1+ d^{2\ell_0+1}   )\delta^2,
\end{aligned}
\end{equation}
where in the last line we employed the fact that, for any $q\ge2$, there is a constant    $C_6^*=C_6^*(q)>0$ such that
\begin{align}\label{EE3}
\E|X_s^\xi-X_{n\delta}^\xi|^q\le C_6^*\big(1+
  d^{\frac{1}{2}q(1+\ell_0)}\big) \delta^{\frac{q}{2}}.
\end{align}
Correspondingly, we reach that
\begin{align}\label{^4^}
(1+\dd+\dd^2)\int_{n\delta}^{(n+1)\delta}\E\big|b(X_s^\xi)-b(X_{n\delta}^\xi)\big|^2\,\d s&\le 3  C_5^*\delta^2   (1+   d^{2\ell_0+1}
 ).
\end{align}
 By plugging \eqref{^2^}, \eqref{^3^} as well as  \eqref{^4^}  into \eqref{KK}, we derive  that there exists a  constant  $  C_7^* >0 $ such that,
 \begin{align}\label{^8^}
 \E\big|Z_{(n+1)\delta}^{\delta,\xi}\big|^2\le(1+  C_4^*\delta)\E\big|Z_{n\delta}^{\delta,\xi}\big|^2+   C_7^* d^{\ell_\star}\delta^{2\wedge(1+2\beta)}.
 \end{align}
 Thereafter, employing an inductive argument and using $|Z_{0}^{\delta,\xi}|=0$ as well as the fact that $1+r\le\e^r,r\ge0,$
  yields that
 \begin{equation} \label{DD2}
 \begin{split}
  \E\big|Z_{ n \delta}^{\delta,\xi}\big|^2 \le C_7^* d^{\ell_\star}\delta^{2\wedge(1+2\beta)}\sum_{i=0}^{n-1}(1+  C_4^*\delta)^i
  &\le C_7^* d^{\ell_\star}\delta^{2\wedge(1+2\beta)}\int_0^n\e^{\lfloor s\rfloor C_4^*\delta}\,\d s\\
  &\le \frac{C_7^* d^{\ell_\star}}{C_4^*}\delta^{2\beta}\e^{ C_4^*n\delta},
 \end{split}
 \end{equation}
 where   $\lfloor\cdot\rfloor$ means the floor function.

In the sequel, we aim at proving \eqref{ET-1}  for the setting $\beta>1/2.$
By making use of the basic inequality: $2ab\le \delta a^2+\delta^{-1}b^2$ for $a,b>0$, as well as H\"older's inequality once again,
we deduce that  for $\delta\in(0,1],$
\begin{equation}\label{EW1}
\begin{split}
\big|Z_{(n+1)\delta}^{\delta,\xi}\big|^2
&\le(1+2/\delta)\big|\phi^{(\dd)}(X_{n\delta}^\xi)\big|^2+(1+\delta)\big|\psi^{(\dd)}(X_{n\delta}^\xi,X_{n\delta}^{\delta,\xi}) \big|^2\\
&\quad+(\delta+\delta^2) \int_{n\delta}^{(n+1)\delta}\big|b(X_s^\xi)-b(X_{n\delta}^\xi)\big|^2\,\d s\\
&\quad+2\int_{n\delta}^{(n+1)\delta}\big\<\psi^{(\dd)}(X_{n\delta}^\xi,X_{n\delta}^{\delta,\xi}),b(X_s^\xi)-b(X_{n\delta}^\xi)\big\>\,\d s.
\end{split}
\end{equation}
Hereinafter, we intend  to quantify respectively  the four terms on the right hand side of \eqref{EW1}. Since  the first three terms have been handled respectively in \eqref{^2^}, \eqref{^3^} and   \eqref{^0^},   we  focus on the estimate on the last term in the following analysis.
  Note from the fundamental theorem of calculus that
\begin{align*}
&\int_{n\delta}^{(n+1)\delta}\<\psi^{(\dd)}(X_{n\delta}^\xi,X_{n\delta}^{\delta,\xi}),(b(X_s^\xi)-b(X_{n\delta}^\xi))\>\,\d s\\
&=\int_{n\delta}^{(n+1)\delta}\int_0^1\frac{\d}{\d u}\<\psi^{(\dd)}(X_{n\delta}^\xi,X_{n\delta}^{\delta,\xi}), b(X_{n\delta}^\xi+u(X_s^\xi-X_{n\delta}^\xi))\>\,\d u\,\d s\\
&=\int_{n\delta}^{(n+1)\delta}\int_0^1 \<\psi^{(\dd)}(X_{n\delta}^\xi,X_{n\delta}^{\delta,\xi}), \nn b(X_{n\delta}^\xi+u(X_s^\xi-X_{n\delta}^\xi))\cdot(X_s^\xi-X_{n\delta}^\xi))\>\,\d u\,\d s\\
&= \int_{n\delta}^{(n+1)\delta} \big\<\psi^{(\dd)}(X_{n\delta}^\xi,X_{n\delta}^{\delta,\xi}),\nn b(X_{n\delta}^\xi)\cdot(X_s^\xi-X_{n\delta}^\xi)   \big\>\,\d s\\
&\quad+\int_{n\delta}^{(n+1)\delta}\int_0^1\big\<\psi^{(\dd)}(X_{n\delta}^\xi,X_{n\delta}^{\delta,\xi}),\big(\nn b(X_{n\delta}^\xi+u (X_s^\xi-X_{n\delta}^\xi))-\nn b(X_{n\delta}^\xi )\big)  (X_s^\xi-X_{n\delta}^\xi)\big\>\,\d u\,\d s\\
&=:\Pi_1^{(\delta)}(n\delta)+\Pi_2^{(\delta)}(n\delta),
\end{align*}
where the second identity is valid by means of the chain rule, and
the third identity is available thanks to the addition-subtraction technique.

Below, we intend to estimate the terms $\Pi_1^{(\delta)}$ and $\Pi_2^{(\delta)}$, one by one. On the one hand, by taking advantage of  $({\bf A}_5)$, \eqref{ER} and
\begin{align*}
X_s^\xi-X_{n\delta}^\xi=\int_{n\delta}^sb(X_u^\xi)\,\d u+W_s-W_{n\delta},\quad s\in[n\delta,(n+1)\delta],
\end{align*}
 we infer from \eqref{EW2*}  that there exist some constants $C_1^\star,C_2^\star>0$ such that for any $\delta\in(0,1],$
\begin{align*}
\E\Pi_1^{(\delta)}(n\delta)&=\int_{n\delta}^{(n+1)\delta}\int_{n\delta}^s\E \big\<\psi^{(\dd)}(X_{n\delta}^\xi,X_{n\delta}^{\delta,\xi}),\nn b(X_{n\delta}^\xi)\cdot b(X_u^\xi) \big\>\,\d u \,\d s\\
&\le \int_{n\delta}^{(n+1)\delta}\int_{n\delta}^s\E \big|\psi^{(\dd)}(X_{n\delta}^\xi,X_{n\delta}^{\delta,\xi})\big|\cdot\big|\nn b(X_{n\delta}^\xi)\cdot b(X_u^\xi) \big|\,\d u \,\d s\\
&\le C_1^\star\int_{n\delta}^{(n+1)\delta}\int_{n\delta}^s\E\Big(  \big| Z_{n\delta}^{\delta,\xi} \big|   \big(1+|X_{n\delta}^\xi|^{\ell_0}\big) \big(1+|X_u^\xi|^{1+\ell_0}\big)\Big)\,\d u \,\d s\\
&\le \frac{1}{2} C_1^\star\int_{n\delta}^{(n+1)\delta}\int_{n\delta}^s\E\Big( \delta^{-1}\big| Z_{n\delta}^{\delta,\xi} \big|^2 +\delta \big(1+|X_{n\delta}^\xi|^{\ell_0}\big)^2 \big(1+|X_u^\xi|^{1+\ell_0}\big)^2\Big)\,\d u \,\d s\\
&\le \frac{1}{2}C_1^\star\delta\E\big|Z_{n\delta}^{\delta,\xi}\big|^2+C_2^\star\big(1+
  d^{2\ell_0+1}\big)\delta^3,
\end{align*}
where in the identity we exploited the fact that the increment $W_s-W_{n\delta}$ is independent of $\mathscr F_{n\delta}$ and  $\E(W_s-W_{n\delta})={\bf0}$. On the other hand,   by means of (${\bf A}_5$), \eqref{ER} as well as \eqref{EE3}, there exist constants $C_3^\star,C_{4}^\star>0$ such that for any $\delta\in(0,1],$
\begin{align*}
\E\Pi_2^{(\delta)}(n\delta)&\le	 C_3^\star\int_{n\delta}^{(n+1)\delta}  \E\Big( \big| Z_{n\delta}^{\delta,\xi} \big| \big(1+\big|X_{n\delta}^\xi\big|^{\ell^\star_0}+\big|X_s^\xi\big|^{\ell^\star_0}\big)   \big|X_s^\xi-X_{n\delta}^\xi\big|^2\Big)  \,\d s\\
&\le \frac{1}{2}C_3^\star\int_{n\delta}^{(n+1)\delta}  \E\Big( \big| Z_{n\delta}^{\delta,\xi} \big|^2+ \big(1+\big|X_{n\delta}^\xi\big|^{\ell^\star_0}+\big|X_s^\xi\big|^{\ell^\star_0}\big)^2   \big|X_s^\xi-X_{n\delta}^\xi\big|^4\Big)  \,\d s\\
&\le \frac{1}{2}C_3^\star\delta\E\big| Z_{n\delta}^{\delta,\xi} \big|^2+C_4^\star\big(
 1+d^{2(\ell_0+1)+\ell_0^\star}  \big)\delta^3.
\end{align*}
Therefore, we deduce that
\begin{equation}\label{^5}
\begin{aligned}
&\int_{n\delta}^{(n+1)\delta}\E\<\psi^{(\dd)}(X_{n\delta}^\xi,X_{n\delta}^{\delta,\xi}),(b(X_s^\xi)-b(X_{n\delta}^\xi))\>\,\d s\\
& \le\frac{1}{2}\big(C_1^\star\vee C_3^\star\big)\delta\E\big| Z_{n\delta}^{\delta,\xi} \big|^2 +2\big(C_2^\star\vee C_{4}^\star\big)\big(1+
 d^{2(\ell_0+1)+\ell_0^\star}  \big)\delta^3.
\end{aligned}
\end{equation}
Now, substituting \eqref{^2^}, \eqref{^3^}, \eqref{^0^} as well as \eqref{^5} into \eqref{EW1} enables us to show  that there are constants $C_{5}^\star,C_{6}^\star>0 $ such that for all $\delta\in(0,1],$
 \begin{align*}
 \E\big|Z_{(n+1)\delta}^{\delta,\xi}\big|^2&\le(1+C_{5}^\star\delta)\E\big|Z_{n\delta}^{\delta,\xi}\big|^2+  C_{6}^\star\delta^{3\wedge(1+2\beta)} d^{\ell_\star}.
 \end{align*}
Whence, \eqref{ET-1} follows for the case $\beta>1/2$ by repeating the procedure to derive \eqref{DD2}.
\end{proof}

With Lemma  \ref{lemma1},  Lemma \ref{lemma3} as well as Lemma \ref{lemma2}  at hand, we aim to complete the proof of Theorem \ref{IPM}.

\begin{proof}[Proof of Theorem \ref{IPM}]
 By virtue of $({\bf H}_4)$,
 $(X_t)_{t\ge0}$ admits an IPM  $\pi_\8,$  which is $L^p$-integrable for any $p>0$; see Lemma \ref{lemma1} for more details. In the sequel, we stipulate $\delta\in(0,\delta_2^{\star\star}]$.
 Let  $\pi_\8$ and ${\tt \pi}_\8$ be two IPMs of $(X_t)_{t\ge0}$. By the triangle inequality,   one has
 \begin{align*}
 	\W_2(\pi_\8,{\tt \pi}_\8)\le \W_2(\pi_\8,{\pi}^{(\dd)}_\8)+\W_2(\pi^{(\dd)}_\8,{\tt \pi}_\8).
 \end{align*}
Whence, the uniqueness of IPMs for  $(X_t)_{t\ge0}$ follows once the assertion \eqref{E6-1} has been claimed.

   Obviously, with the help of  \eqref{EW2-}, it follows that for all $\delta\in(0,1],$
      $$ |b^{(\delta)}({\bf0})|< \big|b^{(\dd)}({\bf 0})-b({\bf 0})\big|+|b({\bf 0})|\le \ll_b^*+|b({\bf 0})|.
      $$
Therefore,   we have for all $x\in\R^d,$
\begin{align*}
 \<\pi^{(\delta)}(x),b^{(\delta)}(\pi^{(\delta)}(x))\>&\le\<\pi^{(\delta)}(x),b^{(\delta)}(\pi^{(\delta)}(x))-b^{(\delta)}({\bf0})\>+\big(\ll_b^*+|b({\bf 0})|\big)|\pi^{(\delta)}(x)|.
 \end{align*}
 This, along with \eqref{E*}, yields that for any $x\in \R^d$ with $|x|>R,$
 \begin{align*}
 \<\pi^{(\delta)}(x),b^{(\delta)}(\pi^{(\delta)}(x))\>&\le-\frac{1}{2}K_R^*|\pi^{(\delta)}(x)|^2+\frac{1}{2K_R^*}\big(\ll_b^*+|b({\bf 0})|\big)^2.
 \end{align*}
Additionally, by means of $|\pi^{(\delta)}(x)|\le|x|$, $({\bf A}_0)$, as well as \eqref{EW2-}, we obtain that for any $\delta\in(0,1]$ and $x\in\R^d$ with $|x|\le R,$
\begin{align*}
 \<\pi^{(\delta)}(x),b^{(\delta)}(\pi^{(\delta)}(x))\>&\le R|b^{(\delta)}(\pi^{(\delta)}(x))-b (\pi^{(\delta)}(x))|+R|b (\pi^{(\delta)}(x))-b({\bf0})|+R|b({\bf 0})|\\
 &\le R \ll_b^* (1+R^{l_\beta^{*}}) +R\big(L_0(1+R^{\ell_0})R+|b({\bf 0})|\big).
 \end{align*}
Therefore,  the hypothesis  \eqref{WW*} is valid for  $ \ll_b^\star=\ff12K_R^*$ so  Lemma \ref{lemma3} yields that $(X_{n\delta}^{\delta})_{n\ge1}$ has an IPM, denoted by $\pi_\8^{(\delta)}$, and
Theorem \ref{thm} further implies that $\pi_\8^{(\delta)}$ is in fact the unique IPM of $(X_{n\delta}^{\delta})_{n\ge1}$.

 By invoking the invariance of $\pi_\8$ and $\pi_\8^{(\delta)}$, it follows readily from Theorem \ref{thm} that for any   $n\ge1$ and $\dd\in(0,\dd_2^{\star\star}],$
 \begin{equation}\label{EW3*}
 \begin{split}
	\W_2\big(\pi_\8,\pi_\8^{(\delta)}\big)&=\W_2\big(\pi_\8P_{n\delta},\pi_\8^{(\delta)}P_{n\delta}^{(\delta)}\big)\\
&\le \W_2\big(\pi_\8 P^{(\dd)}_{n\dd},\pi_\8^{(\delta)} P^{(\dd)}_{n\dd}\big)+\W_2\big(\pi_\8 P_{n\dd},\pi_\8 P^{(\dd)}_{n\dd}\big)\\
&\le C_0\e^{-\ff{1}2(\lambda_1\wedge \lambda_2) n\delta}\W_2\big(\pi_\8,\pi_\8^{(\delta)}\big)+\W_2\big(\pi_\8 P_{n\dd},\pi_\8 P^{(\dd)}_{n\dd}\big),
\end{split}
\end{equation}
where the constants $C_0,\lambda_1,\lambda_2$ were defined in  \eqref{WQ*}.
Now, we choose  the integer
$$n_\delta=\lfloor 2\ln (1+2C_0)/((\lambda_1\wedge \lambda_2)\delta)\rfloor+1$$  such that $C_0\e^{-\ff{1}2(\lambda_1\wedge \lambda_2) n_\delta\delta}\le\frac{1}{2}$. As a result, by taking $n=n_\delta$ in \eqref{EW3*},
 we deduce from Lemma \ref{lemma2} that
\begin{align*}
	\W_2\big(\pi_\8,\pi_\8^{(\delta)}\big)
&\le \frac{1}{2}\W_2\big(\pi_\8,\pi_\8^{(\delta)}\big)+\W_2\big(\pi_\8 P_{n_\delta\dd},\pi_\8 P^{(\dd)}_{n_\delta\dd}\big)\\
 &\le \frac{1}{2}\W_2\big(\pi_\8,\pi_\8^{(\delta)}\big)+(C_0^*)^{\frac{1}{2}}\e^{\frac{1}{2}C_0^*n_\delta\delta}  \delta^{1\wedge \beta }d^{\ff{\ell_\star}2}
\end{align*}
so that
\begin{align}\label{DD3}
\W_2\big(\pi_\8,\pi_\8^{(\delta)}\big)&\le 2(C_0^*)^{\frac{1}{2}}\e^{\frac{1}{2}C_0^*n_\delta\delta}  \delta^{1\wedge \beta }d^{\ff{\ell_\star}2}\le 2(C_0^*)^{\frac{1}{2}}\e^{\ff12 C_0^*}(1+2C_0)^{\ff{	C_0^*}{\ll_1\wedge \ll_2}}  \delta^{1\wedge \beta }d^{\ff{\ell_\star}2},
\end{align}
where the second inequality holds true by invoking
$$n_\delta\delta\le   \frac{2\ln (1+2C_0)}{\lambda_1\wedge \lambda_2}  +1.$$

Next, for notation brevity, we set
\begin{align*}
C_*&:= 8(1+ R) \big( (3C_R(K_R^*+96 C_R)(1+R))   \vee     (\psi_1( R)K_R^*)  \vee(12C_R  \psi_1(R_0) )  \big)/{K_R^*},\\
C_{**}&:=24C_R(K_R^*+96 C_R)(1+R)^2/{K_R^*}.
\end{align*}
By virtue of  (${\bf H}_3$), we   obviously have $d\si^2>C_*.$ Then,
according to the definition of $\lambda_1$ given in \eqref{WQ*}, we find that
\begin{align*}
\lambda_1   =  3C_R\Big(\frac{2 }{1+ C_{**}/{(d\si^2)}} -1\Big)&> \frac{3C_R(C_*-C_{**}) }{C_* +C_{**} }>0,
\end{align*}
where the last inequality is valid by noting  $C_*\ge C_{**}$. As a consequence, by recalling that $\lambda_2$ is a positive constant which is unrelated to the dimension $d,$
the quantity $\e^{\frac{1}{2}C_0^*n_\delta\delta}$ can be dominated by a constant independent of the dimension $d$. Finally,
the  assertion \eqref{E6-1} is available by taking \eqref{DD3} into consideration.

Via the triangle inequality and the invariance of $\pi_\8^{(\delta)}$, besides  Theorem \ref{thm}, it is easy to see  that
\begin{align*}
\W_2(\mu P^{(\dd)}_{n\dd},\pi_\8)&\le \W_2(\mu P^{(\dd)}_{n\dd},\pi_\8P^{(\dd)}_{n\dd})+\W_2(\pi_\8 P^{(\dd)}_{n\dd},\pi_\8^{(\delta)}P^{(\dd)}_{n\dd})+\W_2(\pi_\8^{(\delta)},\pi_\8)\\
&\le
 C_0\e^{-\ff{1}2(\lambda_1\wedge \ll_2) n\delta}\mathbb W_2(\mu,\pi_\8)+(C_0+1)\W_2(\pi_\8^{(\delta)},\pi_\8).
\end{align*}
Accordingly, \eqref{E6-2} is reachable  by taking \eqref{E6-1} into consideration.
\end{proof}

As a direct application of  Theorem \ref{IPM}, the proof of Theorem \ref{cor} can be finished.

\begin{proof}[ Proof of Theorem \ref{cor}]
  In  order to complete the proof of Theorem \ref{cor}, it suffices to check all the prerequisites imposed in Theorem \ref{IPM}, one by one.

Under $({\bf A}_0)$  and (${\bf A}_4$), $(X_t)_{t\ge0}$ has an IPM by applying Lemma \ref{lemma1}. Furthermore, $({\bf A}_0)$ with $\ell_0=0$ and (${\bf A}_4$) imply that Wang's Harnack inequality holds true so $(X_t)_{t\ge0}$ has a unique IPM; see, for example, \cite[Theorem 1.4.1]{Wang}. Similarly,  $(X_t)_{t\ge0}$ also possess  a unique IPM under $({\bf A}_0)$ and \eqref{EE11-}.

Regarding the EM scheme \eqref{WW},  under $({\bf A}_0)$ with $\ell_0=0$ and (${\bf A}_4$), it is easy to see that (${\bf H}_1$) and (${\bf H}_2$)
are satisfied with $\theta=0,$ $\delta_r^*=1$, and
\begin{align*}
R=R_*,\quad  C_R=K_R=3L_0,\quad K_R^*=L_5,\quad \psi_1(r)=|b({\bf0})|+2L_0r,\quad \psi_2(r)\equiv2L_0.
\end{align*}
Furthermore, (${\bf H}_4$)   holds true with
\begin{align*}
\lambda_b= L_5/2,~C_b=L_5R_*^2+R_*\big(2L_0R_*+| b({\bf 0})|\big)
+ | b({\bf 0})|^2/{(2L_5)}
\end{align*}
and  (${\bf H}_5$) with $\beta=1$ and $l_\alpha=0$ is satisfied automatically due to $\pi^{(\delta)}={\rm Id}$ and $b^{(\delta)}=b.$ Consequently, as for the EM scheme \eqref{WW},  the proof of Theorem \ref{cor} is done by applying Theorem \eqref{IPM} with $\ell_0=\ell_0^\star=l_\alpha=l_\beta^*=0$.

Since Assumptions $({\bf A}_1)$ and $({\bf H}_1)$-$({\bf H}_3)$ have been examined in the proof of Theorem \ref{thm-0}, it is sufficient to check Assumptions $({\bf H}_4)$ and $({\bf H}_5)$ for the TEM scheme \eqref{EW0-} and the PEM scheme \eqref{EWW}, respectively. By virtue of Lemma \ref{lem3}, Assumption $({\bf H}_5)$ has been verified for both approximation schemes.
So, it remains to justify Assumption $({\bf H}_4)$, which can be guaranteed readily  under $({\bf A}_0)$ and \eqref{EE11-} or $({\bf A}_4)$.

On the basis of the preceding  analysis, the proof of Theorem \ref{cor} is complete.
\end{proof}

\section{Proof of Theorem \ref{cor1-1}}\label{sec4}
Throughout this section,   let $(X_t^\delta)_{t\ge0}$ be the corresponding  continuous-time version of the TEM scheme \eqref{RT3}, which is   given as below: for $t_\delta:=\lfloor t/\delta\rfloor \delta$,
\begin{align}\label{WQ}
		\d X^{\dd}_t=\bar b^{(\dd)}(X^{\dd}_{t_\dd})\,\d t+\si \,\d W_t.
	\end{align}
Before the accomplishment on the proof of Theorem \ref{cor1-1}, we prepare the following crucial lemma, which reveals that the convergence rate between the exact solution and the numerical one determined by \eqref{WQ} is $1$ in an interval with finite length.

\begin{lemma}\label{lemma4}
Assume that   $({\bf A}_0)$, $({\bf A}_5)$ and \eqref{EE11-} hold.
Then, there is   a constant   $  c^\star >0$
 such that for any $t\ge s\ge0$ with $|t-s|\le1$, and $\delta\in(0,\delta^\star_3],$
\begin{equation}\label{EE4}
	\E	|X_t-X_{t}^{\dd}|^2\le  c^\star\Big( \big(
 d^{\ell_{\star\star}}
	+\E|X_0|^{4(1+\ell_0)}+\E|X_0^\delta|^{p_\star}\big)\delta^2+\E|X_s-X_s^{\dd}|^2\Big)
\end{equation}
 provided $X_0\in L^{4(1+\ell_0)}(\OO\to\R^d,\mathscr F_0,\P)$
and $X_0^\delta\in L^{p_\star}(\OO\to\R^d,\mathscr F_0,\P)$, where positive constants $\delta_3^\star, p_\star, \ell_{\star\star}$ were defined in \eqref{DD6}.
\end{lemma}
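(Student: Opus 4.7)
The plan is to work with the difference $Z_t := X_t - X_t^{\dd}$. Since $(X_t)$ and $(X_t^\dd)$ are driven by the same Brownian motion and share the same constant diffusion coefficient, $Z_t$ has absolutely continuous paths with
\[
\dot Z_t = b(X_t) - \bar b^{(\dd)}(X_{t_\dd}^\dd), \qquad \ff{\d}{\d t}|Z_t|^2 = 2\<Z_t,\, b(X_t) - \bar b^{(\dd)}(X_{t_\dd}^\dd)\>,
\]
no stochastic integral appears. I would then split the drift difference into three pieces,
\[
b(X_t) - \bar b^{(\dd)}(X_{t_\dd}^\dd) = \underbrace{[b(X_t) - b(X_t^\dd)]}_{\mathrm{(i)}} + \underbrace{[b(X_t^\dd) - b(X_{t_\dd}^\dd)]}_{\mathrm{(ii)}} + \underbrace{[b(X_{t_\dd}^\dd) - \bar b^{(\dd)}(X_{t_\dd}^\dd)]}_{\mathrm{(iii)}},
\]
take expectations, bound each contribution, and finally invoke Gronwall's inequality on the interval $[s,t]$ of length at most one.

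For piece (i), I use the one-sided dissipativity \eqref{EE11-} on the set $\{|X_t|\vee |X_t^\dd|>R^*\}$, where the inner product with $Z_t$ is non-positive, and Assumption $({\bf A}_0)$ together with Cauchy--Schwarz on the complementary compact region, which delivers $\E\<Z_t,\,b(X_t)-b(X_t^\dd)\>\le C_1\E|Z_t|^2$ for an absolute constant $C_1$. Piece (iii), the taming error, is handled by Lemma \ref{lem3} (specialised to $\bar b^{(\dd)}$ with $\beta=1$ and $l_\beta^{*}=1+3\ell_0$), which provides $|b(X_{t_\dd}^\dd)-\bar b^{(\dd)}(X_{t_\dd}^\dd)|\le 2(L_0\vee|b({\bf 0})|)(1+|X_{t_\dd}^\dd|^{1+3\ell_0})\dd$. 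Young's inequality then bounds its contribution by $\E|Z_t|^2+C_2(1+\E|X_{t_\dd}^\dd|^{2(1+3\ell_0)})\dd^2$; the moment is controlled uniformly on bounded intervals via \eqref{HB0}, and this is the source of the exponents $3\ell_0+1$ in $p_\star$ and $1+3\ell_0$ in $\ell_{\star\star}$.

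Piece (ii), the one-step error, is the decisive one. A crude application of $({\bf A}_0)$ together with $\E|X_t^\dd-X_{t_\dd}^\dd|^2=O(\dd)$ only yields $\E|Z_t|^2=O(\dd)$, which corresponds to the suboptimal strong rate $\dd^{1/2}$. To upgrade this to rate $\dd$ I exploit that the noise is additive and Taylor-expand via $({\bf A}_5)$:
\[
b(X_t^\dd)-b(X_{t_\dd}^\dd) = \nn b(X_{t_\dd}^\dd)\,\Delta_t + R_t, \qquad \Delta_t := \bar b^{(\dd)}(X_{t_\dd}^\dd)(t-t_\dd) + \si(W_t-W_{t_\dd}),
\]
with $|R_t|\le L_6(1+|X_t^\dd|^{\ell_0^\star}+|X_{t_\dd}^\dd|^{\ell_0^\star})|\Delta_t|^2$. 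The deterministic component of $\Delta_t$ contributes $O(\dd^2)$ after Young's inequality. For the Brownian component I split $Z_t = Z_{t_\dd} + (Z_t-Z_{t_\dd})$: the expectation $\E\<Z_{t_\dd},\,\nn b(X_{t_\dd}^\dd)\si(W_t-W_{t_\dd})\>$ \emph{vanishes}, since $Z_{t_\dd}$ and $\nn b(X_{t_\dd}^\dd)$ are $\mathscr F_{t_\dd}$-measurable while $W_t-W_{t_\dd}$ has mean zero, and the remaining cross term involving $Z_t-Z_{t_\dd}$ is handled by Cauchy--Schwarz once one notes $|Z_t-Z_{t_\dd}|\le\int_{t_\dd}^t|\dot Z_u|\,\d u$ is of order $\dd$ times polynomial moments. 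The quadratic remainder $R_t$ contributes at most $C\dd^2(1+\E|X_{t_\dd}^\dd|^{q_\star}+\E|X_t^\dd|^{q_\star})$ for $q_\star$ determined by $\ell_0^\star$ and the growth of $\bar b^{(\dd)}$, after Cauchy--Schwarz, the fourth-moment bound $\E|\Delta_t|^4 = O(\dd^2 d^2)$, and the moment estimates \eqref{HB0} and \eqref{WW2}; this is the source of the $4\vee(2\ell_0^\star)$ appearing inside $p_\star$ and of the summand $\ell_0^\star+2$ inside $\ell_{\star\star}$.

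Assembling (i)--(iii) and taking expectations yields, for every $r\in[s,t]$,
\[
\ff{\d}{\d r}\E|Z_r|^2 \le C_3\E|Z_r|^2 + C_4\big(d^{\ell_{\star\star}}+\E|X_0|^{4(1+\ell_0)}+\E|X_0^\dd|^{p_\star}\big)\dd^2,
\]
where the moments of $X_r$ used to bound the prefactors in pieces (i)--(ii) are obtained from Lemma \ref{lemma1} (the exponent $4(1+\ell_0)$ encodes the fourth-moment estimate for $\E|X_s^\xi-X_{r}^\xi|^2$ with an $\ell_0$-polynomial prefactor) and the moments of $X_r^\dd$ from Lemma \ref{lemma3}, both uniformly on a time horizon of length $\le 1$. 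Gronwall's inequality on $[s,t]$ then delivers \eqref{EE4} with dimension-independent constants. The principal obstacle lies in piece (ii): extracting the extra half-power of $\dd$ from the mean-zero property of $W_t-W_{t_\dd}$ requires the splitting $Z_t=Z_{t_\dd}+(Z_t-Z_{t_\dd})$ and a careful juggling of super-linear polynomial weights against Brownian increments, which is precisely why the higher moment threshold $p_\star$ and the dimension exponent $\ell_{\star\star}$ recorded in \eqref{DD6}, as well as the restriction $\dd\le\dd_3^\star$, enter the statement.
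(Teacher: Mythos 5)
Your decomposition and overall strategy track the paper's proof very closely: the same random-ODE formulation for $Z_t=X_t-X_t^\delta$, the same three-way split into dissipativity / one-step error / taming error, the same Taylor expansion of the one-step error via $({\bf A}_5)$, and the same observation that the cross term $\E\<Z_{u_\delta},\nn b(X_{u_\delta}^\delta)\si(W_u-W_{u_\delta})\>$ vanishes by $\mathscr F_{u_\delta}$-measurability. The accounting of which pieces generate $p_\star$ and $\ell_{\star\star}$ is also essentially the paper's.

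There is, however, a genuine gap in your treatment of the residual Brownian cross term involving $Z_u-Z_{u_\delta}$. Applying Cauchy--Schwarz directly with $(\E|Z_u-Z_{u_\delta}|^2)^{1/2}=O(\delta)$ (which is the best one can say, since the integrand $b(X_r)-\bar b^{(\delta)}(X_{u_\delta}^\delta)$ is $O(1)$, not small) and $(\E|\nn b(X_{u_\delta}^\delta)\si(W_u-W_{u_\delta})|^2)^{1/2}=O((d\delta)^{1/2})$ yields only $O(\delta^{3/2})$ per time point, hence $O(\delta^{3/2}(t-s))$ after integration and $\E|Z_t|^2=O(\delta^{3/2})$ after Gronwall — the suboptimal strong rate $\delta^{3/4}$, not the claimed $\delta$. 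The missing half power is recovered in the paper by decomposing once more: since $r\mapsto \bar b^{(\delta)}(X_{u_\delta}^\delta)$ is constant on $[u_\delta,u]$, one writes
\[
Z_u-Z_{u_\delta}=\int_{u_\delta}^u\big(b(X_r)-b(X_{u_\delta})\big)\,\d r+(u-u_\delta)\big(b(X_{u_\delta})-\bar b^{(\delta)}(X_{u_\delta}^\delta)\big).
\]
The second summand is $\mathscr F_{u_\delta}$-measurable, so its inner product with $\nn b(X_{u_\delta}^\delta)(W_u-W_{u_\delta})$ has zero expectation (the paper's $\Gamma_{44}$). For the first, the integrand itself is now small in mean square, $\E|b(X_r)-b(X_{u_\delta})|^2=O(\delta)$ for $|r-u_\delta|\le\delta$, so Cauchy--Schwarz together with the length-$\delta$ inner $\d r$ integral gives $O(\delta\cdot\delta^{1/2}\cdot\delta^{1/2})=O(\delta^2)$ (the paper's $\Gamma_{43}$). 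Without this further split, your argument as written loses a factor $\delta^{1/2}$ and cannot close to the $\delta^2$ bound required in \eqref{EE4}.
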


\begin{proof}
Below,  we stipulate $t\ge s\ge0$ and  set   $Z_t:=X_t-X_t^\delta, t\ge0.$ Obviously, $(Z_t)_{t\ge0}$
solves the random ODE:
\begin{align*}
\d Z_t=\big(b(X_t)-\bar b^{(\dd)}(X^{\dd}_{t_\dd})\big)\,\d t.
\end{align*}
Then, the chain rule, along with the addition-subtraction trick,  enables us to derive that
\begin{align*}
|Z_t|^2
&=|Z_s|^2+2\int_s^t\<Z_u,b(X_u)-b(X^{\dd}_u)\>\,\d u +2\int_s^t\<Z_u,  b (X^{\dd}_{u_\dd})-\bar b^{(\dd)}(X^{\dd}_{u_\dd})\>\,\d u\\
&\quad+2\int_s^t\<Z_u, b(X^{\dd}_u)-b (X^{\dd}_{u_\dd})-\nn b(X^{\dd}_{u_\dd})(X_u^\delta-X^{\dd}_{u_\dd})\>\,\d u\\
&\quad+2\int_s^t\<Z_u, \nn b(X^{\dd}_{u_\dd})(X_u^\delta-X^{\dd}_{u_\dd})\>\,\d u\\
&=:|Z_s|^2+\sum_{i=1}^4\Gamma_i(s,t,\delta)\\
&=|Z_s|^2+\sum_{i=1}^3\Gamma_i(s,t,\delta) +2\int_s^t\<Z_u, \nn b(X^{\dd}_{u_\dd})\bar b^{(\delta)}(X_{u_\delta}^\delta)\>(u-u_\delta) \,\d u \\
&\quad+2\si\int_s^t\<Z_{u_\delta}, \nn b(X^{\dd}_{u_\dd})(W_u-W_{u_\delta}) \>\,\d u\\
&\quad+2\si\int_s^t\int_{u_\delta}^u\<b(X_r)-b(X_{u_\delta}),\nn b(X^{\dd}_{u_\dd})(W_u-W_{u_\delta})\> \,\d r\,\d u\\
&\quad+2\si\int_s^t\int_{u_\delta}^u\<b(X_{u_\delta})-\bar b^{(\delta)}(X_{u_\delta}^\delta),\nn b(X^{\dd}_{u_\dd})(W_u-W_{u_\delta})\> \,\d r\,\d u\\
&=:|Z_s|^2+\sum_{i=1}^3\Gamma_i(s,t,\delta)+\sum_{i=1}^4\Gamma_{4i}(s,t,\delta),
\end{align*}
where in the second inequality we explored
\begin{align*}
Z_u=Z_{u_\delta}+\int_{u_\delta}^u\big(b(X_r)-\bar b^{(\delta)}(X_{u_\delta}^\delta)\big)\,\d r~\mbox{ and } ~X_u^\delta-X^{\dd}_{u_\dd}=\bar b^{(\delta)}(X_{u_\delta}^\delta)(u-u_\delta)+\si(W_u-W_{u_\delta}).
\end{align*}
By exploiting  $({\bf A}_0)$ and \eqref{EE11-}, it follows readily that for any $x,y\in\R^d,$
\begin{align*}
\<x-y,b(x)-b(y)\>\le L_{0}(1+2(R^*)^{\ell_0})|x-y|^2.
\end{align*}
This further implies that
\begin{align*}
 \E\Gamma_{1}(s,t,\delta)\le  L_{0}(1+2(R^*)^{\ell_0})\int_s^t \E|Z_u|^2\,\d u.
\end{align*}

To quantify  the remaining terms, it is necessary  to show that $(X_{n\delta}^\delta)_{n\ge0}$, determined by \eqref{RT3}, possesses a uniform moment estimate. Note from \eqref{RT3} that
\begin{align*}
|X^\dd_{(n+1)\delta}|^2
&=|X^\dd_{n\delta}|^2+2\<X^\dd_{n\delta},\bar b^{(\delta)}(X^\dd_{n\delta})\>\delta+|\bar b^{(\delta)}(X^\dd_{n\delta})|^2\delta^2\\
&\quad+2\si\<X^\dd_{n\delta}+\bar b^{(\delta)}(X^\dd_{n\delta})\delta,\triangle W_{n\delta}\>+\si^2|\triangle W_{n\delta}|^2.
\end{align*}
By means of \eqref{EE11-}, there exists a constant $c_1>0$ such that for any $x\in\R^d $ and $\delta\in(0,1],$
 \begin{equation}\label{DD7}
 \begin{split}
 |x+\bar b^{(\delta)}(x)\delta|^2&=|x|^2+2\<x,\bar b^{(\delta)}(x)\>\delta+|\bar b^{(\delta)}(x)|^2\delta^2\\
 &\le |x|^2-\frac{ L_2(1+|x|^{\ell_0})|x|^2\delta}{(1+\delta|x|^{2\ell_0})^{\frac{1}{2}}}+2|b({\bf0})|\cdot|x|\delta+c_1\delta
 \\
 &\quad-\big( L_2(1+\delta|x|^{2\ell_0})^{\frac{1}{2}}-2L_0^2(1+|x|^{\ell_0})\delta\big)\times\frac{(1+|x|^{\ell_0})\delta|x|^2}{ 1+\delta|x|^{2\ell_0} }.
 \end{split}
 \end{equation}
 Furthermore, via  $\ss{a+b}\ge(\ss a+\ss b)/{\ss2},a,b\ge0,$ it follows that for any  $\delta\in(0,L^2_2/(32L_0^4)]$ and
 $x\in\R^d$ with $|x|\ge1$,
 \begin{align*}
 L_2(1+\delta|x|^{2\ell_0})^{\frac{1}{2}}-2L_0^2(1+|x|^{\ell_0})\delta&\ge \frac{L_2}{\ss2}\big(1+\ss\delta|x|^{\ell_0}\big)-2L_0^2(1+|x|^{\ell_0})\delta\\
 &\ge \frac{L_2}{\ss2}+\Big(\frac{L_2|x|^{\ell_0}}{\ss2(1+|x|^{\ell_0})}  -2L_0^2\ss\delta\Big)(1+|x|^{\ell_0})\ss\delta\\
 &\ge\frac{L_2}{\ss2}+ \Big(\frac{L_2 }{2\ss2 }  -2L_0^2\ss\delta\Big)(1+|x|^{\ell_0})\ss\delta>0,
 \end{align*}
 in which in the third inequality we utilized the fact that $[0,\8)\ni r\mapsto r/(1+r)$ is increasing.
 Thus, we deduce from \eqref{DD7} that for some constants $c_2,c_3>0$ such that for any $x\in\R^d$ and $\delta\in(0,\delta_3^\star],$
 \begin{equation*}
 \begin{split}
 |x+\bar b^{(\delta)}(x)\delta|^2
 &\le |x|^2-\frac{ L_2 |x|^{\ell_0+2}  \delta}{(1+\delta|x|^{2\ell_0})^{\frac{1}{2}}}+|b({\bf0})|\cdot|x|\delta+c_2\delta\\
 &\le \big(1-  L_2   \delta/(2\ss2)\big)|x|^2  +c_3\delta.
 \end{split}
 \end{equation*}
 where in the second inequality we exploited the fact that $[0,\8)\ni r\mapsto r/(1+\delta r^2)^{\frac{1}{2}}$ is non-decreasing. Subsequently, we deduce that
 \begin{align*}
|X_{(n+1)\delta}^\delta|^2
&\le\big(1-  L_2   \delta/(2\ss2)\big)|X_{n\delta}^\delta|^2+c_3\delta+2\si\<X_{n\delta}^\delta+b^{(\delta)}(X_{n\delta}^\delta)\delta,\triangle W_{n\delta}\>+\si^2|\triangle W_{n\delta}|^2.
\end{align*}
Whence,  by following the proof of   Lemma \ref{lemma3}, for $X_0^\delta\in L^{p}(\OO\to\R^d,\mathscr F_0,\P), p>0,$ there exists a constant  $c_4=c_4(p)>0$   such that for any $\delta\in(0,\delta^\star_3] $ and $n\ge0,$
\begin{align}\label{EWQ}
 \E|X_{n\dd}^\dd|^p\le c_4\big(1+d^{\ff{p}2}\big)+ \E|X_0^\delta|^{p}.
\end{align}
From \eqref{WQ}, we obviously have
\begin{align*}
X_t^\dd-X_{t_\dd}^\dd=\bar b^{(\dd)}(X_{t_\dd}^\dd)(t-t_\delta)+W_t-W_{t_\dd}.
\end{align*}
Subsequently,
by employing
the $C_r$-inequality, \eqref{^1}    as well as
the stationary increment of $(W_t)_{t\ge0}$, for  $X_0^\delta\in L^{p }(\OO\to\R^d,\mathscr F_0,\P), p>0$,  we deduce from \eqref{EWQ} that there exist  constants $c_5=c_5(p),c_6=c_6(p)>0$ such that
\begin{equation}\label{EE4-1}
\begin{aligned}
\E|X_{t_\dd}^\dd-X_t^\dd|^{p} &\le  2^{(p-1)^+} \bigg(\delta^{p}\E|\bar b^{(\dd)}(X_{t_\dd}^\dd)|^{p}+\ff{1}{2^{p/2}}\Big(\ff{(2\lceil p\rceil)!}{\lceil p\rceil !}\Big)^{\frac{p}{2\lceil p\rceil}}(d\delta)^{\ff{p}{2}}\bigg)\\
 &\le c_5\dd^{ \ff{p}2}\big(1+d^{\frac{ p}{2}}+\E|X_{t_\dd}^\dd|^{p}\big)\\
 &\le c_6\dd^{ \ff{p}2}\big(1+d^{\frac{ p}{2}}+\E|X_0^\dd|^{p}\big),
\end{aligned}
\end{equation}
where in the second inequality we employed that
\begin{align*}
|\bar b^{(\delta)}(x)|\le |b({\bf0})|+\ss 2L_0\delta^{-\frac{1}{2}}|x|,\quad x\in\R^d.
\end{align*}
Notice from $({\bf A}_5)$  that there exists a constant $c_7>0$ such that
 \begin{align*}
 |b(x)-b(y)-\nn b(x)(x-y)|\le c_7\big(1+|x|^{\ell_0^\star}+|y|^{\ell_0^\star}\big)|x-y|^2,\quad x,y\in\R^d.
\end{align*}
Whereafter,
 we  deduce from   \eqref{EE4-1} and  Lemma \ref{lem3}  that for some constant $c_8>0,$
\begin{align*}
 \E\Gamma_{2}(s,t,\delta)+\E  \Gamma_{3}(s,t,\delta)+ &\E\Gamma_{41}(s,t,\delta)\le 3\int_s^t\E|Z_u|^2\,\d u+  c_8\big(
d^{\ell_{\star\star}}
 + \E|X_0^\delta|^{p_\star}\big)(t-s)\delta^2,
\end{align*}
 where in the inequality we also utilized that for some constant $c_9>0,$
 \begin{align*}
|\nn b(x)\bar b^{(\delta)}(x)|\le c_9\big(1+|x|^{1+ 2\ell_0}\big),\quad x\in\R^d,
\end{align*}
by taking $({\bf A}_0)$ into account.
Concerning  $\Gamma_{42}$ and $\Gamma_{44}$, it is trivial to see that $\E\Gamma_{42}(s,t,\delta)=\E\Gamma_{44}(s,t,\delta)=0$ due to  the fact that $W_u-W_{u_\delta}$ is independent of $\mathscr F_{u_\delta}$.
Finally, notice that $({\bf H}_4)$ is satisfied by \eqref{EE11-} and $({\bf A}_0)$. Therefore, via  H\"older's inequality, along with $X_0\in L^{4(\ell_0+1)}(\OO\to\R^d,\mathscr F_0,\P)$ and $X_0^\delta\in L^{p_\star}(\OO\to\R^d,\mathscr F_0,\P)$,
 we obtain from \eqref{WW2}   and \eqref{EWQ} that for some constant $c_{10}>0,$
 \begin{align*}
\E\Gamma_{43}(s,t,\delta)
&\le2|\si|\int_s^t\int_{u_\delta}^u\big(\E|b(X_r)-b(X_{u_\delta})|^2\big)^{\frac{1}{2}}\big(\E\|\nn b(X^{\dd}_{u_\dd})\|_{\rm op}^2\big)^{\frac{1}{2}}\big(\E|W_{u-u_\delta}|^2\big)^{\frac{1}{2}} \,\d r\,\d u\\
&\le c_{10} \big(1+{ d^{3\ell_0/2+1} }
+\E|X_0|^{4(1+\ell_0)}+\E|X_0^\delta|^{p_\star}\big)\delta^2(t-s),
\end{align*}
 where in the second inequality we also made use of $({\bf A}_0)$ and fact
 for any $q\ge2$, there is a constant  $c_{11}=c_{11}(q)>0$ such that
\begin{align*}
\E|X_s-X_{n\delta}|^q\le c_{11}\big(1+
  d^{\frac{1}{2}q(1+\ell_0)}+\E|X_0|^{q(\ell_0+1)}\big) \delta^{\frac{q}{2}}.
\end{align*}

At length,  by summing up the analysis above, we conclude that for some constant $c_{12}>0,$
\begin{align*}
\E|Z_t|^2\le \E|Z_s|^2+c_{12}\big(
d^{\ell_{\star\star}}
 +\E|X_0|^{4(1+\ell_0)}+\E|X_0^\delta|^{p_\star}\big)\delta^2(t-s)+c_{12}\int_s^t \E|Z_u|^2\,\d u .
\end{align*}
Whence, the assertion \eqref{EE4} follows from Gronwall's inequality and $|t-s|\le1$.
\end{proof}

Based on Lemma \ref{lemma4}, the proof of Theorem \ref{cor1-1} can be manipulated.
\begin{proof}[Proof of Theorem \ref{cor1-1}]
Obviously, \eqref{WE} is available once \eqref{EW**} is established and subsequently we take $\mu=\pi_\8^{(\delta)}$ in \eqref{EW**} followed by  sending $n\to\8$.

Under $({\bf A}_0)$ and \eqref{EE11-},  \cite[Theorem 1]{PM} implies that  there exist constants $C_1,\lambda_*,\si_0>0$ such that for any $\mu,\nu\in\mathscr P_2(\R^d)$, $t\ge0,$ and the noise intensity $\si$ satisfying $|\si|\ge\si_0,$
\begin{align}\label{EE5}
\mathbb W_2(\mu P_t,\nu P_t)\le C_1\e^{-\lambda_*t}\mathbb W_2(\mu  ,\nu  ),
\end{align}
where  $ \mu P_t $ stands for  the law of $ X_t $ with $\mathscr L_{X_0}=\mu.$
By the triangle inequality and the invariance of $\pi_\8$,
we obtain readily  that  for any   $n\ge0,$ and $\mu\in \mathscr P_{p^\star}(\R^d),$
\begin{align*}
	\W_2\big(\mu P^{(\dd)}_{n\dd},\pi_\8\big)\le &\W_2\big(\mu P^{(\dd)}_{n\dd},\mu  P_{n\dd}\big)+\W_2\big(\mu P_{n\dd},\pi_\8 P_{n\dd}\big).
	\end{align*}
 This, together with \eqref{EE5},  yields the desired assertion \eqref{EW**}
as long as  there exists a   constant  $C_2>0$  such that for any $n\ge0$, $\delta\in(0,\delta_3^\star],$ and $\mu\in \mathscr P_{p_\star}(\R^d),$
\begin{align}\label{RT}
\W_2\big(\mu P^{(\dd)}_{n\dd},\mu  P_{n\dd}\big)\le C_2\big(d^{\ell_{\star\star}}+\mu(|\cdot|^{p_\star})\big)\delta.
\end{align}

Below, we shall stipulate $\delta\in(0,\delta_3^\star]$.
Once $m:=\lfloor n/\lfloor \dd^{-1}\rfloor\rfloor=0$, we obviously have $n\dd<1$. For this case,
 \eqref{RT} follows  by applying Lemma \ref{lemma4} with the same initial value $X_0=X_0^\delta$. In the following analysis, we focus on the case $m\ge1.$
By invoking  Lemma \ref{lemma4} (with the same initial value at the time $m\lfloor \dd^{-1}\rfloor\dd$) and
 the triangle inequality, we infer that for some constant $C_3>0,$
\begin{equation}\label{RT1}
	\begin{aligned}
	\W_2\big(\mu P^{(\dd)}_{n\dd},\mu  P_{n\dd}\big)\le & \W_2\big(\mu P^{(\dd)}_{n\dd}, \big(\mu P^{(\dd)}_{m\lfloor \dd^{-1}\rfloor\dd}\big)P_{(n-m\lfloor \dd^{-1}\rfloor)\dd}\big)\\
&+\W_2\big(\big(\mu P^{(\dd)}_{m\lfloor \dd^{-1}\rfloor\dd}\big)P_{(n-m\lfloor \dd^{-1}\rfloor)\dd},\mu P_{n\dd} \big)\\
	\le &C_3\big( d^{\ell_{\star\star}}+\mu(|\cdot|^{p_\star})\big)\delta+\W_2\big(\big(\mu P^{(\dd)}_{m\lfloor \dd^{-1}\rfloor\dd}\big)P_{(n-m\lfloor \dd^{-1}\rfloor)\dd},\mu P_{n\dd} \big),
\end{aligned}
\end{equation}
where we utilized   $0\le (n-m\lfloor \dd^{-1}\rfloor)\delta\le 1 $ due to $\lfloor \dd^{-1}\rfloor\dd\le 1. $
Furthermore, it is easy to see that
\begin{align*}
&\W_2\big(\big(\mu P^{(\dd)}_{m\lfloor \dd^{-1}\rfloor\dd}\big)P_{(n-m\lfloor \dd^{-1}\rfloor)\dd},\mu P_{n\dd} \big)\\
&\le \sum_{i=0}^{m-1}\mathbb W_2\big(\big(\mu P^{(\dd)}_{i\lfloor \dd^{-1}\rfloor\dd}\big)P_{ (n -i \lfloor \delta^{-1}\rfloor) \delta},\big(\mu P^{(\dd)}_{(i+1)\lfloor \dd^{-1}\rfloor\dd}\big)P_{(n- (i+1) \lfloor \delta^{-1}\rfloor) \delta}\big)\\
&=\sum_{i=0}^{m-1}\mathbb W_2\big(\big(\mu P^{(\dd)}_{i\lfloor \dd^{-1}\rfloor\dd}\big)P_{\lfloor \delta^{-1}\rfloor \delta}P_{ (n -(i+1) \lfloor \delta^{-1}\rfloor) \delta},\big(\mu P^{(\dd)}_{(i+1)\lfloor \dd^{-1}\rfloor\dd}\big)P_{(n- (i+1) \lfloor \delta^{-1}\rfloor) \delta}\big)\\
&\le C_1\sum_{i=0}^{m-1}\e^{-\ll_*(n- (i+1) \lfloor \delta^{-1}\rfloor) \delta} \W_2\big(\big(\mu P^{(\dd)}_{i\lfloor \dd^{-1}\rfloor\dd}\big)P_{ \lfloor \delta^{-1}\rfloor \delta}, \big(\mu P^{(\dd)}_{i\lfloor \dd^{-1}\rfloor\dd}\big) P^{(\dd)}_{ \lfloor \dd^{-1}\rfloor\dd} \big),
\end{align*}
where in the second identity we used the semigroup property, and the last display is valid by taking advantage of \eqref{EE5}. Next, by virtue of  $ \lfloor \delta^{-1}\rfloor \delta\le1$,
 applying Lemma \ref{lemma4} once more yields that for some constant $C_4>0$,
  \begin{align*}
\mathbb W_2\big(\big(\mu P^{(\dd)}_{i\lfloor \dd^{-1}\rfloor\dd}\big)P_{ \lfloor \delta^{-1}\rfloor \delta}, \big(\mu P^{(\dd)}_{i\lfloor \dd^{-1}\rfloor\dd}\big) P^{(\dd)}_{ \lfloor \dd^{-1}\rfloor\dd} \big)\le C_4\big(d^{\ell_{\star\star}}+\mu(|\cdot|^{p_\star})\big)\delta,\quad i=0,\cdots,m-1.
\end{align*}
Additionally, in the light of  $1/2\le\lfloor \delta^{-1}\rfloor \delta\le 1$ for $\delta\in(0,1/2]$, we derive that
\begin{align*}
\e^{-\ll_* n\dd}\sum_{i=1}^{m}\e^{\ll_* i\lfloor \delta^{-1}\rfloor \delta}
&\le \e^{-\ll_* n\dd} (\ll_*\lfloor \delta^{-1}\rfloor\delta)^{-1}\e^{\lambda_*  (m+1)  \lfloor \delta^{-1}\rfloor\delta }\le  \e^{\ll_*}(\ll_*/2)^{-1} ,
\end{align*}
where  in the first inequality we utilized  the basic inequality: $1/(\e^r-1)\le 1/r$ for $r>0$, and in the second inequality we took advantage of the fact that
 $0\le(n-m\lfloor \dd^{-1}\rfloor)\dd\le 1$ owing to  $\lfloor \delta^{-1}\rfloor \delta\le1$.
As a consequence, we conclude that there is a  dimension-free  constant $C_5>0$ such that for any $n\ge0$,
\begin{align}\label{RT2}
\W_2\big(\big(\mu P^{(\dd)}_{m\lfloor \dd^{-1}\rfloor\dd}\big)P_{(n-m\lfloor \dd^{-1}\rfloor)\dd},\mu P_{n\dd} \big)\le C_5\big( d^{\ell_{\star\star}}+\mu(|\cdot|^{p_\star})\big).
\end{align}
Afterward, \eqref{RT} follows by plugging \eqref{RT2} back into \eqref{RT1}.
\end{proof}

\section{Criteria on  Strong law of large numbers and proof of Theorem \ref{LLN0}}\label{sec5}

The following theorem establishes the strong LLN for \eqref{E2}, demonstrating that the additive functional (i.e., the time average) almost surely converges to the spatial average for $\pi_\infty$.
\begin{theorem}\label{LLN}
Assume   $({\bf A}_0)$, $({\bf A}_1)$ and  $({\bf H}_1)$-$({\bf H}_5)$, and suppose further $({\bf A}_5)$ once $\beta $ involved  in $({\bf H}_5)$
satisfies  $\beta> 1/2.$
Then, for any $f\in\mathscr C_\rho$ $($given in \eqref{f1}$)$, $\vv\in(0,1/2)$, and $x\in\R^d,$ there exist constants $C_0=C_0(x,\rho,\vv,\|f\|_\rho),\si_0>0$ and a random time $N_{\vv,\dd}=N_{\vv,\dd}(x,\gamma,d)\ge 1$  such that for   $\dd\in(0,\dd_2^{\star\star}]$, $n\ge N_{\vv,\dd}$, and the noise intensity $\si$ satisfying  $|\si|\ge\si_0,$
\begin{equation}\label{EE1}
\bigg|\frac{1}{n}\sum_{k=0}^{n-1} f(X_{k\dd}^{\dd,x})-\pi_\8(f) \bigg|\le C_0 \big(n^{-{1}/{2}+\vv}\dd^{-1/2}+\dd^{1\wedge \bb}{  d^{\frac{1}{2} (\ell_\star+  \rho) } }\big),\quad a.s.,
\end{equation}
where
$(X^{\dd,x}_{n\dd})_{n\ge 0}$  is determined by \eqref{E2},    $\pi_\8$ represents the IPM of $(X_{t})_{t\ge 0}$ solving \eqref{E1},  and  $\ell_\star$ was defined in \eqref{DD5}.
Moreover, for any $q>0,$ there is a constant $C_q^*>0$ such that
\begin{align}\label{7W}
	\E N_{\vv,\dd}^q\le C^*_q\big(1+|x|^{2(1+\rho)} +d^{1+\rho}\big)^{\frac{q+2}{2\vv}}.
\end{align}
\end{theorem}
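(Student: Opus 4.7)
}
The strategy is to split the error into a deterministic bias and a random fluctuation,
\[
\frac{1}{n}\sum_{k=0}^{n-1}f(X_{k\dd}^{\dd,x})-\pi_\8(f) \;=\; B_n(x) \;+\; \frac{1}{n}M_n(x),
\]
where $B_n(x):=\tfrac{1}{n}\sum_{k=0}^{n-1}(\E f(X_{k\dd}^{\dd,x})-\pi_\8(f))$ and $M_n(x):=\sum_{k=0}^{n-1}(f(X_{k\dd}^{\dd,x})-\E f(X_{k\dd}^{\dd,x}))$. The bias will be controlled by the non-asymptotic $\W_2$-bound of Theorem \ref{IPM}; the fluctuation will be handled by a martingale decomposition, an $L^p$-Rosenthal/Burkholder bound, and a Borel--Cantelli step to produce the random time $N_{\vv,\dd}$.

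\textbf{Step 1 (bias).} For $f\in\mathscr C_\rho$, Cauchy--Schwarz applied with an optimal $\W_2$-coupling between $\dd_xP_{k\dd}^{(\dd)}$ and $\pi_\8$ gives
\[
|\E f(X_{k\dd}^{\dd,x})-\pi_\8(f)|\le \|f\|_\rho\,\W_2\bigl(\dd_xP_{k\dd}^{(\dd)},\pi_\8\bigr)\cdot \bigl(\E(1+|X_{k\dd}^{\dd,x}|^{2\rho}+|\cdot|^{2\rho})\bigr)^{1/2}.
\]
Lemma \ref{lemma3} bounds the $2\rho$-moment of $X_{k\dd}^{\dd,x}$ uniformly in $k$ by $C(1+|x|^{2\rho}+d^\rho)$, and Lemma \ref{lemma1} gives the corresponding bound under $\pi_\8$. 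Coupled with Theorem \ref{IPM} this yields
\[
|\E f(X_{k\dd}^{\dd,x})-\pi_\8(f)|\le C\bigl(\e^{-\frac12(\ll_1\wedge\ll_2) k\dd}\,\W_2(\dd_x,\pi_\8)+\dd^{1\wedge\bb}d^{\ell_\star/2}\bigr)\bigl(1+|x|^\rho+d^{\rho/2}\bigr),
\]
so after averaging
\[
|B_n(x)|\;\le\;\frac{C(|x|,\rho,d)}{n\dd}\;+\;C\|f\|_\rho\,\dd^{1\wedge\bb}\,d^{(\ell_\star+\rho)/2},
\]
and the first term is absorbed into $C_0 n^{-1/2+\vv}\dd^{-1/2}$ once $n\ge N_{\vv,\dd}$.

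\textbf{Step 2 (fluctuation, $L^p$-bound).} Decompose $M_n(x)$ as an $(\mathscr F_{j\dd})$-martingale,
\[
M_n(x)=\sum_{j=1}^{n}D_j^{(n)},\qquad D_j^{(n)}:=\sum_{k=j-1}^{n-1}\Bigl(\E\bigl[f(X_{k\dd}^{\dd,x})\big|\mathscr F_{j\dd}\bigr]-\E\bigl[f(X_{k\dd}^{\dd,x})\big|\mathscr F_{(j-1)\dd}\bigr]\Bigr).
\]
By the Markov property, the $k$-th summand equals a difference of expectations of $f$ against the transition kernel started from $X_{(j-1)\dd}^{\dd,x}$ and $X_{j\dd}^{\dd,x}$ after $k-j$ further steps. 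Theorem \ref{thm} together with the $\mathscr C_\rho$-regularity of $f$ bounds this difference by $C\|f\|_\rho\,\e^{-\frac12(\ll_1\wedge\ll_2)(k-j)\dd}\,\Xi_j$ with $\Xi_j$ a polynomial weight of order $\rho$ in $|X_{(j-1)\dd}^{\dd,x}|$ and $|X_{j\dd}^{\dd,x}|$. Summing the geometric series in $k$ produces $|D_j^{(n)}|\le C\|f\|_\rho\,\Xi_j/\dd$. Rosenthal's inequality for martingales, together with the uniform-in-$j$ moment control $\E\Xi_j^p\le C_p(1+|x|^{2(1+\rho)}+d^{1+\rho})^{p/2}$ supplied by Lemma \ref{lemma3}, yields for every even $p\ge 2$
\[
\E|M_n(x)|^p\;\le\;C_p\,n^{p/2}\,\bigl(1+|x|^{2(1+\rho)}+d^{1+\rho}\bigr)^{p/2}.
\]

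\textbf{Step 3 (Borel--Cantelli and $N_{\vv,\dd}$).} Markov's inequality gives
\[
\Pp\bigl(|M_n(x)|/n> n^{-1/2+\vv}\bigr)\;\le\; C_p\bigl(1+|x|^{2(1+\rho)}+d^{1+\rho}\bigr)^{p/2}\,n^{-\vv p}.
\]
Choosing $p=(q+2)/\vv$ makes $\sum_n n^{q-1}\cdot n^{-\vv p}$ convergent, so Borel--Cantelli produces a random $N_{\vv,\dd}\ge1$ beyond which $|M_n(x)|/n\le n^{-1/2+\vv}$; combined with Step 1 this gives \eqref{EE1}. The bound \eqref{7W} follows from $\E N_{\vv,\dd}^q\le 1+\sum_{n\ge 1}((n+1)^q-n^q)\,\Pp(N_{\vv,\dd}\ge n)$ and the preceding tail estimate at $p=(q+2)/\vv$, which gives exactly the exponent $(q+2)/(2\vv)$ on $(1+|x|^{2(1+\rho)}+d^{1+\rho})$.

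\textbf{Main obstacle.} The heart of the argument is Step 2: one needs $L^p$-moment control of the martingale increments $D_j^{(n)}$ with \emph{explicit} dimension dependency. This forces a delicate combination of (i) the $L^2$-Wasserstein contraction of Theorem \ref{thm}, (ii) the polynomial-growth structure of $\mathscr C_\rho$, and (iii) the uniform-in-time polynomial moment bounds of Lemma \ref{lemma3}, while tracking the geometric factor $(1-\e^{-\lambda\dd})^{-1}\sim \dd^{-1}$ carefully so that the final random fluctuation scales as $n^{-1/2+\vv}\dd^{-1/2}$ rather than something worse; any looseness here would inflate both the dimension exponent in \eqref{EE1} and the polynomial base in \eqref{7W}.
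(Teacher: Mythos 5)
Your overall architecture -- split the error into an invariant-measure discrepancy plus a centered additive functional, obtain an $L^{2q}$ moment bound via the $\mathbb W_2$-contraction, and finish with Chebyshev and Borel--Cantelli -- matches the paper. The paper, however, centers at the \emph{numerical} IPM $\pi_\8^{(\delta)}$: it writes $\big|\tfrac1n\sum_k f(X_{k\dd}^{\dd,x})-\pi_\8(f)\big|\le|\pi_\8(f)-\pi_\8^{(\dd)}(f)|+|S_n^{\dd,x}|$ with $S_n^{\dd,x}=\tfrac1n\sum_k f(X_{k\dd}^{\dd,x})-\pi_\8^{(\dd)}(f)$, bounds the first term once and for all by Theorem~\ref{IPM}, and then establishes $\E|S_n^{\dd,x}|^{2q}\le C(1+|x|^{2(1+\rho)}+d^{1+\rho})^q(n\dd)^{-q}$ in~\eqref{W3} by a direct moment expansion using the Markov property and the decay~\eqref{W4}, with no explicit martingale decomposition. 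That estimate simultaneously encodes both the $O(1/(n\dd))$ transient bias and the $O((n\dd)^{-1/2})$ fluctuation, which is precisely why the Chebyshev step yields $\P(|S_n^{\dd,x}|>n^{-1/2+\vv}\dd^{-1/2})\le C n^{-2q\vv}$ with a prefactor independent of $\dd$, and hence a $\dd$-uniform bound~\eqref{7W} on $N_{\vv,\dd}$.

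There is a genuine gap in your Step~2 that is not merely cosmetic. You assert $|D_j^{(n)}|\le C\|f\|_\rho\,\Xi_j/\dd$ (from summing the geometric series), yet then state $\E|M_n(x)|^p\le C_p\,n^{p/2}(1+|x|^{2(1+\rho)}+d^{1+\rho})^{p/2}$ with no $\dd$-dependence at all. These two claims are inconsistent: Rosenthal with the stated increment bound would give $\E|M_n|^p\lesssim n^{p/2}\dd^{-p}(\cdots)^{p/2}$, and feeding that into Step~3 against the threshold $n^{-1/2+\vv}\dd^{-1/2}$ produces a tail of order $n^{-p\vv}\dd^{-p/2}$, which is not summable uniformly in $\dd$ and would wreck the $\dd$-free moment bound~\eqref{7W}. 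The correct accounting is that each summand of $D_j^{(n)}$ is (by the Markov property and Theorem~\ref{thm}) of size $\e^{-\ll(k-j)\dd}\,|X_{j\dd}^{\dd,x}-X_{(j-1)\dd}^{\dd,x}|$ up to polynomial weights, and the one-step increment is $O(\ss\dd)$ in $L^p$ (Brownian increment dominates). Summing the geometric series then yields $|D_j^{(n)}|\lesssim\dd^{-1}\cdot\ss\dd\cdot\Xi_j=\dd^{-1/2}\Xi_j$, and Rosenthal gives $\E|M_n|^p\lesssim(n/\dd)^{p/2}(\cdots)^{p/2}$, equivalently $\E|M_n/n|^p\lesssim(n\dd)^{-p/2}(\cdots)^{p/2}$, matching the paper's~\eqref{W3}. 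Only then does the $\dd^{-p/2}$ from the moments exactly cancel the $\dd^{p/2}$ from raising the threshold to the $-p$ power, leaving the $\dd$-uniform tail $n^{-p\vv}$.

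Relatedly, in Step~1 the bias term $C/(n\dd)$ you obtain by centering at $\pi_\8$ is \emph{not} automatically $\le C_0 n^{-1/2+\vv}\dd^{-1/2}$ for all $n\ge1$: the inequality $1/(n\dd)\le n^{-1/2+\vv}\dd^{-1/2}$ is equivalent to $n\ge\dd^{-1/(1+2\vv)}$. Enforcing that deterministic constraint on $N_{\vv,\dd}$ would again contradict the $\dd$-uniform bound~\eqref{7W}. The fix is either to sharpen the bias bound to $C\min\{1,(n\dd)^{-1}\}\le C(n\dd)^{-1/2}\le Cn^{-1/2+\vv}\dd^{-1/2}$ (valid for all $n\ge1$, $\dd\le1$), or, more cleanly, to do what the paper does and fold the bias into the moment bound on $S_n^{\dd,x}$ by centering at $\pi_\8^{(\dd)}$ from the outset.
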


\begin{proof}
In the following analysis, we stipulate $\delta\in(0,\dd_2^{\star\star}]$.
Set for  $n\ge1$ and $x\in\R^d$,
$$S^{\dd,x}_n:=\frac{1}{n}\sum_{k=0}^{n-1}   f(X_{k\dd}^{\dd,x})-\pi_\8^{(\dd)}(f),\quad f\in\mathscr C_\rho.$$ Under (${\bf H}_1$) and (${\bf H}_{2}$),  by  Theorem \ref{thm} and Lemma \ref{lemma3},
$(X_{n\delta}^{\delta,x})_{n\ge0}$ admits a unique  IPM $\pi^{(\dd)}_\8$. Obviously, we have
\begin{align*}
\bigg|\frac{1}{n}\sum_{k=0}^{n-1}   f(X_{k\dd}^{\dd,x})-\pi_\8(f)\bigg|\le |\pi_\8(f)-\pi^{(\dd)}_\8(f)|+|S^{\dd,x}_n|,
\end{align*}
where $\pi_\8$ is the IPM of $(X_t)_{t\ge0}$.
Consequently, the assertion \eqref{EE1} is verified  provided that there exists a constant $C_0>0$ such that
\begin{align}\label{1W}
 \big|\pi_\8(f)-\pi^{(\dd)}_\8(f)\big|\le C_0\delta^{1\wedge\beta}d^{\frac{1}{2} (\ell_\star +\rho) } ,
\end{align}
and that, for  $\vv\in(0,1/2)$, there is a random variable $N_{\vv,\dd}=N_{\vv,\dd}(x,\rho,d)$ so that for  all $n\ge N_{\vv,\dd}$,
\begin{align}\label{W1}
|S^{\dd, x}_{n}|\le n^{-\frac{1}{2}+\vv}\dd^{-\frac{1}{2}},\quad a.s.
\end{align}

Via H\"older's inequality and Minkowski's inequality, we find that
for    $f\in\mathscr C_\rho $ and  $\pi\in\mathscr C(\pi_\8, \pi^{(\dd)}_\8)$,
\begin{align*}
	\big|\pi_\8(f)-\pi^{(\dd)}_\8(f)\big|&\le\int_{\R^d\times\R^d}|f(x)-f(y)|\pi(\d x,\d y)\\
&\le\|f\|_\rho\int_{\R^d\times\R^d}| x -y |(1+|x|^\rho+|y|^\rho)\pi(\d x,\d y)\\
&\le \|f\|_\rho\bigg(\int_{\R^d\times\R^d}| x -y |^2\pi(\d x,\d y)\bigg)^{\frac{1}{2}}\big(1+\pi_\8(|\cdot|^{2\rho})^{\frac{1}{2}}+\pi^{(\dd)}_\8(|\cdot|^{2\rho})^{\frac{1}{2}}\big).
\end{align*}
Thus, taking infimum with respect to $\pi$ on both sides yields that
\begin{align}\label{2W}
\big|\pi_\8(f)-\pi^{(\dd)}_\8(f)\big|\le \|f\|_\rho\big(1+\pi_\8(|\cdot|^{2\rho})^{\frac{1}{2}}+\pi^{(\dd)}_\8(|\cdot|^{2\rho})^{\frac{1}{2}}\big)\mathbb W_2(\pi^{(\dd)}_\8,\pi_\8).
\end{align}
Next, under (${\bf H}_1$), (${\bf H}_2$) and (${\bf H}_4$), Lemmas \ref{lemma1} and \ref{lemma3} are applicable so that  there exists a constant $C_2>0$,
\begin{align*}
 \pi_\8(|\cdot|^{2\rho})^{\frac{1}{2}}+\pi^{(\dd)}_\8(|\cdot|^{2\rho})^{\frac{1}{2}}\le C_2\big(1+d^{\frac{1}{2} \rho }\big).
\end{align*}
Consequently, \eqref{1W} follows from Theorem \ref{IPM}.

Once we can claim   that, for integer $q\ge1$, there exists  a constant    $C_2=C_2(q)>0 $ such that
\begin{equation}\label{W3}
\mathbb{E}\big|S^{\dd, x}_n\big|^{2q}\le C_2  (1+|x|^{2(1+\rho)}+d^{1+\rho})^{q}(n\dd)^{-q},
\end{equation}
  Chebyshev's inequality and  H\"older's inequality imply that for some constant $C_3=C_3(\vv)>0$,
  \begin{align*}
\P\big(|S^{\dd,x}_{n}|>n^{-\frac{1}{2}+\vv}\dd^{-\frac{1}{2}}\big)
 &\le \dd^{\frac{1}{\vv}}n^{\frac{1}{\vv}-2}\big( \E|S^{\dd,x}_{n}|^{2\lceil1/\vv\rceil}\big)^{\ff{1/{\vv}}{\lceil1/\vv\rceil}}\\
  &\le C_3 (1+|x|^{2(1+\rho)}+d^{1+\rho})^{\frac{1}{\vv}}n^{-2}.
\end{align*}
 Accordingly, due to  $\sum_{n=1}^\8 n^{-2}  <\8$,  Borel-Cantelli's lemma yields that there exist a   random variable $N_{\vv,\dd}^*=N_{\vv,\dd}^*(x,\rho,d)>1$ such that for any $n\ge N_{\vv,\dd}^*$,
\begin{align*}
 |S_n^{\dd,x} |\le n^{-\ff12+\vv}\dd^{-\ff12}, \quad a.s.
\end{align*}
 Therefore,  \eqref{W1} is available by taking
\begin{align*}
N_{\vv,\dd}:=\min \big\{m\in \N: \big|S_n^{\dd,x}\big|\le n^{-\ff12+\vv}\dd^{-\ff12},~ \text{for any} ~n\ge m+1\big\}.
\end{align*}

 Owing to $N_{\vv,\dd}\le N^*_{\vv,\dd},$ we thus have $N_{\vv,\dd}<\8, a.s.$
 Moreover, in terms  of the definition of $N_{\vv,\dd}$ and Chebyshev's inequality, for any $q>0,$ it is easy to see that  there exists a constant $C_4=C_4(q)$ such that
 \begin{align*}
	\E N_{\vv,\dd}^q=\sum_{n=1}^\8\P(N_{\vv,\dd}=n)n^q
&\le \sum_{n=1}^\8 \P\big(|S^{\dd,x}_{n}|>n^{-\ff12+\vv}\dd^{-\ff12}\big) n^q\\
&\le C_4 \big(1+|x|^{2(1+\rho)} +d^{1+\rho}\big)^{\frac{q+2}{2\vv}}\sum_{n=1}^\8n^{-2}.
\end{align*}
As a result,  the assertion \eqref{7W} follows.

 It remains to prove \eqref{W3}.  By mimicking the strategy to derive \eqref{2W}, we deduce from Theorem \ref{thm} and Lemma \ref{lemma3} that for some constant  $C_5 >0$,
\begin{equation}\label{W4}
\begin{aligned}
\big|P^{(\dd)}_{n\dd}f(x)-\pi_\8^{(\dd)}(f)\big|&\le \|f\|_\rho\W_2\big(\dd_{x}P^{(\dd)}_{n\dd},\pi^{(\dd)}_\8\big)\Big(1+\big(\E|X_{n\dd}^{\dd,x}|^{2\rho}\big)^{\frac{1}{2}}+ \pi^{(\dd)}_\8(|\cdot|^{2\rho})^{\frac{1}{2}}\Big)\\
&\le\|f\|_\rho\big(|x|+\pi^{(\dd)}_\8(|\cdot|^2)^{\frac{1}{2}}\big)\Big(1+\big(\E|X_{n\dd}^{\dd,x}|^{2\rho}\big)^{\frac{1}{2}}+ \pi^{(\dd)}_\8(|\cdot|^{2\rho})^{\frac{1}{2}}\Big)\\
&\le C_5\|f\|_\rho\e^{-\ll n\dd}\big(1+|x|^{ 1+\rho }+d^{\frac{1}{2}(1+\rho) }\big),
\end{aligned}
\end{equation}
where in the second inequality Young's inequality was applied.
Furthermore, due to $f\in\mathscr C_\rho,$ it is easy to see that for any $x\in\R^d,$
\begin{align}\label{3W}
|f(x)|\le |f({\bf0})|+|f(x)-f({\bf0})|\le 2\big(|f({\bf0})|\vee\|f\|_\rho\big) (1+ |x|^{1+\rho}).
\end{align}
Next, for any integer $q\ge1$,
a direct calculation shows that for some constant $C_6=C_6(q)$,
\begin{align*}
 \Big(\E\big|S^{\dd,x}_{n}\big|^{2q}\Big)^{\frac{1}{q }}
 &\le  \ff{C_6}{n^2} \sum^{n}_{m=1}\sum^{1\vee(m-1)}_{k=1}
\Big( \E\big|\big(f(X^{\dd,x}_{(k-1)\dd})-\pi_\8^{(\dd)}(f)\big) \big(P_{(m-k)\delta}^{(\delta)}f(X^{\dd,x}_{(k-1)\dd})-\pi_\8^{(\dd)}(f)  \big)\big|^{q}\Big)^{\frac{1} { q }}.
\end{align*}
This, together with \eqref{W4} and \eqref{3W}, gives that for some constant $C_7>0,$
\begin{align*}
\Big(\E\big|S^{\dd,x}_{n\dd}\big|^{2q}\Big)^{\frac{1}{ q}} &\le  \ff{C_7}{n^2} \sum^{n}_{m=1}\sum^{1\vee(m-1)}_{k=1} \e^{-\ll (m-k)\dd}\Big(1+d^{1+\rho}
+\big(\E|X_{(k-1)\delta}^{\delta,x}|^{2q(1+\rho)}\big)^{\frac{1} { q}}\Big)\\
&\le \ff{C_8}{n\dd} \big(1+|x|^{2(1+\rho)}+d^{1+\rho}\big),
\end{align*}
where  we exploited Lemma \ref{lemma3} and the fact that
 $\sum^{n}_{m=1}\sum^{m-1}_{k=1} \e^{-\ll (m-k)\dd} \le n(\ll \dd)^{-1}  $  in the second inequality.  Subsequently, \eqref{W3} follows directly.
\end{proof}

At the end of this section, we finish the proof of Theorem \ref{LLN0}.

\begin{proof}[Proof of Theorem \ref{LLN0}]
In terms of Theorem \ref{LLN}, the proof of  Theorem \ref{LLN0}  can be done by noting that
all conditions required in Theorem \ref{LLN} have   been examined in the proof of Theorem \ref{cor}.
\end{proof}

\noindent {\bf Acknowledgements.}\,\,
We would like to   thank 
two referees for their  constructive
comments and suggestions, which have improved our manuscript considerably.
The research of Jianhai Bao is supported by the National Key R\&D Program of China (2022YFA1006004).

\end{document}